\DeclareMathAlphabet{\mathbrush}{T1}{pbsi}{xl}{n}
\newsavebox{\@PMG@fmbox} 
\newcommand{\fmpage}[2]{\begin{lrbox}{\@PMG@fmbox}\begin{minipage}{#1} #2 \end{minipage}\end{lrbox}\fbox{\usebox{\@PMG@fmbox}}}
\newcommand*{\norm}[1]{\lVert #1 \rVert}
\let\seq=\sequence
\newcommand*{\funl}{\ll}
\newcommand*{\nfunl}{\not\ll}
\newcommand*{\fung}{\gg}
\newcommand*{\nfung}{\not\gg}
\def\fung*{\fung^{*}}
\def\funl*{\vphantom{\funl}^{*}\funl}
\def\nfung*{\nfung^{*}}
\def\nfunl*{\vphantom{\nfunl}^{*}\nfunl}
\newcommand{\axiom}[2]{ \mathopen{\langle} \ifthenelse{\isempty{#1}}{\eset}{#1} \rightarrow #2 \mathclose{\rangle} }
\newcommand*{\lang}{\mathscr{L}}
\def\lang[#1]{\lang\left(#1\right)}
\newcommand*{\treeroot}[1][]{0_T}
\newcommand*{\treeRoot}[1]{\ifthenelse{\isempty{#1}}{\eset}{0_{T}}}
\newcommand*{\enumseq}[1]{#1^{\blacktriangleleft}}
\def\enumseq[#1]#2{#2\triangleleft #1}
\newcommand*{\copylen}[2][]{\mathbrush{l}^{#1}(#2)}
\newcommand*{\enumpred}[1]{{#1}^{\diamond}}
\newcommand*{\treemap}[2]{\vartheta^{#1}_{#2}}
\newcommand*{\nusm}{\zeta}
\newcommand*{\fastn}{\let\@PMG@parenarg\@PMG@undefined\let\@PMG@braketarg\@PMG@undefined\@fastnbody}
\newcommand*{\@fastnbody}[1]{\xi^{#1\ifdefined\@PMG@parenarg
, \@PMG@parenarg%
\fi%
}
\ifdefined\@PMG@braketarg
_{\@PMG@braketarg}%
\fi}
\def\@fastnbody(#1){\def\@PMG@parenarg{#1}\@fastnbody}
\def\@fastnbody[#1]{\def\@PMG@braketarg{#1}\@fastnbody}
\begin{document}

\title{Harrington's Solution to McLaughlin's Conjecture and Non-uniform Self-moduli\footnote{Preprint of an article submitted for consideration in \href{http://www.worldscinet.com/jml/}{Journal of Mathematical Logic} in 2010.  On acceptance copyright will be assigned to World Scientific Publishing Company}}
\author{Peter M. Gerdes\footnote{Partially Supported by NSF EMSW21-RTG-0739007 and EMSW21-RTG-0838506}}
\address{Department of Mathematics\\
University of Notre Dame du Lac\\
Notre Dame, Indiana 46556}

\maketitle

\begin{abstract}
	While much work has been done to characterize the Turing degrees computing members of various collections of fast growing functions, much less has been done to characterize the rate of growth necessary to compute particular degrees.  Prior work has shown that every degree computed by all sufficiently fast growing functions is uniformly computed by all sufficiently fast growing functions.  We show that the rate of growth sufficient for a function to uniformly compute a given Turing degree can be separated by an arbitrary number of jumps from the rate of growth that suffices for a function to non-uniformly compute the degree.  These results use the unpublished method Harrington developed to answer McLaughlin's conjecture so we begin the paper with a rigorous presentation of the approach Harrington sketched in his handwritten notes on the conjecture.  We also provide proofs for the important computability theoretic results Harrington noted were corollaries of this approach.  In particular we provide the first published proof of Harrington's result that there is an effectively given sequence of \( \pizn{1} \) singletons that are \( Low_\alpha \)  none of which is computable in the effective join of the \( \alpha \) jumps of the others for every \( \alpha \kleenel \wck \).
\end{abstract}

\keywords{Implicit Definability;\( \Pi^0_1 \) Classes;Fast Growing Functions;Turing Degrees}
\ccode{Mathematics Subject Classification 2010: 03D55, 03D60}

\section{Introduction}

\subsection{Remarks}

While this paper was drafted to convey a result of the author's the first half of this paper is devoted to the presentation of Harrington's results from \cite{mclaughlins-conjecture} as the technique he used to settle McLaughlin's conjecture is needed for the author's result and has never before appeared in print.  The author would like to make absolutely clear that these results are Harrington's alone, but as Harrington's notes are quite sparse and the way in which he anticipated filling in the details has faded with time, the details are, for good or ill, the author's own.  In addition to various assorted details the technical results in \ref{sec:ord-org} on nice ordinal notations and the modifications required to prove lemma \ref{lem:subgeneric-root} and corollary \ref{cor:subgeneric-star} true are of the author's devising and it is unclear what it any resemblance they might bear to Harrington's original conception of these proofs.  Once Harrington's method has been presented the second half of the paper will revert to a more standard style and provide a brief review of previously published literature on fast growing functions and Turing degrees followed by the author's own results in this area.

\subsection{Notation \& Background}

The notation we use is largely standard.   We use \( \pair{x}{y} \) to denote the integer code of the pair \( (x, y)  \), \( \TPlus_{i\in \omega} A_i \) for the set whose \( i \)-th column is \( A_i \), and \( \setcmp{C} \) to denote the compliment of \( C \).  

A string is a member of \( \wstrs \) and trees subsets of \( \wstrs \) closed under initial segments.  When we need to distinguish between strings and their integer codes we write \( \gcode{\sigma} \) for the code of  \( \sigma  \).   We use \( \sigma \incompat \tau \) and \( \sigma \compat \tau \) to denote that \( \sigma, \tau \) are incompatible and compatible respectively and write \( \sigma\concat\tau \) to denote the concatenation of the two strings.   \( \lh{\sigma} \) gives the length of \( \sigma \) and \( \strpred{\sigma} \) denotes the longest proper initial segment of \( \sigma \).  The set of (infinite) paths through a tree \( T \) is denoted \( [T] \) and \( \pruneTree{T} \) is the set of strings in \( T \) extended by some infinite path.  We call functions from \( \wstrs \) to \( \wstrs \) monotonic if it is an isomorphism of the partial ordering \( \subsetneq \) on it's domain and range.  We abuse notation and use \( T\restr{n} \) to denote the members of \( T \) of length at most \( n \) and write \( \theta(f) \) for \( \Union_{\sigma \subset f} \theta(\tau) \) when \( \theta \) is monotonic and total on \( \set{\sigma}{\sigma \subseteq f} \).

Kleene's set of ordinal notations is \( \kleeneO \), the canonical ordering of notations is \( \kleeneleq \) and \( + \) gives the effective sum of notations.  When \( \lambda \) is a limit notation we denote the \( n \)-th element of the effectively given increasing sequence defining \( \lambda \)  by \( \kleenelim{\lambda}{n} \).  We write \( \Csigman{\alpha} \) and \(  \Cpin{\alpha} \) for the collections of computably \( \sigman{\alpha}\) and \(  \pin{\alpha} \) formulas and \( \Csigman[X]{\alpha} \) and \(  \Cpin[X]{\alpha} \) when a predicate for membership in \( X \) is introduced into the language.  We use \( \LLor \) and \( \LLand \) to denote infinite disjunction  and infinite conjunction respectively.  We refer the reader to \cite{jumps-of-orderings,computable-structures-and-the-hyperarithmetical-hierarchy} for more on computable infinitary formula and to \cite{higher-recursion-theory} for more on \( \kleeneO \).

We do introduce a few non-standard pieces of notation particular to the subject matter.  Given partial functions \( r, p \) we write \( r \fung p \) if \( r(x) \geq p(x)  \) whenever they are both defined.  When \( f\) and \(  g \) are total functions we read \( f \fung g \) as \( f \) majorizes \( g \).  We say \( f \) dominates \( g \) if some \( f^{*} \) differing from \( f \) at finitely many locations majorizes \( g \).  

We indicate the local forcing relation on \( \pruneTree{T} \) by \( \frc[T] \) and it's relativization to \( \zeron{\beta} \) by \( \frc({\zeron{\beta}})[T] \) and refer the reader to \cite{jumps-of-orderings} for the definitions of the standard forcing relation \( \frc \) and \cite{classical-recursion-theory-II} for local forcing.  Informally, \( \frc[T] \) is defined in the same manner as \( \frc \) except with all quantifications over \( \wstrs \) replaced with quantification's over \( \pruneTree{T} \) (nodes in \( T \) that extend to paths).  When we extend the usual language of forcing by introducing a predicate symbol for membership in \( X \) we write \( \sigma \frc(X)[T] \phi \) to indicate that  \( \phi \) can check membership in \( X \) as an atomic operation.  Usually the set \( X \) we are forcing relative to will be clear from context and we will simply write \( \sigma \frc[T] \phi  \).  When \( g \frc(X)[T] \phi \)  or \( g \frc(X)[T] \lnot\phi \) for every \( \phi \) in \( \Csigman[X]{\beta} \) we say that \( g \) is \( \beta \) generic on \( T \) relative to \( X \).  We will take our forcing relation to denote strong forcing, that is \( \sigma  \) forces \( \phi \in \Csigman[\zeron{\beta}]{1} \) sentences only when \( \sigma \models \phi \), i.e., \( \phi \) is satisfied by referring only to information in \( \sigma \).

It is important to note that our notion of \( f \) being \( \alpha \) generic on \( T \) does not require \( f \) to force all \( \Csigman[\pruneTree{T}]{\alpha} \) facts or their negations as some definitions of genericity on a tree require \cite{reals-n-generic-relative-to-some-perfect-tree} but only \( \Csigman{\alpha} \) facts nor does it require that \( f \) be non-isolated.  Our definition is the natural way to preserve the notion of a generic path as one on which every truth is determined by a finite initial segment while requiring \( \alpha \) generic paths on \( T \) to force all \( \Csigman[\pruneTree{T}]{\alpha} \) facts or their negations extends the idea that a generic path should be typical.  Thus under our definition there is a perfect tree \( T \) with every path through \( T \) \( \alpha \) generic on \( T \) while this would be impossible under the other notion.
   
While our standard notion of forcing is concerned only with the extendable nodes on \( T \) we will also make use of a more effective notion that, by analogy with the notion of strong forcing, we call super forcing on \( T \) denoted \( \frc*[T] \).  The definition of super forcing on \( T \) exactly mirrors the definition in \cite{jumps-of-orderings} of strong forcing modified as usual to get the local forcing relation on \( T \) instead of \( \pruneTree{T} \) as above.  That is for \( \sigma \) to force \( \lnot \phi \) on \( T \) requires that every \( \tau \supset \sigma \) with \( \tau \in \pruneTree{T} \) satisfy \( \lnot \tau \frc[T] \phi \) while for \( \sigma \) to strongly force \( \lnot \phi \) on \( T \) requires this hold for every \( \tau \supset \sigma \) with \( \tau \in T \).  Hence \( \sigma \frc*[T][\zeron{\beta}] \phi \) for  \( \phi \in \Csigman[\zeron{\beta}]{\alpha} \) is \( \sigmazn[T \Tplus \zeron{\beta}]{\alpha} \) and \( \pizn[T \Tplus \zeron{\beta}]{\alpha} \) for \( \phi \in \Cpin[\zeron{\beta}]{\alpha} \).

\section{Harrington's Refutation of McLaughlin's conjecture}

In \cite{mclaughlins-conjecture} Harrington answered McLaughlin's conjecture in the negative and we will adapt his construction to establish theorem \ref{thm:nonu} but we first present his approach.  While other variations on the theme have been called McLaughlin's conjecture the form of the conjecture refuted by Harrington in \cite{mclaughlins-conjecture} is the one appearing in \cite{one-hundred-and-two-problems-in-mathematical-logic} that asserts:

\begin{conjecture}[Mclaughlin]\label{conj:mclaughlin}
Every element of a countable arithmetic subset of \( \baire \) is an arithmetic singleton.
\end{conjecture}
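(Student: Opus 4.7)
The plan is to refute the conjecture by constructing a computable tree \( T \subseteq \wstrs \) whose set of infinite paths \( [T] \) is countable, together with a distinguished path \( f \in [T] \) that is not an arithmetic singleton. Since \( [T] \) is then \( \pizn{1} \), hence arithmetic, the set \( A = [T] \) witnesses the failure of the conjecture.

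The engine preventing \( \{f\} \) from being arithmetic is the super-forcing relation \( \frc*[T] \) introduced above. I would arrange \( f \) to super-force every arithmetic fact it satisfies --- that is, for every arithmetic \( \phi \) with \( \phi(f) \) some initial segment \( \sigma \subset f \) satisfies \( \sigma \frc*[T] \phi \) --- while also ensuring \( f \) is non-isolated in \( [T] \). Once both hold, suppose some arithmetic \( \phi \) had \( f \) as its unique solution; pick \( \sigma \subset f \) with \( \sigma \frc*[T] \phi \). Unwinding the super-forcing definition (which quantifies over all \( \tau \in T \) rather than only \( \tau \in \pruneTree{T} \)) one verifies by induction on the complexity of \( \phi \) that \( \phi(g) \) holds for every \( g \in [T] \) extending \( \sigma \), and non-isolation supplies such a \( g \) with \( g \neq f \), contradicting uniqueness.

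I would build \( T \) and \( f \) by simultaneous effective transfinite recursion on a nice ordinal notation system from Section~\ref{sec:ord-org}. At each notation \( \alpha \kleenel \wck \) one commits a further initial segment of \( f \) chosen to super-force each relevant \( \Csigman{\alpha} \) sentence positively or negatively --- exploiting that \( \sigma \frc*[T] \phi \) is arithmetically tame in the complexity of \( \phi \) --- and one simultaneously arranges a further side branch in \( T \) that diverges from \( f \) at the most recent commitment and extends to an infinite path, keeping \( f \) a limit point of \( [T] \) but contributing only countably many additional paths.

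The chief obstacle is arranging this so the final tree \( T \) is genuinely computable despite the transfinite nature of the construction: the commitments to \( f \) and to side branches are driven by forcing requirements indexed by arbitrary computable ordinals, yet the final \( T \) must be decided by a single algorithm. This is where the nice-ordinal-notation apparatus earns its keep, organizing the stages into an effective sequence and permitting the forcing requirements at level \( \alpha \) to be discharged using the complexity estimates on \( \frc*[T] \) recorded in the excerpt. A further delicate point is the balance between pruning aggressively enough to keep \( [T] \) countable and leaving enough branching that \( f \) remains a non-isolated limit of infinite paths; preserving this balance through the limit stages of the notation recursion is the technical heart of the argument.
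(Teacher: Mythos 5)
Your plan founders at the claim that \( \sigma \frc*[T] \phi \) propagates to \emph{every} path through \( T \) extending \( \sigma \). That claim is sound at the \( \Sigma^0_1 \) and \( \Pi^0_1 \) levels and even at \( \Sigma^0_2 \) (a super-forced \( \Sigma^0_2 \) sentence fixes a witness and super-forces the corresponding \( \Pi^0_1 \) claim), but the induction breaks at \( \Pi^0_2 \): if \( \phi=\lnot\psi \) with \( \psi \in \Csigman{2} \), then \( \sigma \frc*[T] \phi \) unwinds to a density condition --- for every \( \tau\supseteq\sigma \) in \( T \) there is a further \( \tau'\in T \) killing each witness of \( \psi \) --- and a particular path \( g\supset\sigma \) is free to dodge all of those \( \tau' \), so \( \psi(g) \) may well hold. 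Concretely, with \( T=\wstrs \), \( \sigma=\estr \), and \( \phi \) asserting that infinitely many \( x \) have \( g(x)=0 \), every node super-forces \( \phi \) (since every node has an extension placing another \( 0 \)), yet the path \( g\equiv 1 \) falsifies \( \phi \). Forcing, however strong the notion, transfers to truth only along \emph{generic} paths, and your construction makes no attempt to render the auxiliary side-branch paths generic.

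The paper's refutation sidesteps this by doing something much harder: Harrington's Theorem~\ref{thm:harrington-mcl} produces a computable \( T \), homeomorphic to a given \( T_\omega \) with a non-isolated path, in which \emph{every} path is \( <\omega \)-generic on \( T \). Then, given an arithmetic \( \phi \) with unique solution \( f \) and a second path \( f'\neq f \) extending some \( \sigma\subset f \) forcing \( \phi \), it is the genericity of \( f' \) --- not any special feature of the forcing relation --- that yields \( \phi(f') \) and the contradiction. To repair your approach you would have to arrange that each side branch you plant is itself arithmetically generic on \( T \), simultaneously with the requirements on \( f \) and the need to keep \( T \) computable; at that point you have reinvented the obligation that Harrington discharges with the downward tower of trees \( T_n \Tleq \zeron{n} \) and the homeomorphisms \( \theta^{n+1} \), and the nice-notation apparatus alone does not carry that weight.
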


Harrington's refutation consisted of the following theorem.

\begin{theorem}[Harrington]\label{thm:harrington-mcl}
For every computable tree \( T_{\omega} \subset \wstrs \) is a computable tree \( T \subset \wstrs \) such \( [T] \) and \( [T_{\omega}] \) are homeomorphic and every \( f \in [T] \) is arithmetically (\( < \omega \)) generic on \( [T] \).
\end{theorem}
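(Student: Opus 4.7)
The plan is to realize \( T \) as the initial-segment closure of \( \theta(T_\omega) \) for some computable monotonic map \( \theta : T_\omega \to \wstrs \).  Since \( \theta \) is an order-isomorphism of \( \subsetneq \) onto its range, it preserves both compatibility and incompatibility, and it induces an effective homeomorphism \( [T_\omega] \to [T] \) via \( f \mapsto \Union_n \theta(f\restr{n}) \), provided every intermediate padding node---one of the form \( \theta(\strpred{\tau})\concat \rho\restr{j} \) for \( \theta(\tau) = \theta(\strpred{\tau})\concat\rho \) and \( j < \lh{\rho} \)---is given a unique successor in \( T \) on the way toward \( \theta(\tau) \), so that no spurious branches arise.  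The real work lies in choosing each padding string \( \rho \) to engineer arithmetic genericity of every path.

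Fix a computable enumeration \( \{\phi_e\}_{e \in \omega} \) of all arithmetic sentences with \( \phi_e \in \Csigman{n(e)} \), and construct \( \theta \) by recursion on \( \lh{\tau} \).  At a node \( \tau \in T_\omega \) with \( \lh{\tau} = e+1 \), extend \( \theta(\strpred{\tau}) \) to \( \theta(\tau) \) by a finite block that first encodes \( \tau(e) \) (to preserve monotonicity) and then is padded so that \( \theta(\tau) \frc*[T] \phi_e \) whenever any such padding exists, and otherwise so that \( \theta(\tau) \frc*[T] \lnot\phi_e \).  Because super forcing a \( \Csigman{n} \) sentence on a computable tree \( T \) has complexity \( \sigmazn{n} \), each such step consults only a fixed arithmetic oracle, and because every infinite path through \( T_\omega \) passes through a node of each length \( e+1 \), every \( f \in [T] \) contains a determined initial segment for every requirement.

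The main obstacle is the apparent circularity: \( \frc*[T] \) is defined in terms of \( T \), yet \( T \) is exactly what we are building.  To resolve this, observe that every extension of \( \theta(\strpred{\tau}) \) in \( T \) is of the form \( \theta(\tau') \) (or an intermediate padding node below one) for some \( \tau' \in T_\omega \) properly extending \( \strpred{\tau} \); hence the question used to select the padding for \( \theta(\tau) \) unfolds into a fixed arithmetic statement about \( T_\omega \) together with the already-specified portion of \( \theta \), and a level-indexed induction establishes that both \( \theta \) and \( T \) are well defined and computable.  The final step is to upgrade super forcing to ordinary local forcing along paths: invoking the compatibility lemma that \( \sigma \frc*[T] \phi \) implies \( \sigma \frc[T] \phi \) whenever \( \sigma \) is extendible in \( T \), we conclude that each \( f = \theta(g) \in [T] \) has \( \theta(g\restr{e+1}) \subset f \) forcing \( \phi_e \) or \( \lnot\phi_e \) on \( T \), making \( f \) be \( n \) generic on \( T \) for every \( n < \omega \).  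I expect this last upgrade to be the chief technical hurdle, since it demands that every padding node actually be extendible in \( T \), a property that must be maintained throughout the construction.
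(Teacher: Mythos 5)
Your proposal is essentially the naive ``frontal assault'' that the paper explicitly flags as unworkable, and the gap is the one the paper names: you cannot keep \( T \) computable this way. At a node of length \( e+1 \) you choose the padding by asking whether some extension of the partial tree super forces \( \phi_e \in \Csigman{n(e)} \); that question has complexity \( \sigmazn{n(e)} \), and since \( n(e) \) is unbounded, the map \( \theta \) you build consults oracles \( \zeron{n} \) for every \( n \). The resulting \( T \) would only be arithmetic (indeed roughly \( \zeron{\omega} \)-computable), not computable. Your remark that ``each such step consults only a fixed arithmetic oracle'' is correct step-by-step but the union over all steps is the whole arithmetic hierarchy, which is exactly what ruins the uniformity you need.

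There is a second, subtler problem in your handling of negative forcing. Committing \( \theta(\tau) \frc*[T] \lnot\phi_e \) is not merely ``the default alternative'' -- it is a promise that no \( \tau' \supset \theta(\tau) \) in \( T \) super forces \( \phi_e \), i.e.\ a commitment to prune all such \( \tau' \). That pruning is constrained by the need to keep \( [T] \cong [T_\omega] \) and by the negative commitments already made at other nodes, and these constraints feed back into whether ``any such padding exists'' at \( \tau \) in the first place. Your ``level-indexed induction'' does not break this cycle, because the forcing facts at one level genuinely depend on structural decisions made at other incomparable nodes and at higher quantifier levels. The paper's resolution is to turn the construction upside down: build a tower of trees \( T_n \Tleq \zeron{n} \), each one only eagerly \( 1 \)-generic over \( \zeron{n} \) (so its construction is a routine \( \zeron{n} \)-finite-injury argument), link consecutive levels by monotonic maps \( \theta^{n+1} \Tleq \zeron{n+1} \), and observe that \( T_0\restr{l} \) depends only on \( T_l\restr{l} = T_\omega\restr{l} \), so the recursion theorem produces a computable \( T = T_0 \). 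The tower decomposition is the key organizational idea your proposal is missing; without it there is no way to make the forcing decisions effective.
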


\begin{corollary}\label{cor:mcl-false}
 \hyperref[conj:mclaughlin]{Mclaughlin's Conjecture} is false.
\end{corollary}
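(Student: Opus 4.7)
The plan is to derive the corollary from Theorem \ref{thm:harrington-mcl} by feeding that theorem a computable tree whose body is countable yet not topologically discrete. A concrete choice is the computable $T_\omega \subseteq \wstrs$ with $[T_\omega] = \{0^n 1^\omega : n \in \omega\} \cup \{0^\omega\}$, which is homeomorphic to the ordinal $\omega+1$: it is countable, every $0^n 1^\omega$ is isolated, and $0^\omega$ is a non-isolated limit point of the remaining paths.

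Applying Theorem \ref{thm:harrington-mcl} to this $T_\omega$ produces a computable $T$ with $[T]$ homeomorphic to $[T_\omega]$ and with every path through $[T]$ arithmetically generic on $T$. Since $T$ is computable, $[T]$ is a $\pizn{1}$ class and in particular an arithmetically definable subset of $\baire$; the homeomorphism ensures $[T]$ is countable and contains at least one non-isolated path $f$. So $[T]$ is already a countable arithmetic subset of $\baire$, and it suffices to exhibit an element of $[T]$ that is not an arithmetic singleton; I claim any non-isolated $f \in [T]$ works.

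Suppose toward contradiction that $\{f\}$ is arithmetic, so $f$ is the unique satisfier of some arithmetic formula $\phi$ in the forcing language applied to the generic path. Since $f$ is arithmetically generic on $T$, there is an initial segment $\sigma \subset f$ with $\sigma \frc[T] \phi$. But every other $g \in [T]$ extending $\sigma$ is itself, by Theorem \ref{thm:harrington-mcl}, arithmetically generic on $T$, so each such $g$ satisfies $\phi$ and hence equals $f$. This makes $\sigma$ an isolating initial segment of $f$ in $[T]$, contradicting the choice of $f$ as non-isolated. Hence $f$ is not an arithmetic singleton.

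All the substance of the argument lives inside Theorem \ref{thm:harrington-mcl}; given that theorem, the corollary needs only the routine choice of $T_\omega$ and the standard observation that a generic whose singleton is arithmetically definable must be isolated. Consequently there is no real obstacle at the level of the corollary itself — the difficulty is entirely absorbed into the construction of the generic tree $T$ supplied by Theorem \ref{thm:harrington-mcl}.
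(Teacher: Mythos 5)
Your argument is correct and follows the same approach as the paper's sketch: pick a $T_\omega$ with a non-isolated path, pass to the generic tree $T$ via Theorem~\ref{thm:harrington-mcl}, and derive a contradiction from the fact that a forcing condition $\sigma \subset f$ deciding the putative defining formula would also be satisfied by any other generic path through $\sigma$. The only difference is cosmetic: you make explicit a concrete countable $T_\omega$ and observe that $[T]$ is a $\pizn{1}$ (hence arithmetic) class, details which the paper's sketch leaves implicit.
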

We sketch how Harrington's result contradicts McLaughlin's conjecture. 

\begin{proof}
Let \( T_{\omega} \) have some non-isolated path \( f_\omega \).  Thus the homeomorphic image of \( f_\omega \), \( f \) is a non-isolated path through \( T \).  Now suppose that \( \phi(g) \) is an arithmetic predicate with unique solution \( f \).  By genericity we must have \( f \frc[T]  \phi \) hence some \( \sigma \subset f \) forces \( \phi \).  As \( f \) non-isolated there is some \( f' \neq f \) also extending \( \sigma \).  At \( f' \) is also \( < \omega \) generic on \( T \) and \( f' \frc[T]  \phi \) we have \( \phi(f') \).  Contradiction.
\end{proof}

\subsection{Sketch of the result for \( \omega \)}

Once we know that Harrington's result is true the natural approach for a recursion theorist is simply to go out and build \( T \) as some kind of distorted copy of \( T_{\omega} \) while trying to meet the genericity requirements.  The natural approach would be to simply go ahead and try to build \( T \) directly but of course if that worked straightforwardly the conjecture would likely never have remained open for as it did.  In particular the `nested' nature of the genericity requirements makes direct construction of \( T \) extremely difficult.  To force \( \Csigman{n+1} \) facts about \( f \)  we need to react to the \textit{particular} way we've failed to force  \( \Csigman{n} \) facts about \( f \).  Were we building \( f \) to be fully generic there would be no question about how \( f \) forced \( \Csigman{n} \) and \( \Cpin{n} \) facts.  We could simply read off from the definition of forcing whether a given \( \sigma \subset f \) forced some instance \( \phi(x)  \) of a \( \Cpin{n} \) formula and simply require \( f \) to extend some appropriate \( \sigma' \supset \sigma \).  But as we clearly can't build our desired \( f \) to be even fully \( 1 \)-generic here we must sometimes bring it about that \( \sigma \subset f \) forces some \( \Cpin{n} \) formula \( \lnot\phi(x) \) despite the fact that \( \phi \) is true on a co-meager set in \( \baire \) by pruning from \( T \) all extensions of \( \sigma \) that force \( \phi \).  Doing this on it's own while keeping \( T \) computable would be organizationally difficult but if we are to keep \( [T]=[T_\omega] \) we must somehow also anticipate when our commitment to somehow copy \( T_\omega \) will be incompatible with trying to force a sentence in a particular direction. Therefore, rather than a frontal assault Harrington described how we can attack the problem in reverse in a manner that provides all our organization for free.

The approach taken by Harrington rests on projecting down \( T_\omega \) to a sequence on intermediate trees \( T_n \) with \( T=T_0 \).  Each \( T_n \) will be computable in \( \zeron{n} \) and \( \pruneTree{T_n} \) will be the image of \( \pruneTree{T_{n+1}} \) under \( \theta^{n+1} \Tleq \zeron{n+1} \) where \( \theta^{n+1}  \) is monotonic and thus a homeomorphism from \( [T_{n+1}] \) to \( [T_n] \).  Furthermore if \( f \in [T_{n+1}] \) and \( f \) is \( m\) generic relative to \( \zeron{n+1}  \) on \( T_{n+1} \) then \( \theta^{n+1}(f) \) is \( m+1 \) generic relative to \( \zeron{n} \)  on \( T_n \).  Thus if this construction succeeds every \( f \in [T_0] \) is \( m \) generic on \( [T_0] \) for every \( m \in \omega \) as it's the image of some \( f_m \in T_m \) under \( \theta^{m} \compose \theta^{m-1} \compose \ldots \theta^1 \) which we abbreviate as \( \treemap{m}{0}  \).

To ensure that \( T_0 \) is homeomorphic with \( T_{\omega} \) Harrington also required that \( T_{n}\restr{m} = T_{\omega}\restr{m} \) and that \( \lh{\theta^n(\sigma)} \geq \lh{\sigma} \).  Now define \( \treemap{\omega}{0}(\sigma) \) to be \( \treemap{\lh{\sigma}}{0}(\sigma) \) and note that \( \treemap{\omega}{0}(\sigma) \in T_0 \iff \sigma \in T_m \iff \sigma \in T_\omega \).  Hence \( \treemap{\omega}{0}(\sigma) \) is clearly a continuous bijection between \( T_\omega \) and \( T_0=T \).  The only remaining problem is to construct such a sequence.  The trick here is to observe that \( T_0\restr{l} \) depends (more or less) only on \( T_1\restr{l}\) and \( T_1\restr{l}\) depends only on \( T_2\restr{l}\) and so on.  Thus \( T_0 \) can be built by looking only at \( T_l\restr{l}=T_\omega\restr{l} \).  This argument isn't too difficult to formalize in terms of the recursion theorem but unfortunately many of the important applications, including the ones we use later in this paper, depend on proving the result with an arbitrary computable ordinal \( \alpha \) substituted for \( \omega \) so we must give the fully general construction.  As we will see that while conceptually identical the technical details Harrington avoided spelling out are definitely not trivial.

\section{Harrington's Result}\label{sec:har-result}

\begin{theorem}[Harrington]\label{thm:harrington-mcl-ordinal}
For every ordinal notation \( \alpha \)  and tree \( T \Tleq \zeron{\alpha} \) there is a computable tree \( T_0 \) such \( [T_0] \) and \( [T] \) are homeomorphic and every \( f \in [T_0] \) is \(  \alpha \) generic on \( [T] \).
\end{theorem}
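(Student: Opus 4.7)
The plan is to prove Theorem \ref{thm:harrington-mcl-ordinal} by transfinite induction along $\alpha$, generalising Harrington's $\omega$-level sketch. I would construct, for every notation $\beta \kleeneleq \alpha$, a tree $T_\beta \Tleq \zeron{\beta}$ (with $T_\alpha = T$), and, whenever $\beta \kleenel \gamma \kleeneleq \alpha$, a monotonic projection $\treemap{\gamma}{\beta}$ computable in $\zeron{\beta}$ that sends $\pruneTree{T_\gamma}$ onto $\pruneTree{T_\beta}$ with $\lh{\treemap{\gamma}{\beta}(\sigma)} \geq \lh{\sigma}$, the projections composing coherently in the sense that $\treemap{\gamma}{\beta} = \treemap{\delta}{\beta}\circ\treemap{\gamma}{\delta}$ whenever $\beta\kleenel\delta\kleenel\gamma$. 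A side invariant of the form $T_\beta\restr{n}=T\restr{n}$ up to suitable levels would ensure that the composite $\treemap{\alpha}{0}$ is a genuine homeomorphism of $[T]$ onto $[T_0]$. The genericity invariant I would maintain is that any $f \in [T_\gamma]$ which is $m$ generic on $T_\gamma$ relative to $\zeron{\gamma}$ has image $\treemap{\gamma}{\beta}(f)$ on $T_\beta$ which, relative to $\zeron{\beta}$, is generic enough to decide every $\Csigman[\zeron{\gamma}]{m}$ sentence; iterating from $T_\alpha$ all the way down then forces every $f \in [T_0]$ to be $\alpha$ generic on $T_0$.

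At successor stages $\beta + 1$ I would apply a direct relativisation of the one-step construction Harrington describes for $\omega$: given $T_{\beta+1}$, thin it out using $\zeron{\beta+1}$ so that each surviving extendible node canonically decides every $\Csigman[\zeron{\beta}]{1}$ sentence, letting $T_\beta$ be the corresponding projected tree and $\theta^{\beta+1}$ be the thinning map. The verification that an $m$ generic on the source becomes an $m+1$ generic one jump lower is Harrington's unchanged argument, performed inside $\pruneTree{T_{\beta+1}}$.

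At a limit notation $\lambda$ with fundamental sequence $\kleenelim{\lambda}{n}$ I would stitch $T_\lambda$ together from the previously constructed $T_{\kleenelim{\lambda}{n}}$, reading $T_\lambda\restr{n}$ off some sufficiently late $T_{\kleenelim{\lambda}{k}}\restr{n}$ (using the agreement invariant to know those truncations have stabilised) and taking $\treemap{\lambda}{\kleenelim{\lambda}{n}}$ to be the limit of the previously constructed successor projections along the fundamental sequence. The genericity bookkeeping then propagates: an $m$ generic on $T_\lambda$ relative to $\zeron{\lambda}$ descends through each $T_{\kleenelim{\lambda}{n}}$ with the appropriate gain, and every $\Csigman[\zeron{\lambda}]{m}$ sentence is eventually absorbed into the successor steps below $\lambda$.

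The main obstacle, and where Harrington's offhand ``recursion theorem'' remark turns into real work, lies in making the limit passage uniformly effective in the correct jump. One must be able to identify, effectively in $\zeron{\lambda}$, exactly which portions of each $T_{\kleenelim{\lambda}{n}}$ have stabilised and may safely be adopted into $T_\lambda$; a generic fundamental sequence does not guarantee this, which is precisely the role of the nice ordinal notations developed in Section \ref{sec:ord-org}. With nice notations in hand one can arrange that $T_\beta\restr{n}$ depends on only finitely many $T_\gamma$ above $\beta$ in a uniformly $\zeron{\beta}$-decidable manner, after which a recursion-theoretic fixed point produces $T_0$ outright computably from $T$; the homeomorphism then falls out of the $\lh{\treemap{\gamma}{\beta}(\sigma)}\geq\lh{\sigma}$ invariant by the same continuous-bijection argument used in the $\omega$ sketch.
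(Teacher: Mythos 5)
Your proposal captures the broad architecture of the paper's proof --- a downward tower $\seq{T_\beta}{\beta \kleeneleq \alpha}$ with $T_\alpha = T$, agreement invariants controlling how much adjacent levels share, a one-step thinning at successors, the recursion theorem to tie up the circularity, and the nice ordinal notations to make the bookkeeping effective --- so at the level of strategy you are on exactly Harrington's track.

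There are two places, however, where your description diverges from what actually makes the argument close, and both are precisely the places you flag as ``real work.'' First, the limit stage is described with the recursion running the wrong way. You say you would stitch $T_\lambda$ together from the ``previously constructed'' $T_{\kleenelim{\lambda}{n}}$ --- but in a downward recursion starting at $T_\alpha$, the trees along the fundamental sequence of $\lambda$ lie \emph{below} $\lambda$ and have not yet been produced. In the paper's construction (lemmas \ref{lem:tree-minus-one} and \ref{lem:downward-tower-exists}), $T_\lambda$ for a limit $\lambda \kleenel \alpha$ is built by the very same one-step lemma from the trees \emph{above} it, namely $T_{\lambda+1}$ and $T_{\enumpred{\lambda}}$; the identity $T_\lambda = \lim_{\beta \kleenel \lambda} T_\beta$ is a consistency invariant that the fixed point of the recursion is verified to satisfy, not a constructive step. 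The whole point of the function $\copylen{\cdot}$ and the auxiliary $\enumpred{\cdot}$ machinery is to make that circular dependency resolvable by the recursion theorem, and your proposal has the information flow backwards there.

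Second, your genericity invariant is the ``$m$ generic relative to $\zeron{\gamma}$ projects to something deciding $\Csigman[\zeron{\gamma}]{m}$ one level down'' claim lifted from the $\omega$-sketch. That propagates cleanly through finitely many successor steps, but it is not obvious how it survives a limit passage, which is exactly where the general theorem lives. The paper sidesteps this by maintaining only a fixed, one-jump amount of genericity at every level (the eagerly generic condition of definition \ref{def:eager}) and then proving a formula-translation lemma (definition \ref{def:psi-star} and lemma \ref{lem:psi-star-forcing}) showing that any $\phi \in \Csigman{\alpha}$ translates to a $\phi^\alpha \in \Csigman[\zeron{\alpha}]{1}$ with forcing preserved along the composite projections, including across limits. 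The $1$-genericity on $T_\alpha$ then delivers full $\alpha$-genericity at $T_0$ in one stroke (lemma \ref{lem:totally-generic}). Without some analogue of this translation step, your ``absorbed into the successor steps below $\lambda$'' remains a gesture rather than an argument; I would encourage you to look for a single invariant that is uniform across all levels rather than a gain-of-one-quantifier invariant, since the latter does not scale past $\omega$ without extra work.
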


\subsection{Preliminaries}\label{subsec:prelim}

While this tells us how to copy \( T_{\lambda}\restr{l} \) down to lower trees it's no longer obvious how much we should copy.  To extend Harrington's argument to a sequence of length \( \alpha \) we will need to somehow specify an integer \( \copylen{\beta} \) for every \( \beta \kleenel \alpha \) telling us how much of \( T_{\beta+1} \) we should copy down to \( T_{\beta} \).   We need to ensure that \( T_{\lambda} \) will be the limit of \( T_\beta \) for \( \beta \kleenel \lambda \) so \( [T_0]=[T_\alpha] \) and that we can define \( T_{\kleenelim{\lambda}{n}}\restr{\copylen{\kleenelim{\lambda}{n}}} \) from only \( T_{\kleenelim{\lambda}{n}} \) to ensure our fixed point is non-empty so we must have:

\begin{align}
		& \forall[\gamma]\left( \kleenelim{\lambda}{n} \kleeneleq \beta \kleenel \lambda \implies \copylen{\kleenelim{\lambda}{n}} \leq \copylen{\beta}  \right)\label{edef:copylen-between} \\
	 & \lim_{n \to \infty} \copylen{\kleenelim{\lambda}{n}} = \infty \\
	\shortintertext{For concreteness we will also insist that}
	&\label{edef:copylen} \copylen{\beta} = \begin{cases}
	 												0 & \text{if } \enumpred{\beta}=\diverge\\
													\copylen{\enumpred{\beta}}+n & \text{if }  \beta=\kleenelim{\enumpred{\beta}}{n} 
											\end{cases}
\end{align}

The difficulty in achieving these conditions is that in general a notation \( \beta \) could appear at arbitrary places in the effective limit for arbitrarily many \( \lambda \kleeneg \beta \).  A further difficulty is posed by the need to build a single function \( \copylen{\beta} \) defined on a path through \( \kleeneO \) as required by some of the corollaries.   Our strategy is to associate to each \(\beta \) a unique limit notation \( \enumpred{\beta} \) to whose effective limit \( \beta \) belongs.  Since this proof is fairly technical we delay it's presentation until the appendix and blithely continue assuming we have a computable function \( \copylen{\beta} \) satisfying the above (below \( \alpha \)) and that every \( \beta \) appears in at most one effective limit (below \( \alpha \)) denoted \( \enumpred{\beta} \) (extended to be total, increasing, limit valued).  This is slightly inaccurate, but we reserve those qualifications for the appendix.

This resolves the problem of how much to copy but we don't yet know exactly what to copy.  In the sketch of theorem \ref{thm:harrington-mcl} we had a computable tree \( T_\omega \) but in general at limit stages \( T_{\lambda} \) will only be computable in \( \zeron{\lambda} \) but we will still need to copy \( T_\lambda\restr{\copylen{\kleenelim{\lambda}{n}}} \) down to \( T_{\kleenelim{\lambda}{n}} \Tleq \zeron{\kleenelim{\lambda}{n}} \).  We show that we can always convert our trees to a form in which the segments requiring copying can always be uniformly recovered from the appropriate degree.

\begin{definition}\label{def:uniform-tree}
	Say the tree \( T_\beta \Tleq \zeron{\beta} \) is uniform if \( T_\beta\restr{\copylen{\kleenelim{\beta}{n}}} \) is uniformly computable from \( \zeron{\kleenelim{\beta}{n}} \) for all \( n \).
\end{definition}

Converting \( T_\lambda \) into a uniform tree simply requires we delay killing branches in \( T_\lambda \) until they are long enough that we can use enough of \( \zeron{\lambda} \) to verify the branch gets killed.

\begin{lemma}\label{lem:convert-to-n-bit}
Suppose \( T_\lambda \Tleq \zeron{\lambda} \), \( T_\lambda\restr{\copylen{\lambda}}=T_{\enumpred{\lambda}}\restr{\copylen{\lambda}} \) and \( T_{\enumpred{\lambda}} \) is uniform then there is a uniform \( \hat{T_\lambda} \) with \( [T_\lambda]=[\hat{T_\lambda}] \) and \(  T_\lambda\restr{\copylen{\lambda}}=\hat{T}_\lambda\restr{\copylen{\lambda}} \).  Furthermore, this holds with all possible uniformity.  
\end{lemma}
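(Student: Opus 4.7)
The plan follows the hint preceding the lemma: delay killing strings in \( T_\lambda \) until enough of \( \zeron{\lambda} \) can be read off from the lower jumps it encodes as columns.  Since \( \zeron{\lambda} \) is effectively the join of the \( \zeron{\kleenelim{\lambda}{n}} \), any Turing reduction witnessing \( T_\lambda \Tleq \zeron{\lambda} \) decomposes each oracle query into a query to some fixed column \( \zeron{\kleenelim{\lambda}{n}} \).  For each \( l \geq \copylen{\lambda} \) let \( n(l) \) be the largest \( n \) with \( \copylen{\kleenelim{\lambda}{n}} \leq l \); by the hypotheses on \( \copylen{\cdot} \) in \eqref{edef:copylen-between} together with \( \lim_n \copylen{\kleenelim{\lambda}{n}} = \infty \), the function \( n(l) \) is well-defined, nondecreasing, and diverges to infinity.

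I would then define \( \hat{T_\lambda} \) by cases.  For \( \lh{\sigma} < \copylen{\lambda} \), set \( \hat{T_\lambda}(\sigma) = T_{\enumpred{\lambda}}(\sigma) \), which equals \( T_\lambda(\sigma) \) by the hypothesis \( T_\lambda\restr{\copylen{\lambda}} = T_{\enumpred{\lambda}}\restr{\copylen{\lambda}} \) and inherits uniformity from \( T_{\enumpred{\lambda}} \).  For \( \lh{\sigma} \geq \copylen{\lambda} \), declare \( \sigma \in \hat{T_\lambda} \) iff no initial segment \( \tau \subseteq \sigma \) is refuted outside \( T_\lambda \) by the fixed reduction when that reduction is restricted to answering queries only from columns \( \leq n(\lh{\sigma}) \) of \( \zeron{\lambda} \).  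By construction each initial segment \( \hat{T_\lambda}\restr{\copylen{\kleenelim{\lambda}{n}}} \) depends only on \( \zeron{\kleenelim{\lambda}{n}} \) (which decodes both the portion of \( T_{\enumpred{\lambda}} \) needed in case one and the first \( n+1 \) columns of \( \zeron{\lambda} \) needed in case two), supplying the required uniformity.

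The verifications are routine.  Closure under initial segments follows from monotonicity of \( n(\cdot) \): any refutation witnessable with a smaller oracle is still witnessable with a larger one, so \( \sigma' \in \hat{T_\lambda} \) forces \( \sigma \in \hat{T_\lambda} \) for each \( \sigma \subseteq \sigma' \).  Agreement with \( T_\lambda \) below length \( \copylen{\lambda} \) is immediate from case one.  For \( [\hat{T_\lambda}] = [T_\lambda] \), the inclusion \( [T_\lambda] \subseteq [\hat{T_\lambda}] \) holds because refutations are sound and never fire on members of \( T_\lambda \); conversely, if \( f \notin [T_\lambda] \) then some \( f\restr{l_0} \notin T_\lambda \), its refutation involves only finitely many columns of \( \zeron{\lambda} \), so for \( l \) large enough \( n(l) \) surpasses that bound and \( f\restr{l} \notin \hat{T_\lambda} \).

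The main obstacle is the clause "all possible uniformity".  One must bridge the uniformity of \( T_{\enumpred{\lambda}} \) (packaged along the sequence \( \kleenelim{\enumpred{\lambda}}{k} \)) with the required uniformity of \( \hat{T_\lambda} \) (demanded along the sequence \( \kleenelim{\lambda}{n} \)), so that in the case-one region the portion of \( T_{\enumpred{\lambda}} \) up through length \( \copylen{\kleenelim{\lambda}{n}} \) is uniformly recoverable from \( \zeron{\kleenelim{\lambda}{n}} \).  This is precisely what the ordinal-notation preparation in the appendix is designed to deliver; once that correspondence is in place, effectiveness in the notation \( \lambda \) and in the indices for \( T_\lambda \) and \( T_{\enumpred{\lambda}} \) is a routine matter of tracking parameters through the construction.
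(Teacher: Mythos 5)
Your construction is the same as the paper's: delay the removal of a string from the tree until enough of \( \zeron{\lambda} \) (i.e., enough columns encoding \( \zeron{\kleenelim{\lambda}{m}} \)) can be consulted, and handle the initial segment below length \( \copylen{\lambda} \) via the hypothesized uniformity of \( T_{\enumpred{\lambda}} \). The only difference is that you bound the column index by \( n(l)=l-\copylen{\lambda} \) while the paper writes \( \kleenelim{\lambda}{l} \); your accounting is, if anything, the more careful one for getting \( \hat{T}_\lambda\restr{\copylen{\kleenelim{\lambda}{n}}} \) decidable from \( \zeron{\kleenelim{\lambda}{n}} \), but the underlying idea and verification are identical.
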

 \begin{proof}
	As \( T_{\enumpred{\lambda}} \) is uniform we may let \( \hat{T}_\lambda\restr{\copylen{\lambda}}=T_\lambda\restr{\copylen{\lambda}} \) without difficulty.  Now given \( \sigma \in \bstrs \) with \( \copylen{\lambda} < \lh{\sigma} =l \) place \( \sigma \in \hat{T_\lambda} \) unless some computation showing that \( \sigma \nin T_\lambda \) converges in at most \( l \) many steps while consulting only those columns of \( \zeron{\lambda} \) that encode  \( \zeron{\gamma} \) for \( \gamma \kleeneleq\kleenelim{\lambda}{l} \).   The uniformity is evident in the proof.
\end{proof}

For the remainder of the paper we will apply the preceding lemma without comment and assume without comment that any needed conversion of this kind is done behind the scenes.

\subsection{The Desired Sequence}

In this section we fix some ordinal notation \( \alpha \) and work to build a tower of trees \( \seq{T_{\beta}}{\beta \kleeneleq \alpha} \) as we sketched for \( \omega \) (technically speaking \( \alpha \) isn't truly arbitrary only the ordinal it denotes is).  We first describe the properties of the trees we seek to build.  At each \( \beta \) we will try to build our tree \( T_\beta \Tleq \zeron{\beta} \) so that every path meets every \( \sigmazi[\zeron{\beta}] \) set as soon as possible.  We capture the effect of this construction below with the notion of eagerly generic (meaning \( 1 \) generic over \( \zeron{\beta} \)). 

\begin{definition}\label{def:eager}
	Say that \( T \Tleq \zeron{\beta} \) is eagerly generic (over \( \zeron{\beta} \)) if for all \( f \in [T] \) and \( \psi \in \Csigman[\zeron{\beta}]{1} \) there is some \( \sigma \subset f \) such that either \( \sigma \) super forces \( \psi \) (\( \sigma \) witnesses the \( \Csigman[\zeron{\beta}]{1} \) fact) or \( \sigma \) super forces \( \lnot\psi \) (no \( \tau \in T \) (not \( \pruneTree{T} \)) super forces \( \psi \)).
\end{definition}

Note that this immediately entails that every  \( f \in [T_\beta] \) is \( 1 \) generic relative to \( \zeron{\beta} \) on \( T_\beta \) and the relation \( f \frc({\zeron{\beta}})[T_\beta] \psi \)  is computable in \( \zeron{\beta+1} \) for \( \psi \in \Csigman[\zeron{\beta}]{1} \union \Cpin[\zeron{\beta}]{1} \).  Since forcing on \( T_\beta \) always occurs relative to \( \zeron{\beta} \) we will abbreviate \( \frc({\zeron{\beta}})[T_\beta] \) as \( \frc[T_\beta] \) or even \( \frc[\beta] \) where this won't generate confusion. We are now ready to precisely state the conditions our trees aim to satisfy.

\begin{definition}\label{def:downwardly-generic}
	Say that a sequence \( \seq{T_\beta}{\beta \kleeneleq \alpha} \) is a downwardly generic tower (of length \( \alpha \)) if
	\begin{arabiclist}
		\item For \( \beta \kleeneleq \alpha \),  \( T_\beta \Tleq \zeron{\beta} \)\label{def:downwardly-generic:beta-computable}.
		\item For \( \beta \kleeneleq \alpha \), \( T_\beta \) is  eagerly generic over \( \zeron{\beta} \)\label{def:downwardly-generic:eagerly}.
		\item If \( \beta +1 \kleeneleq \alpha \) there is a monotonic embedding \(  \theta^{\beta +1} \Tleq \zeron{\beta+1} \) of  \( T_{\beta+1} \) into \( T_\beta \) that induces a homeomorphism of \( [T_{\beta+1}] \) and \( [T_\beta] \).\label{def:downwardly-generic:map}
		\item \ref{def:downwardly-generic:map} and \ref{def:downwardly-generic:beta-computable} hold uniformly in \( \beta \).\label{def:downwardly-generic:unifom}
		\item \( T_\beta\restr{\copylen{\beta}}=T_{\beta+1}\restr{\copylen{\beta}} \).\label{def:downwardly-generic:copying}
		\item If \( \lambda \) a limit \( T_\lambda = \lim_{\beta \kleenel \lambda} T_\beta \)\label{def:downwardly-generic:lim-unions}
	\end{arabiclist}
\end{definition}

Note that by \eqref{edef:copylen} these last two conditions entail that \( T_\lambda\restr{\copylen{\lambda}}=T_{\enumpred{\lambda}}\restr{\copylen{\lambda}} \).  For the remainder of this subsection we fix a downwardly generic tower \( \seq{T_\beta}{\beta \kleeneleq \alpha} \) of length \( \alpha \) and proceed to demonstrate it has the desired properties.  First we observe that our tower preserves the topological structure of \( T_{\alpha} \) and provides a moderately effective means of translation between trees at different levels.

\begin{lemma}\label{lem:tower-homeomorphism}
	For every \( \gamma \kleenel \beta \kleeneleq \alpha \) there is a monotonic function  \( \treemap{\beta}{\gamma}\map{T_\beta}{T_\gamma} \) uniformly computable in \( \zeron{\beta} \) embedding \( T_\beta \) into \( T_\gamma \) so as to induce a homeomorphism of \( [T_\beta] \) and \( [T_\gamma] \) uniquely defined by the constraints:
	\begin{arabiclist}
		\item If \( \beta \kleenel \gamma \kleenel \lambda \) then \( \treemap{\lambda}{\beta}=\treemap{\gamma}{\beta} \compfunc\treemap{\lambda}{\gamma} \)
		\item \( \treemap{\beta+1}{\beta}=\theta^{\beta+1} \)\label{lem:tower-homeomorphism:extend-theta}\\
		\item If \( \lh{\sigma} \leq \copylen{\beta} \) then \( \treemap{\enumpred{\beta}}{\beta}(\sigma) = \sigma \) \label{lem:tower-homeomorphism:limits}.
	\end{arabiclist}
\end{lemma}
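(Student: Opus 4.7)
The plan is transfinite recursion on $\beta \kleeneleq \alpha$, building $\treemap{\beta}{\gamma}$ at stage $\beta$ for all $\gamma \kleenel \beta$ simultaneously while maintaining at each stage the three constraints, monotonicity, the induced homeomorphism of $[T_\beta]$ with $[T_\gamma]$, and uniform $\zeron{\beta}$-computability. Uniqueness will be immediate from the constraints themselves: every $\treemap{\beta}{\gamma}$ is forced to factor through the atomic data supplied by constraints \ref{lem:tower-homeomorphism:extend-theta} and \ref{lem:tower-homeomorphism:limits}, so any two families satisfying all three constraints coincide.

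At a successor stage $\beta = \delta+1$ the constraints leave no choice: $\treemap{\beta}{\delta} := \theta^{\beta}$ and $\treemap{\beta}{\gamma} := \treemap{\delta}{\gamma} \compose \theta^{\beta}$ for $\gamma \kleenel \delta$. All required properties transfer directly from the corresponding properties of $\theta^{\beta}$ (tower clause \ref{def:downwardly-generic:map}) and the inductive $\treemap{\delta}{\gamma}$. At a limit stage $\lambda$, for each $\gamma \kleenel \lambda$ and $\sigma \in T_\lambda$ I select the least $n$ with $\gamma \kleenel \kleenelim{\lambda}{n}$ and $\copylen{\kleenelim{\lambda}{n}} \geq \lh{\sigma}$ (which exists because $\copylen{\kleenelim{\lambda}{n}} \to \infty$), and set $\treemap{\lambda}{\gamma}(\sigma) := \treemap{\kleenelim{\lambda}{n}}{\gamma}(\sigma)$ using the inductively available inner map. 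Tower clauses \ref{def:downwardly-generic:copying} and \ref{def:downwardly-generic:lim-unions} guarantee $T_\lambda \restr{\copylen{\kleenelim{\lambda}{n}}} = T_{\kleenelim{\lambda}{n}} \restr{\copylen{\kleenelim{\lambda}{n}}}$ so that $\sigma$ really lies in the domain of the inner map. Monotonicity and the induced homeomorphism on path spaces then follow, and uniform $\zeron{\lambda}$-computability follows from the uniform computability of $\kleenelim{\lambda}{\cdot}$, $\copylen{\cdot}$, and the inductive maps relative to $\zeron{\lambda}$.

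The chief obstacle will be well-definedness of the limit-stage assignment---showing that a larger choice $n' > n$ produces the same value---together with the simultaneous verification of constraint \ref{lem:tower-homeomorphism:limits} when $\gamma$ itself sits in the effective limit of $\lambda$. Both collapse to a single auxiliary clause I will track alongside the induction: whenever $\gamma \kleenel \delta \kleeneleq \beta$ both lie in the effective limit sequence of a common limit notation $\mu$ and $\lh{\sigma} \leq \copylen{\gamma}$, one has $\treemap{\delta}{\gamma}(\sigma) = \sigma$. This clause forces $\treemap{\kleenelim{\lambda}{n'}}{\kleenelim{\lambda}{n}}(\sigma) = \sigma$ on short strings, which via the inductive transitivity $\treemap{\kleenelim{\lambda}{n'}}{\gamma} = \treemap{\kleenelim{\lambda}{n}}{\gamma} \compose \treemap{\kleenelim{\lambda}{n'}}{\kleenelim{\lambda}{n}}$ closes both verifications at once. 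Propagating the clause through the induction is straightforward at limit stages via the same minimal-$n$ argument; at successor stages it relies on either the appendix's organization of $\enumpred{\cdot}$ ruling out tower-successor pairs that share an effective limit sequence, or an analogous identity property built into $\theta^{\beta+1}$ by the upstream construction of section \ref{sec:har-result}.
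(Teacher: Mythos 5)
Your plan matches the paper's proof in its essential structure: both carry out an effective transfinite recursion whose successor rule composes with \( \theta^{\beta+1} \) and whose limit rule routes \( \treemap{\lambda}{\gamma}(\sigma) \) through \( \treemap{\kleenelim{\lambda}{n}}{\gamma}(\sigma) \) for a suitably large \( n \). You are in fact more explicit than the paper on the well-definedness point, where the paper merely cites \eqref{edef:copylen-between} and clause \ref{def:downwardly-generic:copying} of definition \ref{def:downwardly-generic} and asserts that any admissible choice gives the same answer. Your auxiliary identity clause is the right invariant, but note that it discharges via \emph{both} of the mechanisms you hedge between rather than one or the other: the appendix's construction of nice notations ensures that only limit notations (or \( 0 \)) ever appear in an effective limit sequence, so the clause is only ever asserted with \( \delta \) a limit; yet unwinding the recursion at such \( \delta \) passes through the intermediate successor maps \( \theta^{\xi+1} \) for \( \gamma \kleeneleq \xi \kleenel \delta \), and these are the identity on strings of length at most \( \copylen{\xi}\geq\copylen{\gamma} \) precisely because lemma \ref{lem:tree-minus-one} explicitly builds them that way. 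So the clause is provable, but by a transfinite induction that interleaves both facts.

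The genuine gap in the proposal is the sentence claiming that at limit \( \lambda \) ``the induced homeomorphism on path spaces then follow[s].'' A monotone embedding of \( T_\lambda \) into \( T_\gamma \) is automatically injective and continuous on paths, but surjectivity onto \( \pruneTree{T_\gamma} \) is not free, since \( T_\lambda \) agrees with \( T_{\kleenelim{\lambda}{n}} \) only on short strings. The paper devotes the final paragraph of its proof to exactly this: one takes a lexicographically minimal pair \( (\gamma,\beta) \) where surjectivity fails, observes the successor case cannot be minimal by clause \ref{def:downwardly-generic:map} of definition \ref{def:downwardly-generic}, and in the limit case uses minimality to pull back \( g\restr{l} \) to \( \sigma_l\in T_{\kleenelim{\gamma}{l}} \), then invokes constraint \ref{lem:tower-homeomorphism:limits} and clause \ref{def:downwardly-generic:copying} to show \( \sigma_l\subseteq\sigma_{l+1} \) and \( \sigma_l\in T_\gamma \), so \( h=\Union_l\sigma_l \) is a path through \( T_\gamma \) mapping to \( g \). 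You should not elide this; it is the part of the lemma that ultimately makes \( [T_0] \) homeomorphic to \( [T_\alpha] \).
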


\begin{proof}
We use the method of effective transfinite recursion by assuming we have some index \( e \) such that \( \recfnl{e}{\zeron{\gamma}}{\gamma,\beta,\sigma}=\treemap{\gamma}{\beta}(\sigma) \) for every \( \gamma \kleenel \kappa \) and build a computable function \( I(e) \) so that \( \recfnl{I(e)}{\zeron{\gamma}}{\gamma,\beta,\sigma}=\treemap{\gamma}{\beta}(\sigma) \) for every \( \gamma \kleeneleq \kappa \) and then use the fixed point lemma to build a single computable function working for all \( \gamma \kleeneleq \alpha \).    The behavior of \( I(e) \) is spelled out plainly for \( \kappa \) a successor (making use of part \ref{def:downwardly-generic:unifom} to recover the various indexes) and for \( \kappa \) a limit  \( \recfnl{I(e)}{\zeron{\kappa}}{\kappa,\beta,\sigma} \) computes the the value \( \recfnl{e}{\zeron{\kleenelim{\kappa}{l}}}{\kleenelim{\kappa}{l},\beta,\sigma} \) where \( l > \lh{\sigma} \) is the first such integer for which we observe \( \kleenelim{\kappa}{l} \kleeneg \beta \).  As \( \copylen{\kleenelim{\kappa}{l}} \geq l \) the fixed point of \( I(e) \) plainly defines a monotonic function for all \( \beta \kleenel \gamma \kleeneleq \alpha \) mapping \( T_\gamma \) into \( T_\beta \).  By \eqref{edef:copylen-between} and part \ref{def:downwardly-generic:copying} of definition \ref{def:downwardly-generic} any valid choice of \( l \) yields the same result establishing uniqueness.   

This leaves only the claim that \( \treemap{\beta}{\gamma} \) gives a surjection of \( \pruneTree{T_{\gamma}} \) onto  \( \pruneTree{T_{\beta}} \) to verify. Assume not and let \( \gamma, \beta \) be the lexicographically least such that the claim fails for \( \treemap{\gamma}{\beta} \).  Now clearly, by the minimality of \( \gamma \) and property \ref{def:downwardly-generic:map} of definition \ref{def:downwardly-generic}, \( \gamma\) evidently can't be a successor.  But if \( \gamma \) a limit, \( g \in [T_\beta] \) and \(  \kleenelim{\gamma}{l} \kleeneg \beta \) then by the minimality of \( \gamma \) there is some some \( \sigma_l \in T_{\kleenelim{\gamma}{l}} , \lh{\sigma_l} = l \) with  \( \treemap{\kleenelim{\gamma}{l}}{\beta}(\sigma_l) \supseteq g\restr{l} \).  By condition \ref{lem:tower-homeomorphism:limits} \( \treemap{\kleenelim{\gamma}{l+1}}{\kleenelim{\gamma}{l}}(\sigma_{l+1}) = \sigma_{l+1} \) for all large enough \( l \) so by monotonicity \( \sigma_{l+1} \supseteq \sigma_l \) and by part \ref{def:downwardly-generic:copying} of definition \ref{def:downwardly-generic}  \( \sigma_l \in T_{\gamma} \).  Thus \( h=\Union_{l \in \omega} \sigma_l \) is a path through \( T_\gamma \) with \( \treemap{\gamma}{\beta}(h)=g \).
\end{proof}

With this lemma in mind we adopt the notation that if \( g \) is a path through \( T_0 \) then \( g_\beta \) refers to the path in \( T_\beta \) which maps to \( g \) under \( \treemap{\beta}{0} \) which we abbreviate \( \treemap{\beta}{} \).  We now work toward showing \( T_0 \) will be sufficiently generic by translating \( \Csigman{\beta+1} \), \( \Cpin{\beta+1}  \) formulas about \( g \in T_0 \) to  \( \Csigman[\zeron{\beta}]{1} \) formulas about \( g_\beta \) so that if \( g_\beta \) forces the translated formula on \( T_{\beta} \) then \( g \) forces the original on \( T_0 \).  Our strategy will be to eliminate quantifiers from the interior of a formula  by replacing \( \psi \in \Csigman[\zeron{\gamma}]{1} \union \Cpin[\zeron{\gamma}]{1}  \) with the \( \Csigman[\zeron{\beta}]{1} \) formula asserting that \( \treemap{\beta}{\gamma}(g_{\beta})=g_\gamma \) forces \( \psi \) when \( \beta \kleeneg \gamma \).

\begin{definition}\label{def:psi-star}
	Given a computable infinitary formula \( \phi \) we define \( \phi^{\beta} \) inductively as follows:

	\begin{align}
		\phi_0 &= \phi \\
		\phi^{\beta+1} &= \begin{cases}
													\exists({\sigma \subset g_{\beta+1}})\left(\theta^{\beta+1}(\sigma) \frc[\beta] \phi^{\beta}\right) & \text{if } \phi^\beta \in \Csigman[\zeron{\beta}]{1} \cup \Cpin[\zeron{\beta}]{1} \\
													\LLor_{i \in \omega} \psi^{\beta+1}_{q(i)} & \text{ otherwise when } \phi=\LLor_{i \in \omega} \psi_{q(i)} \\
													\lnot \psi^{\beta+1} & \text{ otherwise when } \phi=\lnot \psi \\
										\end{cases} \\
	 \shortintertext{For \( \lambda \) a limit}
		\phi^{\lambda} &= \begin{cases}
													\exists({\sigma \subset g_\lambda})\left(\treemap{\lambda}{\gamma}(\sigma) \frc[{\gamma}]  \phi^\gamma\right) &\text{if } \phi \in \Csigman{\gamma} \union \Cpin{\gamma} \text{ for } \gamma \kleenel \lambda \\
													\LLor_{i \in \omega} \psi^{\lambda}_{q(i)} &\text{otherwise when } \phi=\LLor_{i \in \omega} \psi_{q(i)} \\
													\lnot \psi^{\lambda} &\text{otherwise when } \phi=\lnot \psi
										\end{cases}
	\end{align}
\end{definition}

\begin{lemma}\label{lem:psi-star-forcing}
	If \( \sigma \in \pruneTree{T_\beta} \) then \( \sigma \frc[\beta] \phi^{\beta} \)  iff \( \treemap{\beta}{}(\sigma) \frc[0] \phi \).  Moreover, if \( \phi \in \Csigman{\beta+1} \union \Cpin{\beta+1} \)  then \( \phi^{\beta} \in  \Csigman[\zeron{\beta}]{1}   \)
\end{lemma}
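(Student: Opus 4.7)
The plan is a transfinite induction on $\beta$ with an inner induction on the rank of $\phi$. The base $\beta = 0$ is trivial since $\phi^0 = \phi$ and $\treemap{0}{}$ is the identity. The heart of the argument is a \emph{collapse principle}: whenever the $\exists$-translation clause of Definition \ref{def:psi-star} defines $\phi^\beta$ from a $\Csigman[\zeron{\gamma}]{1} \cup \Cpin[\zeron{\gamma}]{1}$ formula $\phi^\gamma$ (with $\gamma = \beta - 1$ in the successor case and some appropriate $\gamma \kleenel \lambda$ in the limit case), the strong forcing $\sigma \frc[\beta] \phi^\beta$ reduces to the single atomic condition $\treemap{\beta}{\gamma}(\sigma) \frc[\gamma] \phi^\gamma$. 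This holds because strong forcing by $\sigma$ of the $\Csigman[\zeron{\beta}]{1}$ sentence $\exists(\tau \subset g_\beta)(\treemap{\beta}{\gamma}(\tau) \frc[\gamma] \phi^\gamma)$ requires an observable witness $\tau \subseteq \sigma$, and both the atomic predicate and the embedding $\treemap{\beta}{\gamma}$ are upward-monotone, so any such witness can be replaced by $\tau = \sigma$.

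Granting this collapse, at a successor $\beta = \gamma + 1$ we chain $\sigma \frc[\gamma+1] \phi^{\gamma+1} \iff \theta^{\gamma+1}(\sigma) \frc[\gamma] \phi^\gamma \iff \treemap{\gamma}{}(\theta^{\gamma+1}(\sigma)) \frc[0] \phi \iff \treemap{\gamma+1}{}(\sigma) \frc[0] \phi$, invoking the outer induction hypothesis at $\gamma$ and the composition identity $\treemap{\gamma+1}{} = \treemap{\gamma}{} \circ \theta^{\gamma+1}$ from Lemma \ref{lem:tower-homeomorphism}. When the $\exists$-rule does not yet apply to $\phi^\gamma$, the definition recurses into the disjuncts or through a negation; the inner induction handles both cases, with the $\lnot$-clause using that $\treemap{\beta}{}$ is an extension-preserving bijection $\pruneTree{T_\beta} \to \pruneTree{T_0}$, so the universal quantifier in strong forcing of a negation transports faithfully. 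The limit case $\beta = \lambda$ runs parallel, using part \ref{lem:tower-homeomorphism:limits} of Lemma \ref{lem:tower-homeomorphism}; the choice of intermediate $\gamma \kleenel \lambda$ witnessing $\phi \in \Csigman{\gamma} \cup \Cpin{\gamma}$ is immaterial, since applying the induction hypothesis at any two such $\gamma$ yields the same level-$0$ forcing fact.

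For the complexity clause, by induction on $\beta$ together with the remark following Definition \ref{def:eager}, the relation $\frc[\gamma]$ restricted to $\Csigman[\zeron{\gamma}]{1} \cup \Cpin[\zeron{\gamma}]{1}$ sentences is computable in $\zeron{\gamma+1}$ for all $\gamma \kleeneleq \beta$, so the single bounded existential quantifier introduced by the $\exists$-rule produces a $\Csigman[\zeron{\beta}]{1}$ formula when applied once to the $\Csigman[\zeron{\beta-1}]{1} \cup \Cpin[\zeron{\beta-1}]{1}$ translation supplied by the inner induction. The main obstacle I anticipate is the limit step: one must check that for $\phi \in \Csigman{\lambda+1} \cup \Cpin{\lambda+1}$ the $\exists$-rule is reached exactly once in the recursive unfolding of $\phi^\lambda$, which requires an intermediate $\gamma \kleenel \lambda$ of the appropriate complexity to be uniformly accessible, and that—combined with the uniformity clauses of Definition \ref{def:downwardly-generic}—the resulting formula stays in $\Csigman[\zeron{\lambda}]{1}$ rather than blowing up through the recursion.
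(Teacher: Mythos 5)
Your proposal matches the paper's argument in all essentials: the paper organizes the same lexicographic induction as a minimal-counterexample argument (least $\gamma$ at which the equivalence fails, witness $\phi$ of least complexity), and the ``collapse principle'' you isolate is exactly the paper's move that a witness $\sigma' \subseteq \sigma$ to $\theta^{\beta+1}(\sigma')\frc[\beta]\phi^\beta$ upgrades to $\theta^{\beta+1}(\sigma)\frc[\beta]\phi^\beta$ by upward monotonicity of forcing, which then chains through the outer hypothesis and $\treemap{\gamma+1}{}=\treemap{\gamma}{}\compfunc\theta^{\gamma+1}$. One small inaccuracy: $\treemap{\beta}{}$ is not a bijection of $\pruneTree{T_\beta}$ onto $\pruneTree{T_0}$ (it is length-increasing and its range is a proper sub-poset); for the negation clause what you actually need, and what the paper implicitly uses, is that it is a monotonic embedding inducing a bijection of paths, so any extension $\tau' \supseteq \treemap{\beta}{}(\sigma)$ in $\pruneTree{T_0}$ is dominated by $\treemap{\beta}{}(\tau)$ for some $\tau \supseteq \sigma$ in $\pruneTree{T_\beta}$, which is enough to transport the universal quantifier. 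Your worry about the limit step is unfounded: the limit clause of Definition \ref{def:psi-star} branches on the complexity of the original $\phi$, not on $\phi^\gamma$, so the $\exists$-rule is applied exactly once and the choice of $\gamma\kleenel\lambda$ is immaterial (any two choices agree via the outer hypothesis), just as the paper treats it.
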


\begin{proof}
Fix \( \gamma \) to be least ordinal for which the equivalence \( \sigma \frc[\gamma] \phi^{\gamma} \iff \treemap{\gamma}{}(\sigma) \frc[0] \phi \) fails  to hold for some \( \phi \) and let \( \phi \) be the witness to this failure of least complexity.  Clearly \( \gamma \neq 0 \) so first suppose \( \gamma=\beta+1 \).
	
First suppose \( \phi^{\beta} \in \Csigman[\zeron{\beta}]{1} \cup \Cpin[\zeron{\beta}]{1} \).  In this case \( \phi^{\beta+1} \) is defined as in the first case above so if \( \sigma \frc[{\beta+1}] \phi^{\beta+1} \) then there must actually be some \( \sigma' \subseteq \sigma \) with \( \theta^{\beta+1}(\sigma') \frc[\beta] \phi^{\beta} \).  By monotonicity \( \theta^{\beta+1}(\sigma) \frc[\beta] \phi^{\beta} \) and  by the inductive hypothesis \( \treemap{\beta}{0}(\theta^{\beta+1}(\sigma)) = \treemap{\beta+1}{}(\sigma) \frc[0] \phi \).  Alternatively suppose that \( \phi^{\beta} \nin \Csigman[\zeron{\beta}]{1} \cup \Cpin[\zeron{\beta}]{1} \).  But if \( \sigma \frc[{\beta+1}] \LLor_{i \in \omega} \psi^{\beta+1}_{q(i)} \) then for some \( i \) we must have \( \sigma \frc[{\beta+1}] \psi^{\beta+1}_{q(i)} \) so by the minimality of \( \phi \) we have \( \treemap{\beta+1}{0}(\sigma) \frc[0] \psi^{\beta+1}_{q(i)} \) and therefore \( \treemap{\beta+1}{}(\sigma) \frc[0] \phi^{\beta+1}  \).   Finally if \(\phi= \lnot \psi \) and \( \sigma \frc[{\beta+1}] \lnot \psi^{\beta+1} \)  then again by the minimality of \( \phi \) no \( \tau \supseteq \treemap{\beta}{}(\sigma) \) in \( \pruneTree{T_0} \) forces \( \psi \) so therefore \( \treemap{\beta}{}(\sigma) \frc[T_0] \lnot \psi=\phi \).  Consequently \( \rightarrow \) can't fail at \( \gamma=\beta+1 \).
	
Going the other way if we suppose that  \( \phi^{\beta} \in \Csigman[\zeron{\beta}]{1} \cup \Cpin[\zeron{\beta}]{1} \) and that \( \treemap{\beta+1}{}(\sigma)=\treemap{\beta}{0}(\theta^{\beta+1}(\sigma)) \frc[0] \phi \) then minimality ensures that \( \theta^{\beta+1}(\sigma) \frc[\beta] \phi^{\beta} \) so by definition \(\sigma \models \phi^{\beta+1}\) holds entailing \( \sigma \frc[{\beta+1}] \phi^{\beta+1}  \).  Alternatively, suppose that \( \treemap{\beta+1}{}(\sigma) \frc[T_0] \phi= \LLor_{i \in \omega} \psi_{q(i)} \) then for some \( i \) we have \( \treemap{\beta+1}{}(\sigma) \frc[T_0] \psi_{q(i)} \) so by minimality of \( \phi \) we infer \( \sigma \frc[{\beta+1}] \psi^{\beta+1}_{q(i)}  \) and therefore \( \sigma \frc[{\beta+1}] \phi^{\beta+1}  \).  Lastly if \( \treemap{\beta+1}{0}(\sigma) \frc[0] \phi= \lnot \psi \) then by other direction no extensions of \( \sigma \) on \( \pruneTree{T_{\beta+1}} \) can force \( \psi^{\beta+1} \) or some extension of \(  \treemap{\beta+1}{0}(\sigma) \) would force \( \psi \) so \( \sigma \) must force \( \phi^{\beta+1} = \lnot \psi^{\beta+1}\). 
	
The proof for limit stages follows by the same considerations and the last claim follows by straightforward induction.
\end{proof}

\begin{lemma}\label{lem:totally-generic}
Every \( g \in [T_0] \) is \(  \alpha \)-generic on \( T_0 \).
\end{lemma}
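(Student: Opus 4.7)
The plan is to combine Lemma \ref{lem:psi-star-forcing}, which translates forcing of \( \phi \) on \( T_0 \) to forcing of \( \phi^{\gamma} \) on \( T_{\gamma} \), with the eager genericity of each \( T_{\gamma} \) in the tower.  Fix \( \phi \in \Csigman{\alpha} \).  I want to exhibit a \( \gamma \kleeneleq \alpha \) so that \( \phi^{\gamma} \in \Csigman[\zeron{\gamma}]{1} \), whereupon eager genericity at \( T_{\gamma} \) supplies a forcing witness that Lemma \ref{lem:psi-star-forcing} transports back to \( T_{0} \).

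When \( \alpha = \gamma + 1 \) is a successor I take \( \gamma \) as given and invoke the moreover clause of Lemma \ref{lem:psi-star-forcing} directly.  When \( \alpha = \lambda \) is a limit I take \( \gamma = \lambda \) and verify \( \phi^{\lambda} \in \Csigman[\zeron{\lambda}]{1} \) by unfolding Definition \ref{def:psi-star}: writing \( \phi = \LLor_{i} \psi_{q(i)} \) with each \( \psi_{q(i)} \) at some level \( \gamma_{i} \kleenel \lambda \), one has \( \phi^{\lambda} = \LLor_{i} \exists(\sigma \subset g_{\lambda})\bigl(\treemap{\lambda}{\gamma_{i}}(\sigma) \frc[{\gamma_{i}}] \psi_{q(i)}^{\gamma_{i}}\bigr) \).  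By Lemma \ref{lem:psi-star-forcing} each inner \( \psi_{q(i)}^{\gamma_{i}} \) is in \( \Csigman[\zeron{\gamma_{i}}]{1} \cup \Cpin[\zeron{\gamma_{i}}]{1} \), and forcing of such formulas at \( T_{\gamma_{i}} \) is \( \zeron{\gamma_{i}+1} \)-decidable (as noted immediately after Definition \ref{def:eager}), so each disjunct is \( \Csigman[\zeron{\lambda}]{1} \) and hence so is their computable union.

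With \( \gamma \) chosen and \( \phi^{\gamma} \in \Csigman[\zeron{\gamma}]{1} \) in hand, consider \( g_{\gamma} \in [T_{\gamma}] \).  By part \ref{def:downwardly-generic:eagerly} of Definition \ref{def:downwardly-generic} the tree \( T_{\gamma} \) is eagerly generic over \( \zeron{\gamma} \), so Definition \ref{def:eager} produces some \( \sigma \subset g_{\gamma} \) super forcing \( \phi^{\gamma} \) or super forcing \( \lnot \phi^{\gamma} \); either way \( \sigma \frc[\gamma] \phi^{\gamma} \) or \( \sigma \frc[\gamma] \lnot \phi^{\gamma} \).  Lemma \ref{lem:psi-star-forcing} then yields \( \treemap{\gamma}{}(\sigma) \frc[0] \phi \) or \( \treemap{\gamma}{}(\sigma) \frc[0] \lnot \phi \), and monotonicity of \( \treemap{\gamma}{} \) guarantees \( \treemap{\gamma}{}(\sigma) \subset g \).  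Thus some initial segment of \( g \) decides \( \phi \), proving \( g \) is \( \alpha \)-generic on \( T_{0} \).

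The main obstacle is confirming the complexity bound on \( \phi^{\gamma} \) at limit stages, since Lemma \ref{lem:psi-star-forcing}'s moreover clause is stated only for successor levels; after that unwinding, the remainder is a direct application of the translation and the eager genericity of the tower.
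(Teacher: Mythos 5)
Your proposal is correct and follows essentially the same route as the paper: translate \( \phi \) via Definition~\ref{def:psi-star}, invoke eager genericity at the top of the tower, and transport the forcing witness back to \( T_0 \) using Lemma~\ref{lem:psi-star-forcing}. The paper's proof is phrased uniformly — it simply works with \( \phi^{\alpha} \) on \( T_{\alpha} \) regardless of whether \( \alpha \) is a successor or a limit, asserting \( \phi^{\alpha} \in \Csigman[\zeron{\alpha}]{1} \) — whereas you split into a successor case (working at level \( \alpha-1 \)) and a limit case (unfolding the definition by hand); your elaboration at limits is a reasonable reaction to the fact that the ``moreover'' clause of Lemma~\ref{lem:psi-star-forcing} is stated only for \( \phi \in \Csigman{\beta+1} \cup \Cpin{\beta+1} \), but the same one-step calculation you perform at limits shows \( \phi^{\alpha} \in \Csigman[\zeron{\alpha}]{1} \) directly in both cases, so the case split, while harmless, is not strictly necessary.
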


\begin{proof}
Fix \( \phi \in \Csigman{\alpha} \).  By lemma \ref{lem:psi-star-forcing} \( \phi^{\alpha} \in \Csigman[\zeron{\alpha}]{1} \) so either \( g_\alpha \) forces \( \phi^{\alpha} \) or \( \lnot \phi^{\alpha} \) as every path through \( T_\alpha \) is eagerly generic so applying lemma \ref{lem:psi-star-forcing} again we conclude that \( g  \) forces either \( \phi \) or it's negation on \( T_0 \). 
 \end{proof}

\subsection{The Construction}

We now demonstrate the existence of a downwardly generic tower of length \( \alpha \).  Our construction will begin with an arbitrary tree \( \hat{T_\alpha} \) computable in \( \zeron{\alpha} \) which we will modify to be an eagerly generic tree \( T_{\alpha} \).  From \( T_{\alpha} \) we will work downward to define \( T_\beta \) for \( \beta \kleenel \alpha \) by way of the following effective process. 

\begin{lemma}\label{lem:tree-minus-one}
	Given trees \( T_{\beta+1} \Tleq \zeron{\beta+1} \) and \( T_{\gamma} \Tleq \zeron{\gamma} \) with \( \enumpred{\beta}=\gamma \) there is a tree \( T_\beta \Tleq \zeron{\beta} \) with and a monotonic function \( \theta^{\beta+1} \) such that
	\begin{arabiclist}
		\item \( T_{\beta}\restr{\copylen{\beta}}=T_{\gamma}\restr{\copylen{\beta}} \).\label{lem:tree-minus-one:copy}
		\item If \( T_{\beta+1}\restr{\copylen{\beta}}=T_{\gamma}\restr{\copylen{\beta}} \) then \( \theta^{\beta+1} \) is a monotonic embedding of \( T_{\beta+1} \) into \( T_\beta \) such that \(  [\theta^{\beta+1}(T_{\beta+1})]= [T_\beta]  \)\label{lem:tree-minus-one:map}
		\item \( T_\beta \) is eagerly generic.\label{lem:tree-minus-one:eagerly}
\end{arabiclist}
		Furthermore indexes for \( T_\beta, \theta^{\beta+1} \) are computable from the indexes for \( T_{\beta+1} \) and \( T_\gamma \) via a function that is total even when passed indexes for \( T_{\beta+1} \) and \( T_\gamma \) that fail to converge on some values.
\end{lemma}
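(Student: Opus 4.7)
The plan is a Jockusch--Soare style eager forcing construction adapted to this setting. Since $T_{\beta+1} \Tleq \zeron{\beta+1} = (\zeron{\beta})'$, the Shoenfield limit lemma supplies a $\zeron{\beta}$-computable approximation $\{T_{\beta+1,s}\}$ stabilizing to $T_{\beta+1}$; using this, I would build $T_\beta \Tleq \zeron{\beta}$ as a padded thinning, and then define $\theta^{\beta+1}$ via $\zeron{\beta+1}$ to map $T_{\beta+1}$ onto the surviving structure of $T_\beta$.

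First set $T_\beta\restr{\copylen{\beta}} = T_\gamma\restr{\copylen{\beta}}$, securing clause \ref{lem:tree-minus-one:copy}. Enumerate the $\Csigman[\zeron{\beta}]{1}$ sentences $\psi_0, \psi_1, \ldots$ computably and extend $T_\beta$ stage by stage. At stage $s$, for each $\sigma \in T_\beta$ at the current boundary and each pending pair $(\sigma, \psi_j)$, ask $\zeron{\beta}$ whether some $\tau \supseteq \sigma$ consistent with $T_{\beta+1,s}$'s branching witnesses $\psi_j$; if yes, insert $\tau$ and its initial segments as a padding chain above $\sigma$, committing infinite extensions to pass through $\tau$, and if no, $\sigma$ already super forces $\lnot \psi_j$ on $T_\beta$. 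Above each padding chain let $T_\beta$ branch in accordance with the current approximation; when the approximation revises, the dropped branches are left as dead ends in $T_\beta$. Standard priority management over pairs $(\sigma, \psi_j)$ yields eager genericity (clause \ref{lem:tree-minus-one:eagerly}) while keeping membership in $T_\beta$ decided uniformly from $\zeron{\beta}$. For $\theta^{\beta+1}$, under the hypothesis of clause \ref{lem:tree-minus-one:map} set it to the identity on the common initial segment; above, recursively send each $\sigma' \in T_{\beta+1}$ through the canonical padding chains in $T_\beta$ to the branching position corresponding to $\sigma'$, yielding a $\zeron{\beta+1}$-computable monotonic map.

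The main obstacle will be verifying $[\theta^{\beta+1}(T_{\beta+1})] = [T_\beta]$: I must show every infinite path of $T_\beta$ arises from a true path of $T_{\beta+1}$ (so approximation-induced dead ends truly die off) and conversely that every true path of $T_{\beta+1}$ extends through the padding to an infinite path of $T_\beta$. Both require that the approximation $T_{\beta+1,s}$ stabilize along true paths and that the padding terminate in finitely many steps at each node, which the priority order ensures. Totality of the index-yielding function is handled by a dovetailing convention: even when the inputs fail to converge on some values, the construction still outputs well-defined indexes for (possibly trivial) $T_\beta$ and $\theta^{\beta+1}$, as stipulated.
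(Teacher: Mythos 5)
Your approach---stagewise approximation to \( T_{\beta+1} \), eager meeting of the \( \Csigman[\zeron{\beta}]{1} \) requirements, finite injury---is in essence the paper's construction, but there are two places where the sketch glosses over something that would break the literal reading. First, when you ``ask \( \zeron{\beta} \) whether some \( \tau \supseteq \sigma \) consistent with \( T_{\beta+1,s} \)'s branching witnesses \( \psi_j \),'' that question is not \( \zeron{\beta} \)-decidable: whether a string super-forces a \( \Csigman[\zeron{\beta}]{1} \) sentence is itself an existential statement, so the additional existential over \( \tau \) is a \( \zeron{\beta+1} \) matter. The paper replaces this decision with an \( s \)-bounded, \( \zeron{\beta} \)-computable search (over \( \REset(\zeron{\beta})[s]{i} \cap T_s \)) and handles the one-sidedness by finite injury, redefining \( \theta_s(\sigma) \) whenever a witness turns up late and resetting the longer parts of \( \theta_s \). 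Your ``if no, \( \sigma \) already super forces \( \lnot \psi_j \)'' is valid only in the limit, not at any finite stage.

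Second, the mechanism that keeps \( T_\beta \) genuinely computable in \( \zeron{\beta} \) (rather than merely c.e.\ in \( \zeron{\beta} \)) is never specified. The paper's device is to decide \( \sigma \in T_\beta \) once and for all at the stage \( s \) equal to the code of \( \sigma \), by checking whether \( \sigma \) is in the range of \( \theta_s \); this unavoidably admits nodes that are later abandoned by \( \theta \) and survive only as dead ends, which is exactly why the equality \( [\theta^{\beta+1}(T_{\beta+1})] = [T_\beta] \) needs the separate argument that every extendable node of \( T_\beta \) lies in \( \rng \theta \) in the limit. You flag the difficulty but absorb it into ``standard priority management,'' whereas it is the heart of the construction. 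Relatedly, the paper builds \( \theta_s \) as the primary object and reads off both \( T_\beta \) and \( \theta^{\beta+1} = \lim_s \theta_s \) from it, which makes the path-correspondence verification substantially cleaner than building \( T_\beta \) first and grafting \( \theta^{\beta+1} \) on afterward as you propose.
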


\begin{proof}
For simplicity we refer to \( T_{\beta+1} \) as \( \hat{T} \), \( \theta^{\beta+1} \) as \( \theta \) and \( T_\beta \) as \( T \).  We fix a \( \zeron{\beta} \) stagewise approximation to \( \hat{T} \) valid in the limit and (implicitly using lemma \ref{lem:convert-to-n-bit}) we set \( T\restr{\copylen{\beta}}=T_\gamma\restr{\copylen{\beta}} \).  We set \( \theta \) to be the identity on \( T\restr{\copylen{\beta}} \) and pause the entire construction at any stage where the approximation to \( \hat{T} \) doesn't agree with \( T\restr{\copylen{\beta}} \).  Thus should the condition \( \hat{T}\restr{\copylen{\beta}}=T_\gamma\restr{\copylen{\beta}} \) fail, our construction eventually shuts down and refuses to produce a useful result.  Thus we've directly satisfied part \ref{lem:tree-minus-one:copy} of the lemma.
	
For \( \sigma \) extending some element in \( T\restr{\copylen{\beta}} \) we define \( \theta(\sigma) \) to be the limit as \( s \) goes to infinity of \( \theta_s(\sigma) \). To ensure \( T \) is computable from \( \zeron{\beta} \) we decide whether \( \sigma \) is in \( T \) at the first stage \( s \) greater than the code of \( \sigma \)  by placing it in \( T \) if it is in the range of \( \theta_s \).  At all times we maintain that if \( \theta_s(\sigma) \) is defined and \( \tau \subseteq \sigma \) then  \( \tau \in \hat{T}_s \) by letting \( \theta_s(\sigma) \) become undefined if when required.  If at stage \( s \) we observe some \( \sigma=\tau\concat[k] \) with code at most \( s \)  to be in \( \hat{T}_s \) and \( \theta_s(\tau) \) is defined but \( \theta_s(\sigma) \) undefined we then also define \( \theta_s(\sigma)=\theta_s(\tau)\concat[\pair{k}{s}] \).

We guarantee that \ref{lem:tree-minus-one:eagerly} of the lemma holds by ensuring that if \( \lh{\sigma}=2i > \copylen{\beta} \) then either \( \theta(\sigma) \) meets \( \REset({\zeron{\beta}}){i} \) or for all \( \tau \in T\) with \( \tau \supset \theta(\sigma) \) \( \tau \) does not meet \( \REset({\zeron{\beta}}){i} \). This is accomplished simply by letting \( \theta_{s+1}(\sigma) \) be redefined to equal any \( \tau \supset \theta(\sigma) \) with \( \tau \in \REset({\zeron{\beta}})[s]{i} \isect T_s \) and reseting all \( \theta_{s+1}(\sigma') \) for \( \sigma' \supsetneq \sigma \) to undefined. Thinking of this as a finite injury argument we note that if \( \sigma \in \hat{T} \) eventually we reach some stage \( t \) so that at any later stage \( s \), \( \sigma \) and all of it's initial segments are members of \( \hat{T}_s \). Furthermore if \( \lh{\sigma} \leq 2k \) then we redefine \( \theta_s(\sigma) \) no more than \( 2^k \) times after stage \( t \) in attempts to meet \ce in \( \zeron{\beta} \) sets. It is therefore clear that eventually \( \theta_s(\sigma) \) will stabilize. Moreover, for the set of extensions of \( \sigma \) in \( T \) to be infinite there must be infinitely many stages \( s \) in which \( \sigma \) was in the range of \( \theta_s \) so if \( \sigma \subset g \in [T] \) then \( \sigma \) is in the range of \( \theta \) thus part \ref{lem:tree-minus-one:map} of the lemma is satisfied.

The uniformity is evident from the proof, but some remarks about why the resulting function is total even when passed partial indexes is warranted.  With respect to  \( T_{\beta+1} \) all that is really necessary is that eventually all members of \( T_{\beta+1} \) stay in the approximation while non-members are out of the approximation at infinitely many stages so being \ce in \( \zeron{\beta+1} \) would suffice.  Since lemma \ref{lem:convert-to-n-bit} only cared about elements being enumerated into the compliment of \( T_{\gamma} \) that index may also be partial.
\end{proof}

Note that the condition \( T_{\beta+1}\restr{\copylen{\beta}}=T_{\gamma}\restr{\copylen{\beta}} \) in the above lemma is guaranteed to be satisfied if \( T_{\beta+1} \) properly copies \( T_{\enumpred{(\beta+1)}} \) by lemma \ref{lem:build-copylen}.  Also remember that should \( \enumpred{\beta} \kleeneg \alpha \) we defined \( T_{\enumpred{\beta}} \) to be another copy of \( T_\alpha \).  Since we only make use of \( T_{\enumpred{\beta}} \) to copy \( T_{\enumpred{\beta}}\restr{\copylen{\beta}} \) we may safely pretend (by redefinition) that \( \enumpred{\beta} = \alpha \) whenever it would otherwise be larger than \( \alpha \).

\begin{lemma}\label{lem:downward-tower-exists}
Given \( T \Tleq \zeron{\alpha} \) there is a downwardly generic tower \( \seq{T_\beta}{\beta \kleeneleq \alpha} \) of length \( \alpha \) with \( T_\alpha\) the effectively given image of \( T \).
\end{lemma}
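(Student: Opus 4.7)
The plan is a downward effective transfinite recursion on $\beta$ from $\alpha$ to $0$, built around iterated applications of lemma \ref{lem:tree-minus-one} and closed up by the recursion theorem.

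For the base step I would run the injury construction from the proof of lemma \ref{lem:tree-minus-one} directly on the given $T$: approximate $T$ stagewise from $\zeron{\alpha}$ and install the redefinitions of $\theta_s$ that meet every $\Csigman[\zeron{\alpha}]{1}$ requirement when possible. There is no $\copylen{\alpha}$-copying constraint to respect at the top (we may take it vacuous), so no second input tree is required here. This produces $T_\alpha \Tleq \zeron{\alpha}$ that is eagerly generic over $\zeron{\alpha}$ with $[T_\alpha]$ homeomorphic to $[T]$.

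For the recursion step, given access to $T_{\beta+1}$ and $T_{\enumpred{\beta}}$ through a putative uniform index $e$, I define $T_\beta$ and $\theta^{\beta+1}$ by feeding the pair $(T_{\beta+1}, T_{\enumpred{\beta}})$ to lemma \ref{lem:tree-minus-one}. In the edge case $\enumpred{\beta} \kleeneg \alpha$, $T_{\enumpred{\beta}}$ is replaced by (a fresh copy of) $T_\alpha$ as sanctioned by the remark following lemma \ref{lem:tree-minus-one}, since only the initial segment $T_{\enumpred{\beta}}\restr{\copylen{\beta}}$ is actually consulted. The recursion theorem then collapses this schema into a single computable function producing $T_\beta$ uniformly from $\beta$ and $\zeron{\beta}$. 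Lemma \ref{lem:tree-minus-one}'s explicit tolerance of partial input indexes is precisely what makes the fixed-point step work: pre-fixed-point, the indexes for $T_{\beta+1}$ and $T_{\enumpred{\beta}}$ cannot be presumed total.

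The six conditions of definition \ref{def:downwardly-generic} are then verified by effective transfinite induction on $\beta$ descending from $\alpha$. Conditions \ref{def:downwardly-generic:beta-computable}, \ref{def:downwardly-generic:map}, \ref{def:downwardly-generic:unifom}, and \ref{def:downwardly-generic:eagerly} drop out of the corresponding clauses of lemma \ref{lem:tree-minus-one} together with the uniformity delivered by the recursion theorem. Condition \ref{def:downwardly-generic:copying} follows by combining part \ref{lem:tree-minus-one:copy} of lemma \ref{lem:tree-minus-one}, which gives $T_\beta\restr{\copylen{\beta}} = T_{\enumpred{\beta}}\restr{\copylen{\beta}}$, with the inductive hypothesis $T_{\beta+1}\restr{\copylen{\beta}} = T_{\enumpred{\beta}}\restr{\copylen{\beta}}$. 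Condition \ref{def:downwardly-generic:lim-unions} for a limit $\lambda$ drops out of condition \ref{def:downwardly-generic:copying} applied at $\beta = \kleenelim{\lambda}{n}$: since $\enumpred{\kleenelim{\lambda}{n}} = \lambda$ and $\copylen{\kleenelim{\lambda}{n}} \to \infty$, the initial segments of $T_\lambda$ are pinned down by those of the $T_{\kleenelim{\lambda}{n}}$, so $T_\lambda$ is literally their limit.

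The main obstacle is maintaining the precondition of lemma \ref{lem:tree-minus-one} inductively across the tower, and in particular across limit notations $\beta$ where $\enumpred{\beta+1}$ and $\enumpred{\beta}$ may differ and where $\copylen{\beta+1}$ carries no obvious information about $\copylen{\beta}$. The remark immediately after lemma \ref{lem:tree-minus-one} outsources precisely this obligation to lemma \ref{lem:build-copylen} in the appendix, and any careful write-up has to appeal to that lemma's arrangement of $\copylen{\cdot}$ and $\enumpred{\cdot}$ to close the induction.
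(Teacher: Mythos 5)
Your plan follows the paper's own proof essentially step for step: eagerly-genericize $T$ to get $T_\alpha$, feed $(T_{\beta+1}, T_{\enumpred{\beta}})$ to lemma \ref{lem:tree-minus-one}, close under the recursion theorem using that lemma's totality on partial indexes, and verify the conditions downward by well-foundedness, deferring the maintenance of the copying precondition and the limit condition to lemma \ref{lem:build-copylen}. The only cosmetic difference is that the paper first reduces to the case $\alpha$ a limit by applying lemma \ref{lem:tree-minus-one} finitely often, whereas you dispose of the same edge case directly via the remark that lets $T_{\enumpred{\beta}}$ be a copy of $T_\alpha$; your write-up is also somewhat more explicit about checking each clause of definition \ref{def:downwardly-generic}.
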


\begin{proof}
By the same argument given in lemma \ref{lem:tree-minus-one} we can effectively transform \( T \) into an eagerly generic \( T_\alpha \Tleq \zeron{\alpha} \).  Furthermore we may assume that \( \alpha \) is a limit ordinal during construction by applying lemma  \ref{lem:tree-minus-one} finitely many times until we reached a limit level.   Now fix an index \( i \) for \( T_\alpha \) and let \( J(\beta,j,j') \) be the computable function giving an index for \( T_\beta \) given an index \( j \) for \( T_{\beta+1} \) and \( j' \) for \( T_{\enumpred{\beta}} \).  We now define a computable function \( I(e) \) to behave as follows with the intent that \( \recfnl{I(e)}{}{} \) should define a function from notations \( \beta \kleeneleq \gamma \) to an index for \( T_\beta \) whenever \( \recfnl{e}{}{} \) defines the same function on \( \beta \kleenel \gamma \).
	\begin{equation}
		\recfnl{I(e)}{}{\beta} = \begin{cases}
																\diverge & \text{ unless } \beta \kleeneleq \alpha \\
																 i & \text{ if } \beta = \alpha\\
																 J(\beta,\recfnl{e}{}{\beta+1},\recfnl{e}{}{\enumpred{\beta}} ) & \text{ otherwise}
		\end{cases}
	\end{equation}

Fix \( e \) to be a fixed point of \( I(e) \) and let \( T_\beta \) be the tree defined by index \( \recfnl{e}{}{\beta} \) relative to \( \zeron{\beta} \) for \( \beta \kleenel \alpha \).  Note that it is enough to show that \( T_\beta \) is built as per lemma \ref{lem:tree-minus-one} from \( T_{\beta+1} \) and \( T_{\enumpred{\beta}} \) since lemma \ref{lem:build-copylen} ensures that if \( T_{\enumpred{\beta}}\restr{\copylen{\beta}}=T_\beta\restr{\copylen{\beta}} \) then \( T_{\beta+1}\restr{\copylen{\beta}}=T_\beta\restr{\copylen{\beta}} \) and as well as that \( T_\lambda \) for \( \lambda \) a limit is the limit of \( T_\beta \) with \( \beta \kleenel \lambda \)  \ref{def:downwardly-generic}.  
	
Now suppose that \( T_\beta \) fails to be defined or satisfy the conditions of lemma \ref{lem:tree-minus-one} with respect to \( T_{\beta+1} \) and \( T_{\enumpred{\beta}} \).  Since there are no infinite decreasing sequences of ordinals we can assume that \( T_{\enumpred{\beta}} = \REset(\zeron{\enumpred{\beta}}){q} \) where \( q = \recfnl{e}{}{\enumpred{beta}} \) is defined and satisfies the conclusions of lemma \ref{lem:tree-minus-one}.  Thus if \( r=\recfnl{e}{}{\beta} \)  by the choice of \( e \) as a fixed point we also have \( r=J(\beta,\recfnl{e}{}{\beta+1},\recfnl{e}{}{\enumpred{\beta}} \) so \( T_\beta \) is defined by the application of lemma \ref{lem:tree-minus-one}.  Note that the work here is really being done by lemma \ref{lem:tower-homeomorphism} which verified that merely being the image of a monotonic function and the properties of the function \( \copylen{\beta} \) ensure that all trees in the tower are homeomorphic.
\end{proof}

This completes the proof of theorem \ref{thm:harrington-mcl-ordinal}.  At this point it is interesting to note that this is in some sense optimal since every member of a countable hyperarithmetic class \( A \subset \baire \) is itself a hyperarithmetic singleton.

\begin{definition}\label{def:alpha-root}
	Say \( T \) is the \( \alpha \)-reduct of \( \hat{T} \) if \( T=T_0 \) where \( T_0 \) is constructed as described above from  \( \hat{T} \Tleq \zeron{\alpha} \).  Also we call those  \( g \in [T_0] \) an \( \alpha \) root of \( \hat{g} \in [\hat{T}] \) if \( g \) is the image of \( \hat{g} \) under the constructed homomorphism.
\end{definition}

Note that an index for the \( \alpha \)-reduct of \( \hat{T} \) as a computable set can be effectively computed from a  index for \( \hat{T}  \) as a \(  \zeron{\alpha} \) computable set.

\subsection{Consequences}\label{sec:consequences}

Harrington observed several other important consequences of the above method in \cite{mclaughlins-conjecture} that have also never been formally published and we take the time to present those that can be stated in terms of classical computability theory here and leave those about admissible sets and various implications in second order arithmetic to another paper.

\begin{definition}\label{def:subgeneric}
	Following Harrington \cite{mclaughlins-conjecture} we say a degree \( \Tdeg{d} \in \baire \)  is \( \alpha \) subgeneric for \( \alpha \in \kleeneO \) if for all \( \beta \kleenel \alpha \) \( \Tdeg{d} \) satisfies both
	\begin{arabiclist}
		\item \(\displaystyle  \jumpn{\Tdeg{d}}{\beta} \Tequiv \Tdeg{d} \Tjoin \zeron{\beta} \)\label{def:subgeneric:join-jump}
		\item \(\displaystyle  \forall[X]\left( X \Tleq \zeron{\alpha} \land X \Tleq \jumpn{\Tdeg{d}}{\beta} \implies X \Tleq \zeron{\beta} \right) \)\label{def:subgeneric:meet-zeron-beta}
	\end{arabiclist}
\end{definition}

A version of Harrington's first corollary in  \cite{mclaughlins-conjecture}  can now be stated.

\begin{corollary}[Harrington \cite{mclaughlins-conjecture}]\label{cor:subgeneric}
	For each \( \alpha < \wck \) there is a sequence \( \seq{g_n \in \baire}{n \in \omega} \) so that for all \( n \in \omega \)
	\begin{arabiclist}
		\item \( \displaystyle \Tdeg{g_n}  \) is \( \alpha \) subgeneric.
		\item \(\displaystyle  g_n \nTleq \left(\TPlus_{i \neq n} \jumpn{g_i}{\alpha} \right) \)\label{cor:subgeneric:alpha-independence}.
		\item \( g_n \) is a the unique solution of a \( \pizn{1} \) formula the index for which is given uniformly in \( n \).
	\end{arabiclist}
\end{corollary}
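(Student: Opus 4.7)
The plan is to obtain each \( g_n \) as the \( \alpha \)-root (Definition \ref{def:alpha-root}) of a real \( \hat{g}_n \) chosen in advance to enforce the independence requirement \ref{cor:subgeneric:alpha-independence} relative to \( \zeron{\alpha} \). More precisely, I would first produce a uniform sequence of trees \( \hat{T}_n \Tleq \zeron{\alpha} \) (or, if convenient, \( \zeron{\alpha+1} \)), each with a unique path \( \hat{g}_n \), arranged so that \( \hat{g}_n \nTleq \left(\TPlus_{i \neq n}\hat{g}_i\right) \Tjoin \zeron{\alpha} \), and then apply Lemma \ref{lem:downward-tower-exists} uniformly in \( n \) to obtain computable trees \( T^{(n)} \) together with \( \zeron{\alpha} \)-computable homeomorphisms \( \treemap{\alpha}{}^{(n)} \). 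Since homeomorphisms preserve cardinality of the path set, each \( T^{(n)} \) has a unique path \( g_n \), so \( g_n \) is a \( \pizn{1} \) singleton with index uniform in \( n \), giving clause (3); and the two-way computability of \( \treemap{\alpha}{}^{(n)} \) yields \( g_n \Tjoin \zeron{\alpha} \Tequiv \hat{g}_n \Tjoin \zeron{\alpha} \) uniformly in \( n \), a fact I will use to shuttle independence back and forth between the two sides of the reduction.

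To construct the \( \hat{T}_n \) I would relativize the standard Kleene--Post style construction of a uniform family of independent \( \pizn{1} \)-singletons to \( \zeron{\alpha} \). Requirements \( R_{n,e} \) preventing \( \Phi_e^{\left(\TPlus_{i \neq n} \hat{g}_i\right) \Tjoin \zeron{\alpha}} = \hat{g}_n \) are met at each stage by searching, at \( \Csigman[\zeron{\alpha}]{1} \) cost, for finite extensions of the current column commitments that produce a disagreement, and the limits of the stagewise commitments are trees with unique paths. Since the bookkeeping sits at the \( \zeron{\alpha+1} \) level I would in fact invoke Lemma \ref{lem:downward-tower-exists} with \( \alpha+1 \) in place of \( \alpha \), using that \( (\alpha+1) \)-subgenericity strengthens \( \alpha \)-subgenericity and that independence over \( \zeron{\alpha} \) is at least as strong as independence over \( \zeron{\alpha+1} \), so this substitution is harmless.

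For clauses (1) and (2) I would appeal to Theorem \ref{thm:harrington-mcl-ordinal}: each \( g_n \) is \( \alpha \)-generic on the computable tree \( T^{(n)} \). A transfinite forcing calculation --- the local-forcing analogue on \( T^{(n)} \) of Jockusch's 1-generic jump-inversion result, carried out inductively on \( \beta \kleenel \alpha \) using the translation \( \phi \mapsto \phi^\beta \) of Definition \ref{def:psi-star} together with Lemma \ref{lem:psi-star-forcing} --- delivers both the jump collapse \( \jumpn{g_n}{\beta} \Tequiv g_n \Tjoin \zeron{\beta} \) of clause \ref{def:subgeneric:join-jump} and the meet clause \ref{def:subgeneric:meet-zeron-beta}, establishing that \( \Tdeg{g_n} \) is \( \alpha \)-subgeneric. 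Clause (2) then follows by translating through the subgenericity collapse: \( \jumpn{g_n}{\alpha} \Tequiv g_n \Tjoin \zeron{\alpha} \Tequiv \hat{g}_n \Tjoin \zeron{\alpha} \), so a hypothetical reduction \( g_n \Tleq \TPlus_{i \neq n} \jumpn{g_i}{\alpha} \) would yield \( \hat{g}_n \Tleq \left(\TPlus_{i \neq n} \hat{g}_i\right) \Tjoin \zeron{\alpha} \), contradicting the Kleene--Post arrangement of the first step.

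The main obstacle will be the forcing argument promoting \( \alpha \)-genericity on a computable tree to the full \( \alpha \)-subgeneric package; the jump-collapse clause is a fairly routine unfolding of \( \phi \mapsto \phi^\beta \), but the meet-with-\( \zeron{\alpha} \) clause \ref{def:subgeneric:meet-zeron-beta} requires a delicate argument showing that any \( \zeron{\alpha} \)-computable set that happens to be computable from \( \jumpn{g_n}{\beta} \) is in fact already \( \zeron{\beta} \)-computable, and this must be extracted from the interaction of strong forcing on \( T^{(n)} \) with the translation at level \( \alpha \) rather than borrowed directly from classical generic-real theorems. A secondary concern is maintaining full uniformity of the \( \pizn{1} \) indices through both the Kleene--Post layer and the recursion-theorem-based application of Lemma \ref{lem:downward-tower-exists} so that a single computable function produces the witnessing index for every \( n \).
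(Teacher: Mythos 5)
Your overall architecture matches the paper's: build a uniform family of $\zeron{\alpha}$-computable trees $\hat{T}_n$ with unique paths $\hat{g}_n$ satisfying the Kleene--Post independence $\hat{g}_n \nTleq \bigl(\TPlus_{i \neq n}\hat{g}_i\bigr) \Tjoin \zeron{\alpha}$ (this is Lemma \ref{lem:uniform-alpha-independence}), push each through the tower construction to get computable $\pizn{1}$-singleton trees, and shuttle independence back and forth through the uniform equivalence $g_n \Tjoin \zeron{\alpha} \Tequiv \hat{g}_n \Tjoin \zeron{\alpha}$.

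However there is a genuine gap, and it is exactly the one you flag as your ``main obstacle'' without resolving. You invoke Lemma \ref{lem:downward-tower-exists} directly to get $\alpha$-generic paths and then hope that a ``delicate'' forcing calculation extracts the meet clause \ref{def:subgeneric:meet-zeron-beta} of $\alpha$-subgenericity from $\alpha$-genericity on $T^{(n)}$. But the paper explicitly observes that this cannot be done: the relevant disagreement facts are super-forced on $T_{\beta+1}$, and ``since $T_\beta$ isn't the image of $T_{\beta+1}$ under $\theta^{\beta+1}$ super forcing on $T_{\beta+1}$ doesn't translate to super forcing on $T_\beta$.'' Genericity on the final tree simply does not know that the intermediate trees were pruned in a way that preserves a disagreement witness. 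The fix is not a cleverer translation of formulas $\phi \mapsto \phi^\beta$ but a modification of the tower construction itself: one needs \emph{padded} trees and \emph{disagreement-preserving} maps $\theta^{\beta+1}$ (Definitions \ref{def:padded-tree}, \ref{def:disagreement-eager}, \ref{def:subgeneric-root}), built via Lemma \ref{lem:tree-minus-one-alt1}, and then Lemma \ref{lem:subgeneric-root} shows these subgeneric-roots satisfy the meet clause. Without that modification your clause (1) argument stalls exactly where you expected it might.

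Two minor points. First, your worry that the Kleene--Post bookkeeping forces you up to $\zeron{\alpha+1}$ is unfounded: the finite-injury construction in Lemma \ref{lem:uniform-alpha-independence} produces trees $T^i_\alpha$ that are $\zeron{\alpha}$-computable (the path $\hat{g}_i$ is a limit, but the tree is not), so no replacement of $\alpha$ by $\alpha+1$ is needed. Second, the homeomorphism gives you the unique-path and $\pizn{1}$-singleton conclusions as you say, but deriving the jump-collapse clause \ref{def:subgeneric:join-jump} is already handled by Lemma \ref{lem:alpha-subgeneric-part-1} via the machinery of Lemma \ref{lem:psi-star-forcing} and eager genericity; you do not need a separate ``1-generic jump-inversion analogue.''
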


Our first task is to assure ourselves we already know how to satisfy part \ref{def:subgeneric:join-jump} of definition \ref{def:subgeneric}.  

\begin{lemma}\label{lem:alpha-subgeneric-part-1}
If \( g \) is an \( \alpha \) root then \( g \) satisfies part \ref{def:subgeneric:join-jump} of the \hyperref[def:subgeneric]{definition of \( \alpha \)-subgeneric}.
\end{lemma}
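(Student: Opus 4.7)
The direction $g\Tjoin\zeron{\beta}\Tleq\jumpn{g}{\beta}$ is immediate and uniform, so the content is the other inequality. The plan is an induction on $\beta\kleeneleq\alpha$, using lemmas \ref{lem:tower-homeomorphism}, \ref{lem:psi-star-forcing} and \ref{lem:totally-generic} on the tower produced by lemma \ref{lem:downward-tower-exists}. A key preliminary observation is that the map $\treemap{\gamma}{0}$ is a $\zeron{\gamma}$-computable monotonic bijection onto $T_0$, so the preimage $g_\gamma$ of $g\in [T_0]$ can be computed uniformly from $g\Tjoin\zeron{\gamma}$ just by searching, for each $n$, for the unique $\sigma\in T_\gamma$ of length at least $n$ with $\treemap{\gamma}{0}(\sigma)\subset g$.

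\textbf{Successor step.} Suppose $\beta=\gamma+1\kleeneleq\alpha$ and we wish to decide a $\Csigman[g]{\gamma+1}$ question about $g$ using $g\Tjoin\zeron{\gamma+1}$. Treating $g$ as a generic, each such question is $g\frc[0]\phi$ for some $\phi\in\Csigman{\gamma+1}$, and by lemma \ref{lem:totally-generic} $g$ forces either $\phi$ or $\lnot\phi$, which (strong forcing) coincides with truth at $g$. By lemma \ref{lem:psi-star-forcing} this is equivalent to $g_\gamma\frc[\gamma]\phi^{\gamma}$ with $\phi^\gamma\in\Csigman[\zeron{\gamma}]{1}$. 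Because $T_\gamma$ is eagerly generic over $\zeron{\gamma}$, some $\sigma\subset g_\gamma$ either super-forces $\phi^\gamma$ or super-forces $\lnot\phi^\gamma$ on $T_\gamma$. The first condition on $\sigma$ is $\Csigman[\zeron{\gamma}]{1}$, and the second is $\Cpin[\zeron{\gamma}]{1}$ since the universal quantifier ranges only over $T_\gamma\Tleq\zeron{\gamma}$ (not $\pruneTree{T_\gamma}$)---this is exactly why the paper introduced super forcing. Both are therefore uniformly decidable in $\zeron{\gamma+1}$, so walking down the initial segments of $g_\gamma$ (computed from $g\Tjoin\zeron{\gamma}$) and dovetailing the two checks locates a $\sigma$ that decides $\phi$, and the answer can be read off.

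\textbf{Limit step.} If $\beta=\lambda$ is a limit, then $\jumpn{g}{\lambda}$ is (up to Turing equivalence) the effective join of $\jumpn{g}{\gamma}$ for $\gamma\kleenel\lambda$, and $\zeron{\lambda}$ is similarly the effective join of the $\zeron{\gamma}$. The inductive hypothesis supplies a uniform-in-$\gamma$ reduction $\jumpn{g}{\gamma}\Tleq g\Tjoin\zeron{\gamma}$, and concatenating these reductions column-by-column gives $\jumpn{g}{\lambda}\Tleq g\Tjoin\zeron{\lambda}$. The entire induction can be formalized by effective transfinite recursion in the style of lemmas \ref{lem:tower-homeomorphism} and \ref{lem:downward-tower-exists}, yielding a single index working uniformly for all $\beta\kleeneleq\alpha$, which is what is required for the later corollaries.

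\textbf{Main obstacle.} The delicate point is the complexity bookkeeping in the successor step: one must keep the witness search uniformly $\Sigma_1^{\zeron{\gamma}}$ on one side and $\Pi_1^{\zeron{\gamma}}$ on the other. Ordinary local forcing on $\pruneTree{T_\gamma}$ would introduce an extra $\zeron{\gamma+1}$-quantifier into the $\lnot\phi^\gamma$ side (since $\pruneTree{T_\gamma}$ is only $\Pi_1^{\zeron{\gamma+1}}$), pushing the decidability up to $\zeron{\gamma+2}$ and breaking the jump-by-one reduction. The role of eagerness, and of super forcing's ranging over all of $T_\gamma$, is precisely to keep the complexity within $\zeron{\gamma+1}=\zeron{\beta}$, which is what the $\alpha$-subgenericity of $g$ demands.
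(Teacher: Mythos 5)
Your proof is correct and uses the same core idea as the paper (translate the formula down the tower to a $\Csigman[\zeron{\cdot}]{1}$ formula, appeal to eager genericity, and decide by searching through initial segments of the preimage of $g$), but the paper organizes it more directly. The paper works at level $\lambda=\beta$ itself rather than one level below: for any $\lambda$ it translates both the $\Csigman{\lambda}$ membership formula $\psi$ and its $\Cpin{\lambda}$ companion to formulas $\psi^\lambda$ and $(\lnot\psi)^\lambda$, each of which lands in $\Csigman[\zeron{\lambda}]{1}$ (in the limit case one appeals to $\jumpn{g}{\lambda}$ being $\Delta^0_\lambda$ in $g$), so one runs two $\Sigma^{\zeron{\lambda}\Tplus g}_1$ searches through initial segments of $g_\lambda$ and returns whichever halts; this avoids the explicit successor/limit split and the effective-join argument. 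Your version, which stays at level $\gamma$ with $\beta=\gamma+1$ and decides super-forcing with $\zeron{\gamma+1}$, together with the column-by-column join at limits, is a valid (if slightly longer) alternative and makes the uniformity needed for corollary \ref{cor:subgeneric} more visible. One small inaccuracy in your closing remark: $\pruneTree{T_\gamma}$ for an arbitrary $\zeron{\gamma}$-computable tree is $\Sigma^1_1(\zeron{\gamma})$ rather than $\Pi_1^{\zeron{\gamma+1}}$; the moral is unchanged (ordinary local forcing is too expensive), but the reason super forcing is the right notion is that it quantifies over the $\zeron{\gamma}$-computable $T_\gamma$, not because $\pruneTree{T_\gamma}$ is merely one jump more complex.
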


\begin{proof}
Fix \( \seq{T_\beta}{\beta \kleeneleq \alpha} \) witnessing that \( g \) is an \( \alpha \) root and  \( \lambda \kleeneleq \alpha \) and \( \psi(x) \in \Csigman{\lambda} \) such that \( x \in \jumpn{g}{\lambda} \iff g \models \psi \).   By lemma \ref{lem:psi-star-forcing}  if \( g \in [T_0] \) then \( g \models \psi \iff g_{\lambda} \frc[\lambda] \psi^\lambda \).   Since either \( \psi^\lambda \) or it's negation is super forced on \( T_\lambda \Tleq \zeron{\lambda}  \) by \( g^\lambda \Tleq \zeron{\lambda} \Tjoin g \) we can compute \( \jumpn{g}{\lambda} \) from \( \zeron{\lambda} \Tjoin g \) completing the proof.
\end{proof}

Building \( g \) as an \( \alpha \) root that also satisfies part \ref{def:subgeneric:meet-zeron-beta} of the \hyperref[def:subgeneric]{definition of \( \alpha \)-subgeneric} requires slightly more work.  Given \( X \Tleq \zeron{\beta+1} \) and \( X \Tleq \zeron{\beta} \Tjoin g \) these computations must be super forced on \( T_{\beta+1} \) to be equal but we need to guarantee they are super forced to agree on \( T_\beta \) to ensure \( X \Tleq \zeron{\beta} \).  Since \( T_\beta \) isn't the image of \( T_{\beta+1} \) under \( \theta^{\beta+1} \) super forcing on \( T_{\beta+1} \) doesn't translate to super forcing on \( T_\beta \) so we must guarantee this occurs manually.  Since \( T_\beta \) lacks access to \( \zeron{\beta+1} \) we can't directly diagonalize but must instead try to preserve disagreeing options for the computation of \( X \) from \( \zeron{\beta} \Tjoin g \) and let the diagonalization occur on \( T_{\beta+1} \).

We first must ensure that \( T_{\beta+1} \) leaves options open that can be extended on \( T_\beta \) to incompatible computations.

\begin{definition}\label{def:padded-tree}
Say \( T_\beta \) is padded if whenever \( \sigma \in T_\beta \), \( \lh{\sigma} = 0 \mod 2 \)  then \( \sigma\concat[0], \sigma\concat[1]  \in T_\beta \).
\end{definition}

And now give conditions that ensure these incompatible computations exist.

\begin{definition}\label{def:disagreement-eager}
Say that \( \theta^{\beta+1}\map{T_{\beta+1}}{T_\beta} \) is disagreement preserving if whenever \( \sigma\concat[0], \sigma\concat[1] \in T_{\beta+1} \) and \( \lh{\sigma}=2i \geq \copylen{\beta} \) then either
	\begin{align*}
		& \forall[\tau, \tau' \supset \theta^{\beta+1}(\sigma)]( \recfnl{i}{\zeron{\beta}}{\tau} \compat \recfnl{i}{\zeron{\beta}}{\tau'} )\\
		\shortintertext{Or}
		& \recfnl{i}{\zeron{\beta}}{\theta^{\beta+1}(\sigma\concat[0])} \incompat \recfnl{i}{\zeron{\beta}}{\theta^{\beta+1}(\sigma\concat[1])}
	\end{align*}
\end{definition}

\begin{definition}\label{def:subgeneric-root}
Say that \( \seq{T_\beta}{\beta \kleeneleq \alpha} \) is a disagreement preserving downwardly generic tower if it is a downwardly generic tower  and for each \( \beta \) with \( \beta   \kleeneleq \alpha \), \( T_{\beta} \) is padded and \( \theta^{\beta+1} \) is disagreement preserving.  We define the notions of an \( \alpha \) subgeneric-reduct and \( \alpha \) subgeneric-root by modifying definition \ref{def:alpha-root} to use disagreement preserving downwardly generic towers.
\end{definition}

Note that we can produce \( \alpha \) subgeneric-reducts with the same degree of effectivity as we enjoyed for \( \alpha \) reducts.  

\begin{lemma}\label{lem:subgeneric-root}
If \( g \) is a \( \alpha \) subgeneric-root then \( g \) is \( \alpha \) subgeneric.
\end{lemma}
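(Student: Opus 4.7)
Part 1 of definition \ref{def:subgeneric} is immediate: a disagreement preserving downwardly generic tower is in particular a downwardly generic tower, so lemma \ref{lem:alpha-subgeneric-part-1} applies verbatim and gives \( \jumpn{g}{\beta} \Tequiv g \Tjoin \zeron{\beta} \) for every \( \beta \kleenel \alpha \). The real content of the lemma is part 2.

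For part 2, fix \( \beta \kleenel \alpha \) and \( X \) with \( X \Tleq \zeron{\alpha} \) and \( X \Tleq \jumpn{g}{\beta} \); the goal is \( X \Tleq \zeron{\beta} \). Using part 1 we have \( \jumpn{g}{\beta} \Tequiv g \Tjoin \zeron{\beta} \), and since \( g = \treemap{\beta}{0}(g_\beta) \) with \( \treemap{\beta}{0} \Tleq \zeron{\beta} \), we may absorb both the translation and the reduction from \( \jumpn{g}{\beta} \) into a single functional and write \( X = \recfnl{i}{\zeron{\beta}}{g_\beta} \). By padding the index we arrange \( 2i \geq \copylen{\beta} \) so that the disagreement preserving hypothesis of definition \ref{def:disagreement-eager} becomes available at level \( 2i \). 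The proposed reduction of \( X \) to \( \zeron{\beta} \) is the obvious search: for each \( n \), enumerate \( \sigma \in T_\beta \) (computable from \( \zeron{\beta} \)) and output the first \( k \) such that \( \recfnl{i}{\zeron{\beta}}{\sigma}(n) \conv k \).

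Correctness of this search reduces to showing that all convergent values are pairwise compatible. Suppose towards contradiction that \( \sigma_1, \sigma_2 \in T_\beta \) yield incompatible values at some position; extend them so they lie in \( \pruneTree{T_\beta} \) and lift through \( \theta^{\beta+1} \) (using the homeomorphism of lemma \ref{lem:tower-homeomorphism}) to obtain \( \tau_1, \tau_2 \in T_{\beta+1} \) with \( \theta^{\beta+1}(\tau_j) \supseteq \sigma_j \). Let \( \tau \) be their greatest common initial segment, so that \( \tau\concat[0], \tau\concat[1] \in T_{\beta+1} \) automatically, and after a further padding of \( i \) we may assume \( \lh{\tau} = 2i \geq \copylen{\beta} \). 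Option (a) of disagreement preservation — every pair of extensions of \( \theta^{\beta+1}(\tau) \) in \( T_\beta \) is \( \compat \) under \( \recfnl{i}{\zeron{\beta}}{} \) — is refuted by \( \sigma_1, \sigma_2 \), so option (b) must apply, giving the explicit incompatibility \( \recfnl{i}{\zeron{\beta}}{\theta^{\beta+1}(\tau\concat[0])} \incompat \recfnl{i}{\zeron{\beta}}{\theta^{\beta+1}(\tau\concat[1])} \).

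The main obstacle is converting this structural witness into a contradiction with \( X \Tleq \zeron{\alpha} \). The intended use is that the child chosen by \( g_{\beta+1} \) at \( \tau \) is pinned down by checking which of the two incompatible outputs agrees with \( X \) at the position of disagreement; since \( X \Tleq \zeron{\alpha} \), that choice is \( \zeron{\alpha} \)-computable, and iterating over indices (each attached to its own branching level \( 2i \)) should reconstruct enough of \( g_{\beta+1} \), hence of \( g \), from \( \zeron{\alpha} \) to contradict the \( \alpha \)-genericity of \( g \) on \( T_0 \) delivered by lemma \ref{lem:totally-generic}. The delicate part is coordinating the padding across indices — so that every branching relevant to a potential disagreement is caught by the disagreement preserving condition for the appropriate functional — and identifying the exact \( \Csigman[\zeron{\alpha}]{1} \) reconstruction statement that collides with the genericity of \( g \); this bookkeeping, rather than any single conceptual step, is where the technical weight of the lemma sits.
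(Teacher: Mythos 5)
Your first two paragraphs are on the right track (the part~1 reduction to lemma~\ref{lem:alpha-subgeneric-part-1}, and the ambition to reduce $X$ to $\zeron{\beta}$ by a search through $T_\beta$), but the correctness argument has a genuine gap, and the closing ``conversion'' step does not close it. First, the claim that \emph{all} convergent values $\recfnl{i}{\zeron{\beta}}{\sigma}(n)$ for $\sigma \in T_\beta$ are pairwise compatible is false in general: condition~(b) of definition~\ref{def:disagreement-eager} is specifically designed to \emph{preserve} incompatibilities, so a disagreement-preserving tower will typically contain nodes witnessing incompatible computations along dead branches of $T_\beta$. An unrestricted search through $T_\beta$ can therefore return a wrong value. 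The paper's argument instead restricts the search to extensions of a carefully chosen node $\tau = \theta^{\gamma+1}(\sigma')$ on the actual path $g_\gamma$, and the content of the argument is that above this $\tau$ all computations \emph{must} be compatible. Second, the padding move ``after a further padding of $i$ we may assume $\lh{\tau} = 2i$'' cannot be carried out: $\tau$ is the greatest common initial segment of two fixed strings $\tau_1, \tau_2$, so its length is not something you get to choose. The paper circumvents this by choosing the branching node \emph{first} (as $\sigma' = g_{\gamma+1}\restr{2j'}$ for a padded index $j'$) and only then invoking the disagreement-preservation condition at that level.

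The missing ingredient is the eagerly generic (super forcing) property from definition~\ref{def:eager}, which your proposal never uses. In the paper's proof, the $\Csigman[\zeron{\gamma+1}]{1}$ disagreement statement $\psi$ is super forced false by some $\sigma \subset g_{\gamma+1}$, and because super forcing on $T_{\gamma+1}$ quantifies over \emph{all} nodes of the tree (not just $\pruneTree{T_{\gamma+1}}$), this rules out option~(b) at the padded node $\sigma' \supset \sigma$; option~(a) then gives compatibility above $\theta^{\gamma+1}(\sigma')$. Your proposed alternative---deriving a contradiction by ``reconstructing $g$ from $\zeron{\alpha}$, contradicting $\alpha$-genericity''---does not work: $\alpha$-genericity of $g$ on $T_0$ in the paper's sense is perfectly compatible with $g$ being $\zeron{\alpha}$-computable, and indeed in the intended application ($\pizn{1}$ singletons) $g$ \emph{is} $\zeron{\alpha}$-computable via $\treemap{\alpha}{0}$ and the unique path through $T_\alpha$, so computing $g$ from $\zeron{\alpha}$ is not absurd. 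Finally, you say nothing about how a limit $\beta$ would be handled; the paper's argument there depends essentially on taking $\beta$ to be the \emph{least} level at which part~\ref{def:subgeneric:meet-zeron-beta} fails and then exhibiting $X \Tleq \zeron{\kappa}$ for some $\kappa \kleenel \beta$, a minimality framework your proposal does not set up.
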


\begin{proof}
By lemma \ref{lem:alpha-subgeneric-part-1} it is sufficient to show that \( g \) satisfies part \ref{def:subgeneric:meet-zeron-beta} of definition \ref{def:subgeneric}.   Suppose, for a contradiction, that \( g \) fails this condition for the set \( X \) and let \( \beta \kleeneleq \alpha \) be the least such that \( X \Tleq \zeron{\beta} \) and for some \( \gamma \kleenel \beta \) we have \(  X \Tleq \zeron{\gamma} \Tjoin g \) but \( X \nTleq \zeron{\gamma} \).  Assume \( \beta \) is a successor then we must have \( \gamma +1 =\beta \) or \( \beta \) would not have been the least failure.  Now fix   \( e, j \) such that 
	\begin{equation}\label{lem:alpha-subgeneric:eq-counter}
	\begin{aligned}
		X &= \recfnl{e}{\zeron{\beta}}{}\\
		X &= \recfnl{j}{\zeron{\gamma} \Tplus g_{\gamma} }{}
	\end{aligned}
	\end{equation}
Let \( \psi \) be the   \( \Csigman[\zeron{\gamma +1}]{1} \) formula asserting that these computations disagree.  Since the computations agree we have \( g_{\gamma+1} \models \lnot \psi \)  so pick \( \sigma \subset g_{\gamma +1} \) such that \( 	\sigma \) super forces \( \lnot\psi \) on \( T_{\gamma+1} \) where

	\begin{equation}
		\psi = \exists({\sigma \subseteq g_{\gamma+1}})\exists({x,s})\left(\recfnl{e}{\zeron{\gamma +1}}{x}\conv[s] \neq \recfnl{j}{\zeron{\gamma} \Tplus \theta^{\beta}(\sigma) }{x}\conv[s]\right)
	\end{equation}
	
We now work to define an initial segment \( \sigma' \) of  \( g_{\gamma +1} \) extending \( \sigma \) so that \( T_{\gamma} \) will preserve any potential disagreement so it's observed between the inputs  \( \theta^{\gamma+1}(\sigma'\concat[0]) \) and \( \theta^{\gamma+1}(\sigma'\concat[1])] \) if ever.  For this purpose pick  \( j' > \max(\copylen{\gamma}, \lh{\sigma}) \) so that \( \recfnl{j}{}{} \cequiv \recfnl{j'}{}{} \) and set \( \sigma' = g_{\beta+1}\restr{2j'} \).  Now if \( \recfnl{j'}{\zeron{\gamma}}{\theta^{\gamma+1}(\sigma'\concat[0])} \incompat \recfnl{i}{\zeron{\gamma}}{\theta^{\gamma+1}(\sigma'\concat[1])} \) then for some choice of \( i\in \set{0,1}{} \) the string \(  \theta^{\gamma+1}(\sigma'\concat[i]) \) disagrees with \( X \) so  \( \sigma'\concat[i] \models \psi \) contradicting the assumption that \( \sigma \) super forced \( \lnot \psi \) on \( T_{\gamma+1} \).  Thus by definition \ref{def:disagreement-eager} every extension of \( \theta^{\gamma+1}(\sigma')=\tau \) yields compatible computations  under \( \recfnl{j'}{}{} \).  Thus given \( y \) we may compute \( X(y) \) as the value of the first converging computation \( \recfnl{j}{\zeron{\gamma} \Tplus \hat{\tau} }{y}  \) for some \( \hat{\tau} \in T_{\gamma}\) with \( \hat{\tau} \supseteq \tau \).  Since \( T_\gamma \Tleq \zeron{\gamma} \) this search can be performed computably in \( \zeron{\gamma} \) and as \( g_{\gamma} \supseteq \hat{\tau} \) eventually a converging computation will always be found.

Now suppose \( \beta \) is a limit and again let \( \gamma \kleenel \beta \) and \( e,j \) satisfy \eqref{lem:alpha-subgeneric:eq-counter}.  By the minimality of \( \beta \), if \( X \Tleq \zeron{\kappa} \) with  \( \beta \kleeneg \kappa \kleenegeq \gamma \) we are done so without loss of generality we may assume that \( \gamma=\kleenelim{\beta}{n} \).   To coordinate the behavior of the computation of \( X \) from the various \( g_\kappa \) we show that we can choose a single index for all such computations.

By lemma \ref{lem:tower-homeomorphism} we can uniformly compute \( g_\gamma=\treemap{\kappa}{\gamma}(g_{\kappa}) \) from \( g_{\kappa} \) using \( \zeron{\kappa}  \).  We claim that there is a single index \( j' \) such that \(X= \recfnl{j'}{\zeron{\kappa} \Tplus g_{\kappa} }{} \) whenever \( \beta \kleenegeq \kappa \kleenegeq \gamma \).  The computation coded by \( j' \) can check whether \( \lambda \kleeneleq \kappa \) by inspecting \( \zeron{\kappa} \)  allowing \( \recfnl{j'}{\zeron{\kappa}}{} \) to recover \( \kappa\) at which point it can apply \( \treemap{\kappa}{\gamma}(g_{\kappa}) \).  Note that our index \( j' \) has the property that \(  \recfnl{j'}{\zeron{\kappa+1}}{\sigma} = \recfnl{j'}{\zeron{\kappa}}{\theta^{\kappa+1}(\sigma)} \) provided \( \gamma \kleenel \kappa \kleenel \beta \) and \( \sigma \in T_{\kappa+1} \).  Armed with this index we define \( \phi \) asserting that some such computation for \( X \) disagrees with the computation from \( \zeron{\beta} \).
	
\begin{equation}
		\phi = \exists({\sigma \subseteq g_\beta})\exists({x,s,\kappa})\left(\beta \kleeneg \kappa \kleenegeq \gamma \land \recfnl{e}{\zeron{\beta}}{x}\conv[s] \neq \recfnl{j'}{\zeron{\kappa} \Tplus \treemap{\beta}{\kappa}(\sigma) }{x}\conv[s]\right)
\end{equation}
	
Since \( \phi \) is false let  \( \sigma \subset g_{\beta} \) on \( T_{\beta} \) force \( \lnot \phi \).   Now fix \( \hat{j} > \max(\copylen{\beta}+n, \lh{\sigma}) \) so that \( \recfnl{j}{}{} \cequiv \recfnl{j'}{}{} \cequiv \recfnl{\hat{j}}{}{} \) and let \( m = 2\hat{j}  - \copylen{\beta} \).  Now if \( \kappa =\kleenelim{\beta}{m} \) by \eqref{edef:copylen} \( \copylen{\kappa}=m + \copylen{\beta} = 2\hat{j} \) and by \eqref{edef:copylen-between} \( \copylen{\kappa'} > \copylen{\kappa} \) for \( \kappa \kleenel \kappa' \kleenel \beta \).  Now if \( \sigma' = g_{\beta}\restr{2\hat{j}}\) then we also have  \(\sigma'= g_{\kappa}\restr{2\hat{j}}=g_{\kappa+1}\restr{2\hat{j}} \) and \( \sigma'\concat[i] \in T_{\kappa+1}\restr{\copylen{\kappa+1}}=T_{\beta}\restr{\copylen{\kappa+1}} \).  Turning our attention back to definition \ref{def:disagreement-eager} suppose that  \( \recfnl{j'}{\zeron{\kappa}}{\theta^{\kappa+1}(\sigma'\concat[0])} \) and  \( \recfnl{j'}{\zeron{\kappa}}{\theta^{\kappa+1}(\sigma'\concat[1])} \) are incompatible then we can choose \( \tau=\sigma'\concat[i] \) such that  \(  \recfnl{j'}{\zeron{\kappa} \Tplus  \treemap{\beta}{\kappa}(\tau)} \) is incompatible with \( X \).  This holds as \( \treemap{\beta}{\kappa}(\tau) \) can be factored to \( \theta^{\kappa+1}(\treemap{\beta}{\kappa+1}(\tau))\) and as \( \lh{\tau}\leq\copylen{\kappa+1} \) simplifies to just \(  \theta^{\kappa+1}(\tau) \).  As such a \( \tau \) would force \( \phi \) we conclude that every \( \tau \in T_{\kappa} \) extending \( \theta^{\kappa+1}(\sigma') \) yields compatible computations and we compute \( X \) from \( \zeron{\kappa} \) as we did in the successor stages.  Hence \( X \Tleq \zeron{\kappa} \) contradicting the minimality of \( \beta \). 
\end{proof}

We now show that a slight modification of the construction of \( T_{\beta} \) from \( T_{\beta+1} \) and \( T_{\enumpred{\beta}} \) we performed above lets us build a subgeneric \( \alpha \) root.

\begin{lemma}\label{lem:tree-minus-one-alt1}
	The following conditions may be added to those of lemma \ref{lem:tree-minus-one} so that \( T_{\beta} \) continues to be effectively built from \( T_{\beta+1} \) and \( T_{\enumpred{\beta}} \) while jointly satisfying all conditions.

\begin{arabiclist}\setcounter{enumi}{3}
	\item If \( \gamma=\enumpred{\beta} \) and \( T_\gamma \) is padded or \( \copylen{\beta}=0 \)  then \( T_\beta \) is padded.
	\item \( \theta^{\beta+1} \) is disagreement preserving\label{lem:tree-minus-one:disagreement-preserving}
\end{arabiclist}
\end{lemma}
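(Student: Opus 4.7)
The plan is to augment the stagewise construction of $\theta_{s}$ in lemma \ref{lem:tree-minus-one} with two additional ingredients and then verify that the original conclusions still go through.

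For padding, I enlarge the definition of $T_{\beta}$ by an unconditional rule: whenever a string $\rho$ of even length with $\lh{\rho}\geq\copylen{\beta}$ is placed in $T_{\beta}$, both $\rho\concat[0]$ and $\rho\concat[1]$ are placed in $T_{\beta}$ as well. Since the padded condition is vacuous on odd-length nodes, these extra successors can be left as leaves, i.e., never given any further extensions in $T_{\beta}$, so they do not lie in the range of $\theta^{\beta+1}$ and do not alter $[T_{\beta}]$. Padding on $T_{\beta}\restr{\copylen{\beta}}$ is inherited from $T_{\gamma}$ by hypothesis (vacuous when $\copylen{\beta}=0$), and the enlargement above covers all longer nodes. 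Because this adds only two extra nodes below each range element of the appropriate parity, the uniformity of $T_{\beta}$ as a $\zeron{\beta}$ computable set is preserved.

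For disagreement preservation, I modify the rule for defining $\theta_{s+1}(\sigma\concat[k])$ whenever $\lh{\sigma}=2i\geq\copylen{\beta}$. Rather than merely extending $\theta_{s}(\sigma)$ by a single bit as in the original construction, the construction performs at every stage a $\zeron{\beta}$ computable search for a pair of strings $\tau_{0},\tau_{1}\supseteq\theta_{s}(\sigma)$ for which $\recfnl{i}{\zeron{\beta}}{\tau_{0}}$ and $\recfnl{i}{\zeron{\beta}}{\tau_{1}}$ converge within the current stage to incompatible values; the first time such a pair appears we redefine $\theta_{s+1}(\sigma\concat[k])=\tau_{k}$ and reset $\theta$ on all proper extensions of $\sigma\concat[k]$, exactly as in the original eagerness injury mechanism. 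If the eagerness rule later redefines $\theta(\sigma)$ itself, the disagreement search simply restarts below the new value. All other clauses of the construction are copied verbatim from lemma \ref{lem:tree-minus-one}.

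The main obstacle is verifying that this enlarged construction still stabilizes at every node of $\hat{T}$, and this is a routine extension of the original finite-injury analysis: below any $\sigma$ of length at most $2k$ the eagerness rule acts at most $2^{k}$ times, and below any final value of $\theta(\sigma)$ the disagreement rule fires at most once per child, since converging incompatible computations with oracle $\zeron{\beta}$ never get revoked. Hence $\theta_{s}(\sigma)$ and $\theta_{s}(\sigma\concat[k])$ reach final values and the verifications of parts \ref{lem:tree-minus-one:copy}--\ref{lem:tree-minus-one:eagerly} of lemma \ref{lem:tree-minus-one} transfer verbatim. The new clauses hold by design: either no pair of extensions of $\theta^{\beta+1}(\sigma)$ has incompatible values of $\recfnl{i}{\zeron{\beta}}{\cdot}$, giving the first disjunct of definition \ref{def:disagreement-eager}, or the $\zeron{\beta}$ search eventually locates one and commits $\theta^{\beta+1}(\sigma\concat[0])$ and $\theta^{\beta+1}(\sigma\concat[1])$ to witness it, giving the second. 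Uniformity and totality on partial indexes survive since every new ingredient consults only $\zeron{\beta}$ and the same stagewise approximations to $T_{\beta+1}$ and $T_{\gamma}$ used in the original argument.
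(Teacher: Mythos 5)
Your proposal matches the paper's approach: pad $T_\beta$ by unconditionally adding both successors of every even-length node of length $\geq\copylen{\beta}$ (with short lengths handled by the hypothesis on $T_\gamma$), and obtain disagreement preservation by redirecting $\theta(\sigma\concat[0]),\theta(\sigma\concat[1])$ to incompatible extensions once they are observed, resetting below $\sigma$, and folding this into the finite-injury bookkeeping.

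One small inaccuracy is worth flagging. You assert that the padding successors ``can be left as leaves \ldots\ so they do not lie in the range of $\theta^{\beta+1}$.'' That is not a freedom the construction actually has: since you copy the eagerness clause verbatim, it may later redirect some $\theta(\sigma)$ to a padding node $\rho\concat[k]$ when $\rho\concat[k]\in\REset(\zeron{\beta})[s]{i}\cap T_s$, after which that node acquires extensions and enters the range. Indeed the construction \emph{must} allow this, since if a padding leaf extending $\theta(\sigma)$ met the $i$-th r.e.\ set and the eagerness mechanism ignored it, the eagerness guarantee (``$\theta(\sigma)$ meets $\REset(\zeron{\beta}){i}$ or no $\tau\in T$ with $\tau\supset\theta(\sigma)$ does'') would fail. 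Your intended conclusion --- that $[T_\beta]$ is unchanged --- is nevertheless correct, because $[T_\beta]=[\theta^{\beta+1}(T_{\beta+1})]$ is governed by part \ref{lem:tree-minus-one:map} of lemma \ref{lem:tree-minus-one} regardless of which dead ends $T_\beta$ contains; the justification via ``leaves outside the range'' is simply the wrong one. This does not damage the rest of the argument, which is otherwise sound and aligned with the paper's.
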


\begin{proof}
	To ensure that  part \ref{lem:tree-minus-one:disagreement-preserving} holds whenever \( \lh{\sigma}=2i \geq \copylen{\beta} \), \( \tau_0= \theta_s^{\beta+1}(\sigma\concat[0]) \) and \( \tau_1= \theta_s^{\beta+1}(\sigma\concat[1]) \) are both defined and \( \recfnl[s]{i}{\tau_0}{} \compat \recfnl[s]{i}{\tau_1}{} \), but there are \( \tau'_0, \tau'_1  \supset \theta_s^{\beta+1}(\sigma) \) with \( \recfnl[s]{i}{\tau'_0}{} \incompat \recfnl[s]{i}{\tau'_1}{} \) then set \( \theta_{s+1}^{\beta+1}(\sigma\concat[0])=\tau'_0 \) and \( \theta_{s+1}^{\beta+1}(\sigma\concat[1])=\tau'_1 \) and unset \( \theta_{s+1}^{\beta+1}(\tau) \) for every \( \tau \supsetneq \sigma \).
	To ensure that \( T_\beta \) is padded we simply place \( \sigma\concat[0] \) and \(  \sigma\concat[1] \) into \( T_\beta \) whenever \( \sigma \in T_\beta \) and for some \( l \) \( \lh{\sigma}= 2l \geq \copylen{\beta} \).  If \( \copylen{\beta}=0 \) and this suffices.  Otherwise \( T_\beta \) is padded as \( T_\gamma\restr{\copylen{\beta}}=T_{\beta}\restr{\copylen{\beta}} \) and \( T_\gamma \) is padded.
\end{proof}

Note that given an initial tree \( T \) we can easily perform effective modifications to ensure it is padded so substituting lemma \ref{lem:tree-minus-one-alt1} into the construction given by lemma \ref{lem:downward-tower-exists} yields  a disagreement preserving downwardly generic tower of length \( \alpha \).  Thus given a \( \zeron{\alpha} \) index for \( g_{\alpha} \) viewed as a tree \( T_{\alpha} \) with \( [T_{\alpha}]=\set{g_\alpha}{} \)  we can compute the index for a computable tree \( T_0 \) with a unique  path \( g \) which by lemma \ref{lem:subgeneric-root}  is \( \alpha \) subgeneric.  While this easily gives a (uniformly witnessed) sequence of \( \pizn{1} \) singletons \( g_i \) of \( \alpha \) subgenerics this is not quite sufficient to prove corollary \ref{cor:subgeneric} as we must still ensure that part \ref{cor:subgeneric:alpha-independence} of definition \ref{cor:subgeneric} holds.  To do this we observe

\begin{lemma}\label{lem:uniform-alpha-independence}
There is a uniform sequence \( T^{i}_{\alpha} \Tleq \zeron{\alpha} \) each having a unique path \( \hat{g_{i}} \) such that
	\[
	\hat{g_k} \nTleq \left(\TPlus_{i \neq k} \hat{g_i} \Tjoin \zeron{\alpha} \right)
	\]
\end{lemma}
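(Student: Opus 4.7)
The plan is a two-step construction. First I would produce a $\zeron{\alpha+1}$-computable sequence $(f_i)$ satisfying the independence requirements by a standard Kleene--Post finite-extension argument, then use a modulus-of-convergence trick to uniformly upgrade each $f_i$ to a $\pizn{1}$ singleton relative to $\zeron{\alpha}$, namely the unique path $\hat{g_i}$ of a tree $T^i_\alpha\Tleq\zeron{\alpha}$, arranged so that $\hat{g_i}\Tequiv f_i\Tplus\zeron{\alpha}$.

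For the first step, list the independence requirements
\[
R_{k,e}\colon\recfnl{e}{(\TPlus_{j\neq k}f_j)\Tplus\zeron{\alpha}}{}\neq f_k.
\]
At stage $s=\pair{k,e}$ I would maintain finite initial segments $\sigma^j_s\subset f_j$ and use $\zeron{\alpha+1}$ to decide the $\sigmazn[\zeron{\alpha}]{1}$ question whether there exist $\tau^j\supseteq\sigma^j_s$ (for $j\neq k$) and $x\geq\lh{\sigma^k_s}$ with $\recfnl{e}{(\TPlus_{j\neq k}\tau^j)\Tplus\zeron{\alpha}}{x}\conv=y$. If yes, set $\sigma^j_{s+1}=\tau^j$ and extend $\sigma^k_s$ so its $x$-th bit is $1-y$; if no, simply extend everything by zeros, in which case $R_{k,e}$ is automatically met because $\recfnl{e}{(\TPlus_{j\neq k}f_j)\Tplus\zeron{\alpha}}{x}$ must diverge for every $x\geq\lh{\sigma^k_s}$ by use-monotonicity.

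For the second step, since $f_i\Tleq\zeron{\alpha+1}$ uniformly in $i$, each $f_i$ admits a uniformly $\zeron{\alpha}$-computable stagewise approximation $\seq{f_{i,s}}{s}$. I would define $\hat{g_i}$ by interleaving the bits $f_i(n)$ with a unary encoding of the true modulus $m_i(n)$, namely the least $s$ for which $f_{i,t}(n)$ is constant on $t\geq s$, and let $T^i_\alpha$ consist of every finite string whose output/modulus components pass the bounded, $\zeron{\alpha}$-decidable check that no change in the approximation is observed between the declared modulus at each coordinate and the string length under examination. In the limit the unique infinite path is precisely the interleaving of $f_i$ with its true modulus, while any finite string declaring a too-small modulus is pruned once the approximation is later seen to shift, so $T^i_\alpha$ is $\zeron{\alpha}$-computable uniformly in $i$ and has unique path $\hat{g_i}$.

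The main obstacle is the two-layer complexity bookkeeping: the Kleene--Post step naturally lives in $\zeron{\alpha+1}$, so to obtain $\zeron{\alpha}$-decidable trees we must absorb the extra jump via the modulus encoding. Once this is in place, uniformity in $i$ is transparent since every operation is parameterised by $i$ through a fixed $\zeron{\alpha}$-computable procedure, and the desired independence transfers because $\hat{g_i}\Tequiv f_i\Tplus\zeron{\alpha}$ uniformly: any reduction $\hat{g_k}\Tleq\TPlus_{i\neq k}\hat{g_i}\Tplus\zeron{\alpha}$ would induce $f_k\Tleq\TPlus_{i\neq k}f_i\Tplus\zeron{\alpha}$, contradicting the first step.
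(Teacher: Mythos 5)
Your two-step plan has a genuine gap at the point where you claim the independence transfers, specifically in the asserted equivalence \( \hat{g_i} \Tequiv f_i \Tplus \zeron{\alpha} \).  The direction \( \hat{g_i} \Tgeq f_i \Tjoin \zeron{\alpha} \) is fine, but the reverse direction \( \hat{g_i} \Tleq f_i \Tjoin \zeron{\alpha} \) requires \( m_i \Tleq f_i \Tjoin \zeron{\alpha} \), and this is not justified by anything in your construction.  The modulus of a \( \zeron{\alpha} \)-computable approximation to a \( \Delta^0_1(\zeron{\alpha+1}) \) object records when the approximation settles, which is \( \Pi^0_1(\zeron{\alpha}\Tplus f_i) \) information that \( f_i \Tjoin \zeron{\alpha} \) has no way to bound.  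Indeed, for the natural limit-lemma approximation the settling time tracks the settling time of the underlying approximation to \( \zeron{\alpha+1} \), so generically \( m_i \Tjoin \zeron{\alpha} \Tgeq \zeron{\alpha+1} \).  In that event the construction does not merely lose the equivalence; it actively fails, since then \( \TPlus_{i\neq k}\hat{g_i} \Tjoin \zeron{\alpha} \Tgeq \zeron{\alpha+1} \Tgeq \hat{g_k} \), which is precisely what the lemma is supposed to rule out.  You have no requirement in either step of your construction controlling the growth or information content of the \( m_i \), so the modulus encoding leaks more than a jump of extra information into the singleton.

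The paper sidesteps this completely by never working at the level of \( \zeron{\alpha+1} \): it runs a finite-\emph{injury} (not finite-extension) argument computably in \( \zeron{\alpha} \), building the approximations \( \hat{g}_{k,s} \) and the trees \( T^{k}_\alpha \) simultaneously, with the requirement \( \hat{g_k} \neq \recfnl{e}{\TPlus_{i\neq k}\hat{g_i}\Tjoin\zeron{\alpha}}{} \) met directly by diagonalization against the limit objects themselves, and each new value of \( \hat{g}_{k,s+1}(x) \) chosen fresh and large so the tree's complement enumeration can keep pace.  Because the construction lives at \( \zeron{\alpha} \) there is no extra jump to absorb and no modulus to smuggle information.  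If you want to keep a two-step architecture you would have to arrange the first step so that each \( f_i \) is, say, \( \zeron{\alpha} \)-left-c.e.\ (monotone approximation) so that the modulus really is uniformly \( f_i \Tjoin \zeron{\alpha} \)-computable, but a straightforward Kleene--Post extension argument does not deliver that, and at that point you have effectively been pushed into doing the injury argument in \( \zeron{\alpha} \) anyway.
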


\begin{proof}
	Our construction builds \( \hat{g_k}  \) as the limit of \( \hat{g_{k,s}}  \) via a finite injury argument.  The requirements \( \mathcal{R}_{k,i} \) demand that \(\hat{g_k} \neq \recfnl{i}{\TPlus_{i \neq k} \hat{g_i} \Tjoin \zeron{\alpha}}{} \) and are met by changing the value of \( g_{k,s+1}(x) \) to disagree with the computation in question whenever such a change is not restrained by a higher priority requirement and restraining any changes in the use of this computation or of \( g_{k}(x) \).  Every time \( g_{k,s+1}(x) \) is set to a new value it is picked large enough not yet to have been enumerated into the compliment of \( T^{i}_{\alpha}\).
\end{proof}

This now suffices to complete the proof of corollary \ref{cor:subgeneric}.  Using the uniformity of the trees \( T^{i}_{\alpha} \) from lemma
\ref{lem:uniform-alpha-independence} and the uniformity of the construction in lemma \ref{lem:downward-tower-exists} (using the modified lemma \ref{lem:tree-minus-one-alt1}) we get a uniform sequence of computable trees \( T^{i}_0 \) each containing a single \( \alpha \) subgeneric path \( g_i \).  To see that part \ref{cor:subgeneric:alpha-independence} of definition \ref{cor:subgeneric} holds observe that the uniformity of the trees \( T^{i}_{\alpha} \) and the uniform definition of the maps \( \treemap{i,\alpha+1}{0} \) guarantees the equivalence \( g_i \Tjoin \zeron{\alpha} \Tequiv \hat{g_i} \Tjoin \zeron{\alpha} \) holds uniformly.  Also by the uniformity of lemma \ref{lem:alpha-subgeneric-part-1} \( g_i \Tjoin \zeron{\alpha} \Tequiv \jumpn{g_i}{\alpha} \) holds uniformly so if part \ref{cor:subgeneric:alpha-independence} failed we would have the contradiction
\[
\TPlus_{i \neq k} \hat{g_i} \Tjoin \zeron{\alpha} \Tgeq \TPlus_{i \neq k} \jumpn{g_i}{\alpha} \Tgeq g_k \Tjoin \zeron{\alpha} \Tgeq \hat{g_k}
\]

It is worth remarking that the result claimed by Harrington in \cite{mclaughlins-conjecture} isn't actually lemma \ref{cor:subgeneric} but the substantially stronger version below.    

\begin{corollary}[Harrington \cite{mclaughlins-conjecture}]\label{cor:subgeneric-star}
	For each \( \alpha < \wck \) there is a sequence \( \seq{g_n \in \baire}{n \in \omega} \) so that for all \( n \in \omega \)
	\begin{enumerate}
		\item \( \displaystyle \Tdeg{g_n}  \) is \( \alpha \) subgeneric.
		\item \(\displaystyle  g_n \nTleq \jumpn{\left(\TPlus_{i \neq n} g_i\right)}{\alpha}  \)\label{cor:subgeneric-star:alpha-independence}.
		\item \( g_n \) is a the unique solution of a \( \pizn{1} \) formula the index for which is given uniformly in \( n \).
	\end{enumerate}
\end{corollary}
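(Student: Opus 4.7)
The plan is to strengthen the proof of corollary \ref{cor:subgeneric} by arranging that, for each \( k \), the projection \( \TPlus_{i \neq k} g_i \) itself satisfies the analog of part \ref{def:subgeneric:join-jump} of the \( \alpha \)-subgeneric definition at level \( \alpha \), that is \( \jumpn{\left(\TPlus_{i \neq k} g_i\right)}{\alpha} \Tequiv \left(\TPlus_{i \neq k} g_i\right) \Tjoin \zeron{\alpha} \). Granted this, the remainder of the argument closes exactly as in corollary \ref{cor:subgeneric}: a hypothetical \( g_k \Tleq \jumpn{\left(\TPlus_{i \neq k} g_i\right)}{\alpha} \) reduces to \( g_k \Tleq \left(\TPlus_{i \neq k} g_i\right) \Tjoin \zeron{\alpha} \), and the uniform equivalence \( g_i \Tjoin \zeron{\alpha} \Tequiv \hat{g_i} \Tjoin \zeron{\alpha} \) turns this into \( \hat{g_k} \Tleq \left(\TPlus_{i \neq k} \hat{g_i}\right) \Tjoin \zeron{\alpha} \), contradicting lemma \ref{lem:uniform-alpha-independence}.

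To arrange the joint relation, rather than applying the \( \alpha \)-subgeneric-reduct construction separately to each \( T^{i}_{\alpha} \), I would apply it once to a joint tree \( T^{*}_{\alpha} \Tleq \zeron{\alpha} \) whose unique path is \( \TPlus_{i} \hat{g_i} \). By lemma \ref{lem:downward-tower-exists} (using lemma \ref{lem:tree-minus-one-alt1}) this produces a disagreement preserving downwardly generic tower \( \seq{T^{*}_{\beta}}{\beta \kleeneleq \alpha} \) whose unique root \( G = \TPlus_{i} g_i \) is \( \alpha \)-subgeneric by lemma \ref{lem:subgeneric-root}. Take \( g_n \) to be the \( n \)-th column of \( G \); this produces \( \pizn{1} \) singletons uniformly in \( n \) via the natural projection trees on \( T^{*}_{0} \), and each \( g_n \) individually inherits part \ref{def:subgeneric:meet-zeron-beta} of the \( \alpha \)-subgeneric definition from \( G \) since \( \jumpn{g_n}{\beta} \Tleq \jumpn{G}{\beta} \).

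What remains is the crux: showing that each individual \( g_n \) and each cofinite projection \( \TPlus_{i \neq k} g_i \) inherits part \ref{def:subgeneric:join-jump} of the subgeneric definition from \( G \). This does not follow mechanically from \( G \)'s subgenericity because computing the \( \beta \)-jump of a projection from only that projection together with \( \zeron{\beta} \) requires the relevant forcing witnesses to live in the projection rather than the full joint. The proposed remedy is to project the entire tower columnwise and verify that the projected sequence is itself a downwardly generic tower of length \( \alpha \) witnessing that the projection is an \( \alpha \)-root, so that lemma \ref{lem:alpha-subgeneric-part-1} supplies part \ref{def:subgeneric:join-jump}. The main obstacle here is that lemma \ref{lem:tree-minus-one} defines \( \theta^{\beta+1}(\sigma\concat[k]) = \theta^{\beta+1}(\sigma)\concat[\pair{k}{s}] \) with \( s \) a global stage number, so the column projection of \( \theta^{\beta+1}(\sigma) \) can depend on bits of \( \sigma \) in other columns and the induced map on column trees need not be well-defined. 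The real work is therefore to redesign lemma \ref{lem:tree-minus-one} so the monotonic maps and eager genericity requirements respect column structure — for example by coding the column index into the stage marker and spreading genericity requirements across columns — while remaining compatible with the effective transfinite recursion of lemma \ref{lem:downward-tower-exists} and the disagreement preserving tweaks of lemma \ref{lem:tree-minus-one-alt1}.
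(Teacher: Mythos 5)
Your overall reduction is sound and matches the paper's intent: once every cofinite join \(\TPlus_{i\neq k} g_i\) satisfies \(\jumpn{\left(\TPlus_{i\neq k} g_i\right)}{\alpha} \Tequiv \left(\TPlus_{i\neq k} g_i\right) \Tjoin \zeron{\alpha}\), the argument from corollary \ref{cor:subgeneric} closes the proof. You also correctly identify that running lemma \ref{lem:downward-tower-exists} once on a joint tree with unique path \(\TPlus_i \hat{g_i}\) is the right starting point, and you correctly diagnose the obstacle: the maps \(\theta^{\beta+1}\) produced by lemma \ref{lem:tree-minus-one} do not factor through the column structure, so the induced column-to-column maps need not be well-defined.

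Where the proposal goes wrong is in the proposed remedy. You aim to redesign lemma \ref{lem:tree-minus-one} so the maps and genericity requirements ``respect column structure'' — that is, to obtain bona fide column-to-column maps \(\theta^{i,\beta+1}\map{T^i_{\beta+1}}{T^i_\beta}\), recognize each column (and each cofinite projection) as a downwardly generic tower, and then invoke lemma \ref{lem:alpha-subgeneric-part-1}. The paper argues this direction is not available: if the maps factored columnwise, then to meet a mutual genericity requirement you would have to prune the joint tree above some \(\bm{\sigma}\) through a product node \(\bm{\tau}\), and that pruning commits column \(j\) to \(\setcol{\bm{\tau}}{j}\) for \emph{every} joint node sharing column-\(j\) component with \(\bm{\sigma}\), including joint nodes incompatible with \(\bm{\sigma}\). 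Indeed the paper explicitly states the construction ``doesn't yield a \(\zeron{\beta+1}\) computable homeomorphism from \([T^n_{\beta+1}]\) to \([T^n_\beta]\) (since some nodes would need to be mapped to many potential values).'' The actual trick is the opposite of what you propose: keep each \(\theta^{i,\beta+1}\) as a map from the \emph{joint} tree \(\bm{T_{\beta+1}}\) into the column tree \(T^i_\beta\), and add the incompatibility requirement that whenever \(\bm{\sigma}\incompat\bm{\sigma'}\) we have \(\theta^{i,\beta+1}(\bm{\sigma})\incompat\theta^{i,\beta+1}(\bm{\sigma'})\) for every \(i\). Thus each column string carries the full information of the joint path, pruning for mutual genericity above one joint node cannot collide with commitments above an incompatible one, and one obtains a \(\zeron{\beta+1}\)-computable homeomorphism \([\bm{T_\beta}]\to[\bm{T_{\beta+1}}]\) only at the joint level — which, together with the encoding of the joint into each column, is what allows part \ref{def:subgeneric:join-jump} to be recovered for the cofinite projections. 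So the gap in your proposal is a missing idea (make columns encode the joint, rather than make the maps factor), and the direction you suggest instead is one the paper argues cannot succeed.
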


Corollary \ref{cor:subgeneric-star} replaces \hyperref[cor:subgeneric:alpha-independence]{claim \ref*{cor:subgeneric:alpha-independence} of corollary \ref*{cor:subgeneric}} which required that no \( g_n \) could be computable in the join of the \( \alpha \) jumps of the remaining \( g_i \) with the substantially stronger requirement that \( g_n \) not be computable in the \( \alpha \) jump of the join of the remaining \( g_i \).  Corollary \ref{cor:subgeneric-star} is true but we have only been able to prove the result by making some substantial modifications to the underlying framework which we sketch below.  

To establish \hyperref[cor:subgeneric-star:alpha-independence]{claim \ref*{cor:subgeneric-star:alpha-independence} of corollary \ref*{cor:subgeneric-star}} we introduce a notion of mutual genericity for a sequence of the reals \( g^{i}\) on the sequence of trees \( T^i \) where \( g^{i} \) is a path through \( T_i \).  In particular we define \( \bm{g} = \TPlus g^{i} \) to be the function where \( \bm{g}(\pair{i}{x})=g^{i}(x) \) and write \( \setcol{\bm{g}}{n}(x)\)  for  \( \bm{g}(\pair{n}{x}) \).  We further define \( \bm{T}=\TPlus T^{i} \) to be the tree consisting of those nodes \( \bm{\sigma} \) with \( \setcol{\bm{\sigma}}{n} \in T^n \) for every \( n \) and say that the sequence of singletons \( g^i \) is mutually \( \alpha \) generic on \( T^i \) if \( \bm{g} \) is \( \alpha \) generic on \( \bm{T} \).  We prove \ref{cor:subgeneric-star} by simultaneously building \( \omega \) many disagreement preserving downwardly generic towers consisting of the trees \( T^{i}_\beta \) for \( \beta \kleeneleq \alpha \) where \( g^{i}_\beta \) is the unique path through \( g^{i}_\beta \) and the trees \( T^{i}_\beta \) satisfy the obvious generalization of being eagerly generic to the notion of mutually eagerly generic.  We stipulate our coding function has the property that \( \pair{n}{x+1}  \) is always greater than \( \pair{n}{x} \) so if \( \bm{\sigma} \in \bm{T} \) we may assume that \( \bm{\sigma}= \TPlus \sigma^{i} \) with each \( \sigma^{i} \in \wstrs \) and all but finitely many of them equal to the empty string

Generalizing our previous construction we now build \( T^{i}_\beta \) as the image of \( \theta^{i, \beta+1} \Tleq \zeron{\beta+1} \) mapping \( \bm{T_{\beta+1}} \) to \( T^{i}_\beta \).  Naively one might imagine that we could straightforwardly carry out the same forcing construction we used previously but now applied to \( \bm{T_\beta} \) as in the standard (not localized to a tree) product forcing construction.  However, since we wish to maintain \( \bm{T_\beta}=\TPlus T_{\beta}^{i} \) so as to still produce a sequence \( g^{i} \) of \( \alpha \)-subgeneric roots such a simple argument won't suffice.  In particular by demanding that every path through \( \bm{T_\beta} \) extending \( \bm{\sigma} \) also pass through \( \bm{\tau} \supseteq \bm{\sigma} \) we would impose pruning on the factors \( T^{i}_\beta \) which would in turn force a pruning of \( \bm{T_\beta} \) above other nodes \( \bm{\sigma'} \) even when \( \bm{\sigma'}  \) is incompatible with \( \bm{\sigma} \) because we could still have \( \setcol{\bm{\sigma} }{i}=\setcol{\bm{\sigma'} }{i} \).  To avoid this difficulty we ensure that the paths in \( T^{i}_\beta \) carry with them the information about the paths in \( T^{j}_\beta, j \neq i \).  In particular we will ensure that if \( \bm{\sigma} \incompat \bm{\sigma'} \) then \( \theta^{i, \beta+1}(\bm{\sigma}) \incompat \theta^{i, \beta+1}(\bm{\sigma'})  \) for every \( i \).

As before we define \( \theta^{i, \beta+1} \) as the limit of a stagewise construction so we suppose that \( \theta^{i, \beta+1}_s(\bm{\sigma}^{-}) \) is defined and \( \lh{\bm{\sigma}} \leq s \) and define \( \theta^{i, \beta+1}_{s+1}(\bm{\sigma}) \) for each \( i \) exactly as we did in lemma \ref{lem:tree-minus-one}.  Note that if \( \pair{n}{x}=\lh{\bm{\sigma}} \) this has the effect of duplicating the usual action of \( \theta^{\beta+1}_{s+1} \) as it would apply to \( \setcol{\bm{\sigma}}{n} \) in the single tree \( T^{n}_{\beta+1} \) at \( \theta^{i, \beta+1}_{s+1}(\bm{\sigma}^{-}) \) on \( T^{i} \) for all \( i \in \omega \).  It is readily apparent that this definition ensures the required incompatibility property mentioned above.  We may now safely prune branches on \( \bm{T_\beta} \) without fear of inadvertent interference.

We now simply demand that if at some stage \( s > 2i \) we discover some \( \bm{\sigma'} \supseteq \bm{\sigma}  \), \( \lh{\bm{\sigma}}=2i \) with some finite (contiguous) initial segment of \( \TPlus \theta^{i, \beta+1}_s(\bm{\sigma'}) \) meeting \( \REset(\zeron{\beta}){i} \) then we redefine \( \theta^{i, \beta+1}_{s+1}(\bm{\sigma}) \) to be equal to \( \theta^{i, \beta+1}_s(\bm{\sigma'}) \) and unset \( \theta^{i, \beta+1}_s(\bm{\sigma'}) \) for every \( \bm{\sigma'} \supsetneq \bm{\sigma} \).  By our incompatibility property this can't have any effect on any node in \( \bm{T_\beta} \) incompatible with \( \TPlus \theta^{i, \beta+1}_s(\bm{\sigma'}) \).  At this point we may now appeal to the fact that \( g^{i}_{\beta+1} \) is the unique path through \( \bm{T_{\beta+1}} \) and argue that \( \theta^{i, \beta+1}(\bm{\sigma}) \)  extends to an infinite path on \( T^{i}_\beta \) if and only if \( \setcol{\bm{\sigma}}{n} \) extends to an infinite path on \( T^{n}_{\beta+1} \) for all \( n \).  A similar approach can be applied to demand a slightly modified version of disagreement preservation. While this construction doesn't yield a \( \zeron{\beta+1} \) computable homeomorphism from \( [T^{n}_{\beta+1}] \) to \( [T^{n}_{\beta}] \) (since some nodes would need to be mapped to many potential values) it does yield a \( \zeron{\beta+1} \) homeomorphism from \( [\bm{T_{\beta}}] \)  to \( [\bm{T_{\beta+1}}]  \) and this is sufficient to give both the desired mutual genericity as well as preserve the desired properties of the non-mutual construction.  This completes our sketch of corollary \ref{cor:subgeneric-star}.

Before we finish our discussion of Harrington's work in \cite{mclaughlins-conjecture} one final corollary is worth mentioning.

\begin{corollary}[Harrington \cite{mclaughlins-conjecture}]\label{cor:totally-subgeneric}
There is a non-empty \( \pizn{1} \) class whose members are \( \alpha \) subgeneric for every \( \alpha < \wck \).
\end{corollary}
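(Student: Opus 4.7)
Plan: the goal is to produce a single family $\{T_\beta\}_{\beta \in \kleeneO}$ of trees such that the restriction $\seq{T_\beta}{\beta \kleeneleq \alpha}$ is a disagreement-preserving downwardly generic tower of length $\alpha$ for every $\alpha \in \kleeneO$. If this succeeds then $T_0$ is a single computable tree independent of $\alpha$, and applying lemma \ref{lem:subgeneric-root} at each $\alpha < \wck$ yields that every $g \in [T_0]$ is $\alpha$ subgeneric for every $\alpha < \wck$. Seeding the construction with a non-trivial input such as $\hat{T} = \wstrs$ (or any fixed computable tree with many paths, accessible from every $\zeron{\beta}$) keeps $[T_0]$ a non-empty $\pizn{1}$ class, which is the class demanded by the corollary.

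The construction promotes lemma \ref{lem:downward-tower-exists} from a fixed top $\alpha$ to an open-ended recursion along all of $\kleeneO$. Using the recursion theorem we produce a single computable index $e$ satisfying
\[
	\recfnl{e}{}{\beta} = J(\beta,\recfnl{e}{}{\beta+1},\recfnl{e}{}{\enumpred{\beta}})
\]
for successor-type $\beta$, with $J$ the operator supplied by lemma \ref{lem:tree-minus-one-alt1} that already builds in eager genericity, padding, and disagreement preservation; the analogous limit clause handles limit notations. Because $J$ is total and robust under partial inputs (as explicitly noted at the end of the proof of lemma \ref{lem:tree-minus-one}), $\recfnl{e}{}{\beta}$ really is a valid $\zeron{\beta}$-index for an eagerly generic padded tree $T_\beta$ at every $\beta \in \kleeneO$, and the embeddings $\theta^{\beta+1}$ may be extracted from the same recursion uniformly in $\beta$.

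Verification that this family realises definition \ref{def:subgeneric-root} at each $\alpha \in \kleeneO$ is transfinite induction along $\kleeneO$: for fixed $\alpha$ one runs the same downward induction over $\beta \kleeneleq \alpha$ as in lemma \ref{lem:downward-tower-exists}, replacing $T_{\enumpred{\beta}}$ by a canonical copy of $T_\alpha$ whenever $\enumpred{\beta} \kleeneg \alpha$ exactly as in the original argument. The main obstacle is the absence of a distinguished top: without an anchoring index for some $T_\alpha$ the fixed point need not a priori satisfy the base case of the downward induction. The fix is to insist that $J$ always outputs an eagerly generic padded tree regardless of what inputs are supplied, so that the "top" $T_\alpha$ automatically meets the base case for every $\alpha$ simultaneously. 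With this in place the arguments of lemma \ref{lem:subgeneric-root} and the disagreement-preservation argument following lemma \ref{lem:tree-minus-one-alt1} apply simultaneously at every $\alpha < \wck$, establishing the corollary.
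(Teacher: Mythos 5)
Your approach has a genuine gap that Kreisel's compactness theorem is introduced precisely to circumvent. The recursion
\[
\recfnl{e}{}{\beta} = J\bigl(\beta,\recfnl{e}{}{\beta+1},\recfnl{e}{}{\enumpred{\beta}}\bigr)
\]
is an \emph{upward} recursion: the value at \( \beta \) depends on values at \( \beta+1 \) and \( \enumpred{\beta} \), both strictly larger. In lemma \ref{lem:downward-tower-exists} the fixed point produced by the recursion theorem is verified to be coherent (i.e., each \( T_\beta \) actually stands in the required homeomorphism/copying relation to \( T_{\beta+1} \) and \( T_{\enumpred{\beta}} \)) by an induction whose base case is the explicitly anchored \( T_\alpha \). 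Every chain of dependencies \( \beta, \beta+1, \beta+2, \dots \) and \( \beta, \enumpred{\beta}, \enumpred{\enumpred{\beta}}, \dots \) terminates at \( \alpha \) in finitely many steps, so the induction grounds out. Drop the top and the dependency chain through successors never terminates; there is no base case against which to verify that the fixed point the recursion theorem hands you is anything but garbage. Your proposed fix — insisting that \( J \) always output \emph{some} eagerly generic padded tree regardless of input — ensures \( \recfnl{e}{}{\beta} \) names a tree, but it does nothing to ensure that \( T_\beta \) is the \( \theta^{\beta+1} \)-image of \( T_{\beta+1} \) or that \( T_\beta\restr{\copylen{\beta}} = T_{\beta+1}\restr{\copylen{\beta}} \). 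Those coherence conditions genuinely couple adjacent levels, and nothing in the fixed-point argument forces them to hold absent a grounding level. Without coherence, lemma \ref{lem:tower-homeomorphism} and hence lemma \ref{lem:subgeneric-root} do not apply.

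The paper's actual argument is non-constructive in a way your proposal is not. It works with the class \( \Lambda_\alpha \) of codes for computable trees that appear as \( T_0 \) in \emph{some} disagreement-preserving downwardly generic tower of length \( \alpha \) (and lack short well-founded height). Each \( \Lambda_\alpha \) is shown to be \( \deltaOneOne \) uniformly in \( \alpha \) along a \( \Pi^1_1 \) nice path through \( \kleeneO \), each is non-empty by lemma \ref{lem:downward-tower-exists}, and they are nested. By \( \sigmaOneOne \)-bounding, any \( \deltaOneOne \) subfamily of these classes has non-empty intersection, so Kreisel's compactness theorem gives a single \( T \in \Isect_{\alpha \in \kleeneO} \Lambda_\alpha \); non-emptiness of \( [T] \) follows because \( T \) is computable yet has no hyperarithmetic bound on its height. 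You should read the paper's proof for the precise statement of \( \Lambda_\alpha \) (notably the ill-foundedness clause, which is what secures both \( \deltaOneOne \)-ness and non-emptiness of \( [T] \)); these details are where your ``seed with \( \wstrs \)'' shortcut also fails, since the intersection argument selects a tree that need not arise from any particular seed.
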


\begin{proof}
	In the appendix we establish the existence of a \( \Pi^{1}_1 \) linearly ordered set of notations  \( I \) cofinal in a path through \( \kleeneO \) such that the functions \( \enumpred{\beta}, \copylen{\beta} \) are uniformly computable in \( \alpha \) on the set \( \set{\beta }{\beta \kleeneleq \alpha} \) for \( \alpha \in I \).  Now consider the predicate \( \Lambda_{\alpha} \) consisting of those sets  \( T \) coding computable trees that with \( T=T_0 \) in some disagreement preserving downwardly generic tower of length \( \alpha \) with \( T  \) not having well-founded height less than the notation denoted by \( \alpha \).
	It is easily checked that \( \Lambda_{\alpha} \) is \( \deltaOneOne \) uniformly  in \( \alpha \) as it is easily defined via number quantification over \( \zeron{\alpha+1} \) since the set of notations whose height is less than that of \( \alpha \) is uniformly computable from \( \zeron{\alpha+1} \).  Thus we may safely identify \( \alpha \in I \) with the \( \deltaOneOne \) index for \( \Lambda_\alpha \).  Moreover \( \hat{\alpha} \kleeneg \alpha \) entails \( \Lambda_{\hat{\alpha}} \subseteq \Lambda_{\alpha} \) and clearly \( \Lambda_{\alpha} \neq \eset \).  
	Now fix some \( \deltaOneOne \) subset \( H \) of \( I \) and consider \( \Isect_{\alpha \in H} \Lambda_\alpha \).  By \( \sigmaOneOne \) bounding there is some \( \alpha \in I \) bounding \( H \) so this intersection contains the non-empty collection \( \Lambda_\alpha \).   Thus by Kreisel's compactness theorem \cite{higher-recursion-theory} there is some \( T \) in \( \Isect_{\alpha \in \kleeneO} \Lambda_\alpha \) and by lemma \ref{lem:subgeneric-root} every path through \( T \) is \( \alpha \) subgeneric for every \( \alpha < \wck \).  Moreover, \( T \) doesn't have well-founded height below \( \wck \) so as \( T \) is computable there must be some infinite path through \( T \).  Indeed, since no path through \( T \) is hyperarithmetic \( [T] \) must be a perfect set.
\end{proof}

\section{Moduli of Computation}
\subsection{Background}

Interest in the computational properties of fast growing functions goes back to Post's program and the realization that how fast the enumeration of  \( \setcmp{M} \) grows is a measure of the thinness of \( \setcmp{M} \), and as Rice first showed \cite{Rice:rec-and-re-orders}  when he characterized the hyperimmune sets this way, it's often an easier concept to work with.   With seeming ingratitude this approach soon turned on Post giving Yates  \cite{Yates:ThreeREtheorems} the tools he needed to put the nails in the coffin of Post's Program by building a complete maximal set.  Later Martin improved this analysis to fully characterize the degrees of maximal sets \cite{Martin:FuncDom} and even today studying the relation between rate of growth and computational power continues to pay off \cite{almost-everywhere-domination,on-a-conjecture-of-dobrinen-and-simpson-concerning,almost-everywhere-domination-and-superhighness,low-for-random-reals-and-positive-measure-domination,mass-problems-and-almost-everywhere-domination,Uniform_A.E._Domination}.  Strangely, however, while many different notions of `fast growing' have been proposed and the degrees of such functions (partially) characterized little work has been done in the other direction.  That is given a degree how fast much a function grow to compute that degree?  To this end we follow Slaman and Groszek in introducing the following definitions \cite{SlamanModulus}.

\begin{definition}\label{def:moduli-of-computation}
	The function \( h \in \baire \) is a moduli (of computation) for a degree \( \Tdeg{d} \) if every \( f \fung h \) computes \( \Tdeg{d} \).  \( h \) is a uniform moduli of computation if there is some fixed computable functional \( \recfnl{}{}{} \) and \( D \in \Tdeg{d} \) such that \( f \fung h \implies \recfnl{}{f}{}=D \).  If furthermore \( h \) is a (uniform) moduli of computation for \( \Tdeg{h} \) we say that \( h \) is a (uniform) self-moduli.
\end{definition}

It's natural to respond to this definition by first asking when can a degree \( \Tdeg{d} \) even have a moduli of computation?  What about a uniform moduli of computation?  Can any degree be computed (uniformly?) by sufficiently fast growing functions?  Though this side of the relationship between rates of growth and computational power hasn't received as much attention as it's opposite these questions are natural enough they have multiple published solutions that are disguised by terminological differences.  We first look to the uniform case where one can show the degrees with a uniform self-moduli are just the \( \pizn{1} \) singletons (in \( \baire \) ) we direct the reader to Jockusch and McLaughlin \cite{countable-retracing-functions-and-pi02-predicates} for the earliest easily straightforward English language proof but follow them in crediting Kuznecov and Trahtenbrot \cite{investigation-of-partially-recursive-operators-by-means-of-the-theory} and latter Myhill \cite{finitely-representable-functions}.  We generalize this result to those functions \( h \) with a uniform moduli in some computable ordinal number of jumps.  Informally the relationship is simply that \( h \) is a \( \pizn{\beta+1} \) singleton if and only if the natural fast growing function computable in \( \jumpn{h}{\beta} \) is a uniform modulus for \( h \).  To state the theorem formally we need to replace ``natural fast growing function'' with an explicit function.

\begin{definition}\label{def:fastn}
\begin{align*}
	& \fastn{0}(x) = 0 \\
	& \hat{\xi}^{\beta +1}(x) = \begin{cases}
														\fastn{\beta}(x) & \text{if } x < \gcode{\beta+1}\\
														\min \set{t}{ \forall[i < x]\left(\recfnl{i}{\zeron{\beta}}{i}\conv \iff \recfnl{i}{\zeron{\beta}}{i}\conv[t] \right) } & \text{otherwise}
													\end{cases} \\
		& \fastn{\beta+1}(x)=\max(\hat{\xi}^{\beta +1}(x), \sup_{\gamma \kleeneleq \beta} \fastn{\gamma}(x)) \\
		\shortintertext{For \( \lambda \kleeneleq \alpha \) a limit}
	& \fastn{\lambda}(x) = \sup_{\beta \kleenel \lambda} \fastn{\beta}(x) = \sup_{\substack{ \beta \kleenel \lambda \\ \gcode{\beta} \leq x}} \fastn{\beta}(x)
\end{align*}
We relativize this notion by setting \(	\fastn(h){0}=h \) and building \( \fastn(h){\alpha} \) as above.  

\end{definition}
Note that as the notations below some given notation \( \beta \) can be effectively computed from \( \zeroj \) these supremums can be easily deciphered by any set computing \( \zeroj \).  With this in mind the following properties should be straightforward to verify so are presented without proof.

\begin{lemma}\label{lem:fastn}
	For each \( \beta \kleeneleq \alpha \), \( h \in \baire \) \( \fastn(h){\beta} \) is uniformly computable from \( \jumpn{h}{\beta} \) and has the following properties.
	\begin{arabiclist}
		\item \( \fastn{\beta} \) is a uniform self-modulus for \( \zeron{\beta} \) and the functional witnessing this uniformity is itself uniform in \( \beta \).

		\item \( \forall[\gamma \kleenel \beta]\left( \fastn(h){\gamma} \funl \fastn(h){\beta} \right) \).
		\item There is a stagewise approximation \( \fastn[s](h){\beta+1} \)  uniformly computable in \( \jumpn{h}{\beta} \) and strictly increasing in \( s \) such that \( \lim_{s\to\infty} \fastn(h)[s]{\beta+1}(x)=\fastn(h){\beta+1}(x) \).\label{lem:fastn:increasing}
		\item The predicate \( \fastn(h){\beta}(x) \geq y \) is \( \Csigman(h){\beta} \).\label{lem:fastn:sigman-geq}
	\end{arabiclist}
\end{lemma}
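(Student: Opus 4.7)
The plan is to prove all four parts simultaneously by effective transfinite induction on $\beta$ following the three clauses of the defining recursion, closed by the recursion theorem to secure uniformity in $\beta$. The base $\beta=0$ is immediate since $\fastn(h){0}=h$, so the real content lies at the successor and limit clauses, where the parts feed one another: part 2 is used in establishing part 1 (so that $f$ majorizing $\fastn(h){\beta+1}$ also majorizes $\fastn(h){\beta}$), the modulus clause of part 1 justifies the stage simulation used in part 3, and part 4 falls out by unfolding the definition against the inductive $\Csigman(h){\gamma}$ bounds.

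For the heart of part 1 at a successor $\beta+1$, given $f \fung \fastn(h){\beta+1}$ the inductive hypothesis supplies a uniform functional extracting $\jumpn{h}{\beta}$ from $f$. To decide $\jumpn{h}{\beta+1}(i)$ I pick $x = \max(i+1,\gcode{\beta+1})$ and run $\recfnl{i}{\jumpn{h}{\beta}}{i}$ for $f(x)$ stages; this is correct because for $x \geq \gcode{\beta+1}$ the quantity $\hat{\xi}^{\beta+1}(x)$ is by definition the settling stage for every convergent $\recfnl{i}{\jumpn{h}{\beta}}{i}$ with $i<x$, and $f(x) \geq \hat{\xi}^{\beta+1}(x)$. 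At a limit $\lambda$, any $f \fung \fastn(h){\lambda}$ majorizes each $\fastn(h){\beta}$ for $\beta \kleenel \lambda$ (at arguments of sufficient size) by the limit clause, so the inductive functionals recover $\jumpn{h}{\beta}$ uniformly in $\beta$ and their effective join yields $\jumpn{h}{\lambda}$. Parts 2 and 4 follow routinely: the $\max$ and $\sup$ in the defining clauses give part 2 directly, while for part 4, $\hat{\xi}^{\beta+1}(x) \geq y$ unpacks to the $\Sigma^0_1(\jumpn{h}{\beta}) = \Csigman(h){\beta+1}$ statement that some $i < x$ has $\recfnl{i}{\jumpn{h}{\beta}}{i}$ converging but not within $y$ stages, which disjoins harmlessly with the other inductive $\Csigman(h){\beta+1}$ clauses; limits are computable disjunctions of $\Csigman(h){\gamma}$ facts. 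For part 3, set $\fastn[s](h){\beta+1}(x)$ to be the maximum of $s$, the inductive $\fastn[s](h){\gamma}(x)$ for $\gcode{\gamma} \leq x$, and the largest $t \leq s$ at which some $\recfnl{i}{\jumpn{h}{\beta}}{i}$ with $i < x$ has been observed to converge; the $s$ summand forces strict monotonicity and convergence to the intended value holds because the set of convergent computations eventually stabilizes.

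The main obstacle is organizational rather than conceptual: threading uniform indices in $\beta$ through all three clauses so that a single $\zeroj$-computable procedure, on input a notation $\beta$, simultaneously outputs the extraction functional for part 1 and the approximation for part 3 built from the indices supplied at lesser notations, with the recursion theorem closing the induction. The second equality in the limit clause is extracted from the $x < \gcode{\beta+1}$ branch of $\hat{\xi}^{\beta+1}$, which pins $\fastn(h){\delta+1}(x) = \fastn(h){\delta}(x)$ for $x < \gcode{\delta+1}$ and so lets notations of large code be discarded from the supremum without loss.
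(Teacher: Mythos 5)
The paper presents this lemma without a proof, remarking only that the listed properties ``should be straightforward to verify,'' so there is no paper argument to compare against; I evaluate your proposal on its own terms.

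Your overall strategy of effective transfinite induction is sound, and parts 1, 2, and 4 are handled correctly. The successor-case argument for part 1 (pick \( x = \max(i+1, \gcode{\beta+1}) \), recover \( \jumpn{h}{\beta} \) from \( f \) by the inductive functional, and run \( \recfnl{i}{\jumpn{h}{\beta}}{i} \) for \( f(x) \) steps) is exactly right, and the limit case is handled by the observation that \( f \fung \fastn(h){\lambda} \) gives \( f \fung \fastn(h){\beta} \) for every \( \beta \kleenel \lambda \) at every argument. Note your parenthetical ``at arguments of sufficient size'' is unnecessarily weak: since \( \fastn(h){\lambda}(x) \geq \fastn(h){\beta}(x) \) for all \( x \), the majorization holds everywhere. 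Your treatment of the equality of the two suprema in the limit clause correctly identifies the key observation (\( \fastn(h){\delta+1}(x) = \fastn(h){\delta}(x) \) whenever \( x < \gcode{\delta+1} \)), though a clean argument still requires a transfinite induction showing that \( \fastn(h){\beta}(x) \) is always attained at some notation of code at most \( x \).

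The genuine gap is in part 3. You include \( s \) as a term in the maximum defining \( \fastn[s](h){\beta+1}(x) \) and assert that this ``forces strict monotonicity'' while ``convergence to the intended value holds because the set of convergent computations eventually stabilizes.'' These two claims are jointly impossible: if \( \fastn[s](h){\beta+1}(x) \geq s \) for every \( s \), then \( \lim_{s \to \infty} \fastn[s](h){\beta+1}(x) = \infty \), not the finite value \( \fastn(h){\beta+1}(x) \). More generally, no natural-number-valued sequence that is strictly increasing in \( s \) at a fixed \( x \) can converge to a finite limit; a correct stagewise approximation must be merely non-decreasing in \( s \) (drop the \( s \) summand and take only the observed settling times together with the directly computable \( \fastn(h){\beta}(x) \)). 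If you believe the lemma's phrase ``strictly increasing in \( s \)'' must be satisfied literally and pointwise, you should instead flag that it contradicts the convergence clause rather than silently producing a divergent approximation.
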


Note that all of the above relativizes to \( \fastn[][h]{\beta} \).  We can now formally describe the general relation between \( \pizn{\beta+1} \) singletons and uniform moduli.  

\begin{theorem}\label{thm:uniform-modulus-in-beta-jumps}
	\( \fastn(h){\beta} \) is a uniform modulus for \( h \) if and only if \( h \) is a \( \pizn{\beta+1} \) singleton.
\end{theorem}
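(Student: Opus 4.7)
The plan is to prove the two implications separately, using the classical Kuznecov--Trahtenbrot--Myhill result (the case \( \beta = 0 \): \( h \) is a \( \pizn{1} \) singleton iff \( h \) is a uniform self-modulus) as a template and lifting it through the tower of lemma \ref{lem:fastn}.

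For the \( (\Leftarrow) \) direction, assume \( h \) is a \( \pizn{\beta+1} \) singleton via a \( \zeron{\beta} \)-computable tree \( T \) with \( [T] = \set{h}{} \).  Any \( f \fung \fastn(h){\beta} \) pointwise dominates \( h \) (since \( \fastn(h){0} = h \) and the tower is monotone by lemma \ref{lem:fastn}), and by the relativized uniform self-modulus property remarked after that lemma we may uniformly compute \( \zeron{\beta} \), hence \( T \), from \( f \).  Form the \( f \)-bounded subtree \( T^{f} = \set{\sigma \in T}{\sigma(i) \leq f(i) \text{ for all } i < \lh{\sigma}} \): it is finitely branching with \( h \in [T^{f}] \) and \( [T^{f}] \subseteq [T] = \set{h}{} \), so \( [T^{f}] = \set{h}{} \).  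By König's lemma, \( T^{f} \) has finite height above any \( \sigma \in T^{f} \) incompatible with \( h\restr{\lh{\sigma}} \), so for each \( n \) a least level \( L_n \) exists at which every surviving node shares a common initial segment of length \( n+1 \); let \( \Phi^{f}(n) \) search for \( L_n \) and return that value.

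For the \( (\Rightarrow) \) direction, assume \( \Phi \) witnesses that \( \fastn(h){\beta} \) is a uniform modulus for \( h \), and build a \( \zeron{\beta} \)-computable tree \( T \) with \( [T] = \set{h}{} \).  Declare \( \sigma \in T \) iff for every \( n < \lh{\sigma} \) and every \( \tau \in \omega^{\lh{\sigma}} \) belonging to a \( \zeron{\beta} \)-computable family \( \Theta_{\sigma} \) (designed to include \( \fastn(\sigma'){\beta}\restr{\lh{\sigma}} \) for each finite \( \sigma' \in \omega^{\lh{\sigma}} \) pointwise bounded by \( \sigma \), a finite \( \zeron{\beta} \)-uniformly accessible list): any computation \( \Phi^{\tau}(n) \) converging in at most \( \lh{\sigma} \) steps agrees with \( \sigma(n) \).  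Bounded quantification keeps the condition \( \zeron{\beta} \)-decidable and the tree property is immediate.  To see \( h \in [T] \), note that every \( \tau \in \Theta_{h\restr{\lh{\sigma}}} \) extends to a function majorizing \( \fastn(h){\beta} \), so \( \Phi^{\tau}(n) = h(n) \) whenever it converges.  To see \( [T] \subseteq \set{h}{} \), note that for any \( g \neq h \) disagreeing with \( h \) at some \( n \) and any sufficiently large \( \lh{\sigma} \), the witness \( \tau = \fastn(h){\beta}\restr{\lh{\sigma}} \) lies in \( \Theta_{g\restr{\lh{\sigma}}} \) and yields \( \Phi^{\tau}(n) = h(n) \neq g(n) \), ejecting \( g\restr{\lh{\sigma}} \) from \( T \).

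The main obstacle is arranging \( \Theta_{\sigma} \) to be \( \zeron{\beta} \)-computably definable yet eventually capturing \( \fastn(h){\beta}\restr{\lh{\sigma}} \), since \( \fastn(h){\beta} \) itself is only \( \jumpn{h}{\beta} \)-computable.  The resolution exploits that \( \fastn(\sigma'){\beta} \) for a finite \( \sigma' \) is uniformly \( \zeron{\beta} \)-computable, and that among the finitely many \( \sigma' \in \omega^{\lh{\sigma}} \) pointwise bounded by \( \sigma \) the candidate \( \sigma' = h\restr{\lh{\sigma}} \) appears once \( \sigma \geq h\restr{\lh{\sigma}} \) pointwise.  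Care is needed to ensure \( \fastn(\sigma'){\beta}(i) \) for \( i < \lh{\sigma} \) queries \( \sigma' \) only within its defined range, which can be arranged by padding \( \sigma' \) with default values consistent with \( \fastn \)'s recursive definition.
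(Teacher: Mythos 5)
Your \( (\Leftarrow) \) direction is essentially the paper's argument: recover \( \zeron{\beta} \) from any \( f \fung \fastn(h){\beta} \), cut down to the \( f \)-bounded subtree, and apply K\"onig's lemma. The minor reformulation of ``\( \pizn{\beta+1} \) singleton'' as ``unique path through a \( \zeron{\beta} \)-computable tree'' is harmless.

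Your \( (\Rightarrow) \) direction, however, has a genuine gap, and it is a different route than the paper takes. You try to directly construct a \( \zeron{\beta} \)-computable tree \( T \) with \( [T]=\set{h}{} \), with the membership condition mediated by a family \( \Theta_\sigma \) that is supposed to contain strings \( \fastn(\sigma'){\beta}\restr{\lh{\sigma}} \) for finite \( \sigma' \) bounded by \( \sigma \). The object \( \fastn(\sigma'){\beta} \) is not well-defined for a finite string \( \sigma' \): by definition \( \fastn(\cdot){\beta} \) is computed from the \( \beta \)-th jump of its argument, and the \( \beta \)-th jump depends on the entire infinite real, not on a finite initial segment. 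Your suggested fix of ``padding with default values'' does not resolve this: if you pad \( \sigma' \) to \( \sigma'\concat 0^\omega \), say, then \( \fastn(\sigma'\concat 0^\omega){\beta} \) is indeed \( \zeron{\beta} \)-computable but bears no useful relation to \( \fastn(h){\beta} \) --- it is essentially \( \fastn{\beta} \) with a finite perturbation. Consequently the strings in \( \Theta_{h\restr{\lh{\sigma}}} \) need not extend to functions majorizing \( \fastn(h){\beta} \), so the claim that every \( \Phi^\tau(n) \) converging on such a \( \tau \) equals \( h(n) \) is unjustified, and \( h\restr{\lh{\sigma}} \) may be wrongly ejected from \( T \). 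This is not a detail to be patched: you are attempting to express ``\( \sigma \) extends to a real \( g \) with \( \sigma \fung \fastn(g){\beta} \)'' as a \( \zeron{\beta} \)-decidable condition on the finite string \( \sigma \) alone, which is the essential difficulty.

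The paper's proof sidesteps this entirely. Rather than constructing a tree, it writes down a \( \Cpin{\beta+1} \) formula \( \phi(g) \) which, roughly, asserts: for every \( \sigma \) and \( x \), if \( \recfnl[s]{}{\sigma}{x}\conv\neq g(x) \) then \( \sigma \) fails to majorize \( \fastn(g){\beta} \). The crucial point is that the disjunct ``\( \sigma \) fails to majorize \( \fastn(g){\beta} \)'' is, by part \ref{lem:fastn:sigman-geq} of lemma \ref{lem:fastn}, a \( \Csigman{\beta} \) formula \emph{relativized to the free variable \( g \)}. Since a \( \Cpin{\beta+1} \) formula is allowed to quantify over and query the real it is describing, no finite-string surrogate for \( \fastn(h){\beta} \) is needed. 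Your attempt commits to the stronger claim that \( h \) is the unique path through a \( \zeron{\beta} \)-computable tree --- i.e.\ a \( \Pi^0_1(\zeron{\beta}) \) singleton --- which is both unnecessary for the theorem and not achievable by the method you propose. To repair your proof, abandon the tree construction in the forward direction and write the defining formula directly, using the \( \Csigman{\beta} \) predicate for \( \fastn(g){\beta}(x) > \sigma(x) \) applied to the free variable.
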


\begin{proof}
	Suppose \( \fastn(h){\beta} \) is a uniform modulus for \( h \) witnessed by the reduction \( \recfnl{}{}{} \).  By part \ref{lem:fastn:sigman-geq} of lemma \ref{lem:fastn} we note that there is a \( \Csigman(h){\beta} \) formula \( \psi(\sigma) \) asserting that there is some \( x < \lh{\sigma} \) and \( \fastn(h){\beta}(x) > \sigma(x) \).  Thus the formula \( \phi(g) \) defined below is equivalent to a \( \Cpin{\beta+1} \) formula
\[
\phi(g)\eqdef\forall[\sigma \in \wstrs]\forall[s]\forall[x]\left(\recfnl[s]{}{\sigma}{x}\diverge \lor \recfnl[s]{}{\sigma}{x}=g(x) \lor \psi(\sigma)   \right)
\]
Clearly \( \phi(h) \) holds as if \( \sigma \fung \fastn(h){\beta} \) and \( \recfnl{}{\sigma}{x}\conv \) it must have value \( g(x) \).  But if \( \hat{h}\neq h \) then pick \( x \) with \( \hat{h}(x)\neq h(x) \) and \( \sigma =\fastn(h){\beta}\restr{l} \) where \( l > \use{\recfnl{}{\fastn(h){\beta}}{x}} \).  Such \( \sigma \) witnesses \( \lnot\phi(\hat{h}) \) so \( h \) is a  \( \pizn{\beta+1} \) singleton.

Conversely suppose that \( \phi \in \Cpin{\beta+1} \) has unique solution \( h \).  We may put \( \phi \) in the form \( \forall[\sigma \subset h]\psi(\sigma) \) where \( \psi \) is a \( \Csigman{\beta} \) formula not mentioning \( h \).  Now given \( g \fung \fastn(h){\beta}  \) we can uniformly recover \( \zeron{\beta} \) from \( g \) and thus compute a tree \( T \) consisting of all those \( \sigma \funl g \) for which \( \psi(\sigma) \) holds.  As \( h \funl \fastn(h){\beta} \funl g \) \( h \) is a path through \( T \) and as any path through \( T \) would satisfy \( \phi \) it is unique.  As \( T \) is finitely branching we can avail ourselves of K\"{o}enig's lemma to establish that \( g \) uniformly computes \( h \).
\end{proof}

Note that the specific form of \( \fastn(h){\beta+1} \) isn't important only that \( \fastn(h){\beta+1} >> h \) and that \( \fastn(h){\beta+1} \) is a uniform modulus for  \( \zeron{\beta+1} \).

This suffices to give a uniform modulus for every hyperarithmetic function and it is easy to see (as in \cite{SlamanModulus}) that \( h \in \baire \) has a uniform modulus if and only if \( h \) is \( \deltaOneOne \).  While Solovay finally classified those functions with some modulus of computation in \cite{HyperEncodable} using a different method we follow the approach taken in \cite{SlamanModulus} using Hechler style forcing conditions to demonstrate the following lemma.

\begin{lemma}[Slaman and Groszek]\label{lem:mod-imp-uniform}
If \( g \) has a modulus of computation  than \( g \) has a uniform modulus of computation
\end{lemma}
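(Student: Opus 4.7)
The plan is to use Hechler-style forcing to convert the hypothesized non-uniform modulus $h$ into a uniform one. Fix a countable $\omega$-model $M$ containing $g$, $h$, and enough arithmetic. Hechler conditions in $M$ are pairs $(s, p)$ with $s \in \wstrs$ and $p \colon \omega \to \omega$ a member of $M$; extension is defined by $(s', p') \leq (s, p)$ iff $s' \supseteq s$, $p' \geq p$ pointwise, and $s'(k) \geq p(k)$ for $k \in [\lh{s}, \lh{s'})$. An $M$-generic filter determines a function $G$ that majorizes every function in $M$, so in particular $G \fung h$, and by hypothesis $G$ then computes $g$, say $g = \recfnl{e}{G}{}$ for some index $e$. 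A standard density argument yields a condition $(s, p) \in M$ that forces $\recfnl{e}{\dot{G}}{} = g$.

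From such an $(s, p)$ I would manufacture both the uniform modulus $h^*$ and the fixed functional $\Psi$. Define $\Psi^f(n)$ to compute $\recfnl{e}{s \concat f\restr{[\lh{s},k)}}{n}$ for the first $k$ at which convergence is observed, with the finite string $s$ hardcoded into the index of $\Psi$; this sidesteps the fact that an arbitrary $f \fung h^*$ need not literally extend $s$ as a string. Set $h^*(k) = s(k)$ for $k < \lh{s}$ and arrange $h^*(k) \geq p(k)$ for $k \geq \lh{s}$. Correctness of the returned value is then immediate from the forcing: if $\Psi^f(n) = m$ is witnessed by $\tau = s \concat f\restr{[\lh{s}, k)}$ with $\recfnl{e}{\tau}{n}\conv m$, then $\tau \supseteq s$ and $\tau(j) = f(j) \geq h^*(j) \geq p(j)$ on $[\lh{s}, k)$, so $(\tau, p)$ is a legitimate Hechler extension of $(s, p)$ forcing $\recfnl{e}{\dot{G}}{n} = m$; since $(s, p)$ already forced this value to equal $g(n)$, necessarily $m = g(n)$.

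The main obstacle is ensuring that $\Psi^f(n)$ actually converges for every $f \fung h^*$ and every $n$, since Turing functionals are not monotone in their input values and so the forcing guarantee ``$(s, p) \frc \recfnl{e}{\dot{G}}{n}\conv$'' does not on its own yield convergence on a prefix of an arbitrary pointwise dominator of $p$. To overcome this I would enlarge $h^*$ using witness data harvested from the forcing: for each $n$, density inside $M$ supplies some extension $(s_n, p_n) \leq (s, p)$ with $\recfnl{e}{s_n}{n}\conv$, and $h^*$ is then modified so that $f \fung h^*$ allows $\Psi$, working only from the oracle $f$, to locate $s_n$ (or a substitute with the same convergence behaviour) inside a bounded search through prefixes of its hybrid string. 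The technical heart of the Slaman--Groszek argument is precisely the coordination of $\Psi$ and $h^*$ needed to carry this out --- either by direct coding of witnesses into $h^*$ or by using $f$ to drive a computable construction of an effectively $M$-generic filter below $(s, p)$ whose associated path is accessible as a subsequence of $f$. Once convergence of the search is secured, correctness combines with it to give $\Psi^f = g$ uniformly for all $f \fung h^*$, so $(\Psi, h^*)$ is the desired uniform modulus.
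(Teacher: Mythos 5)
Your proposal takes the same Hechler-forcing route that the paper sketches, but you frame it model-theoretically (a countable $\omega$-model $M$, an $M$-generic $G$, and a condition $(s,p)$ forcing $\recfnl{e}{\dot G}{} = g$) where the paper's sketch frames it as a diagonalization: conditions $(\sigma, q)$ with $q \fung h$ try to build $f \fung h$ not computing $g$, and the failure of that attempt is what yields the uniform reduction. These are two sides of the same coin --- the inability to diagonalize at $(s,p)$ against index $e$ is precisely the statement that $(s,p)$ forces $\recfnl{e}{\dot G}{} = g$. Your correctness argument for $\Psi^f$ (any convergence observed on a string $\tau$ with $s \subseteq \tau$, $\tau \geq p$ on the new part, and $\tau$ bounded by $f$, yields a legitimate Hechler extension of $(s,p)$ and so must agree with $g$) is exactly right.

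The gap you flag is genuine, and you are right that it is the technical heart of the argument: because Turing functionals are not monotone in the oracle, $(s,p)$ forcing the $\Pi^0_2$ statement ``$\recfnl{e}{\dot G}{}$ is total'' guarantees convergence only on $M$-generics, not on arbitrary $f \fung h^*$ that merely dominate $M$. Neither of the fixes you float resolves this as stated. The ``witness-coding'' fix runs into the problem that the density witnesses $s_n$ are finite strings of unbounded length, so there is no single $h^*$ that pointwise dominates all of them while still letting $\Psi$ locate $s_n$ by a bounded search below $f$. The ``$f$-driven generic filter'' fix needs a bound on the search for members of each dense set $D_n$, which $f$ does not obviously supply: the failure of the divergence option at $(s,p)$ gives, for every $p' \fung p$ and every $n$, a string $\tau \supseteq s$ with $\tau \fung p'$ and $\recfnl{e}{\tau}{n}\conv$, but that $\tau$ is in general \emph{above} rather than \emph{below} the envelope of a given $f$, so you cannot locate it by searching beneath $f$. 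The paper's own sketch (``their failure can only occur if there is some sufficiently fast growing $\hat h \fung h$ above which the reduction is uniform'') elides exactly this step. So while your reconstruction faithfully tracks the paper's approach and your correctness half is complete, the totality half remains open; the proposal is an honest expansion of the same sketch rather than a proof.
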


We quickly sketch the proof.  The conditions will be Cohen style conditions in \( \wstrs \) paired with some \( q \in \baire \) we commit to majorizing. If \( h \) is a modulus for \( g \) then the forcing conditions do their best to produce some \( f \fung h \) not computing \( g \) and their failure can only occur if there is some sufficiently fast growing \( \hat{h} \fung h \) above which the reduction is uniform.  Combining this result with the remark above yields Solovay's result.

\begin{theorem}[Solovay]\label{thm:modulus-hyper}
\( h \in \baire \) has a modulus of computation if and only if \( h \) is \( \deltaOneOne \).
\end{theorem}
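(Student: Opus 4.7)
The plan is to derive Solovay's theorem by composing Lemma \ref{lem:mod-imp-uniform} with the equivalence recorded in the paragraph preceding it: \( h \) has a uniform modulus if and only if \( h \in \deltaOneOne \). Both directions of the present theorem then reduce to short arguments.

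For the backward direction I would observe that if \( h \) is \( \deltaOneOne \) then \( h \Tleq \zeron{\alpha} \) for some computable ordinal notation \( \alpha \), so the condition \( g = h \) is equivalent to the \( \pin{\alpha+1} \) assertion \( \forall n\,(g(n) = h(n)) \), exhibiting \( h \) as a \( \pizn{\alpha+1} \) singleton. Theorem \ref{thm:uniform-modulus-in-beta-jumps} then supplies \( \fastn(h){\alpha} \) as a uniform modulus for \( h \), which is \emph{a fortiori} a modulus.

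For the forward direction I would assume \( h \) admits some (possibly non-uniform) modulus, apply Lemma \ref{lem:mod-imp-uniform} to upgrade it to a uniform modulus for \( h \), and invoke the cited equivalence to conclude \( h \in \deltaOneOne \).

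The real work thus lies inside Lemma \ref{lem:mod-imp-uniform}, whose proof is only sketched in the excerpt. Following that sketch, I would work with Hechler-style forcing conditions \( \langle \sigma, q\rangle \) pairing an initial segment \( \sigma \in \wstrs \) of the generic with a function \( q \in \baire \) that the final generic is committed to majorizing. Given a purported modulus \( h \) and a functional \( \Phi \), I would attempt via density arguments to build some \( f \gg h \) on which \( \Phi^f \) disagrees with \( h \); the only obstruction to success is the existence of a sufficiently fast \( \hat h \gg h \) above which \( \Phi \) computes \( h \) uniformly, and this \( \hat h \) supplies the desired uniform modulus. The main technical obstacle will be coordinating the majorization commitments carried by successive conditions with the diagonalization requirements, so that genericity can be maintained without violating the commitment to majorize \( h \).
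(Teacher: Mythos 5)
Your proposal is correct and follows exactly the paper's decomposition: the backward direction instantiates the remark ``this suffices to give a uniform modulus for every hyperarithmetic function'' via Theorem~\ref{thm:uniform-modulus-in-beta-jumps}, and the forward direction combines Lemma~\ref{lem:mod-imp-uniform} with the equivalence between uniform moduli and \( \deltaOneOne \) functions, which is precisely the ``combining this result with the remark above'' the paper gestures at. Your additional comments on the Hechler forcing inside Lemma~\ref{lem:mod-imp-uniform} also track the paper's sketch.
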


\subsection{Non-uniform Moduli}

The above results pose a very puzzling question:  All the natural examples of moduli are uniform moduli and every function with a modulus must have a uniform modulus so can the two notions come apart and if so by how much?  The remainder of this paper is devoted to showing that these two notions come apart as far as possible.  In particular we prove the following result.

\begin{theorem}\label{thm:nonu}
For each \( \alpha \in \kleeneO \) there is a a self-modulus \( \nusm \Tleq \zeron{\alpha} \) such that no \( f \ttleq \jumpn{\nusm}{\beta} \) for any \( \beta \kleenel \alpha \) is a uniform modulus for \( \nusm \).
\end{theorem}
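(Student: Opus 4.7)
By Theorem \ref{thm:uniform-modulus-in-beta-jumps} and the routine extension of its proof to arbitrary uniform moduli computable from \( \jumpn{\nusm}{\beta} \), the conclusion reduces to constructing a self-modulus \( \nusm \Tleq \zeron{\alpha} \) that is not a \( \pizn{\beta+1} \) singleton for any \( \beta \kleenel \alpha \).  My plan is to combine the Hechler-style forcing of Slaman--Groszek underlying Lemma \ref{lem:mod-imp-uniform}, relativized to \( \zeron{\alpha} \), with the disagreement preserving downwardly generic tower machinery from Lemmas \ref{lem:tree-minus-one-alt1} and \ref{lem:downward-tower-exists}.

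The construction begins by building a \( \zeron{\alpha} \)-computable tree \( \hat{T} \) via Hechler forcing over \( \zeron{\alpha} \), augmented with explicit splitting requirements, so that \( [\hat{T}] \) is perfect and every path is a (non-uniform) self-modulus.  Feeding \( \hat{T} \) into the Harrington construction yields a tower \( \seq{T_\beta}{\beta \kleeneleq \alpha} \) with \( T_0 \) computable and a \( \zeron{\alpha} \)-computable homeomorphism \( \treemap{\alpha}{0} \).  Choosing a \( \zeron{\alpha} \)-computable path \( \hat{\nusm} \in [T_\alpha] \) and setting \( \nusm = \treemap{\alpha}{0}(\hat{\nusm}) \) gives \( \nusm \Tleq \zeron{\alpha} \) by Lemma \ref{lem:tower-homeomorphism}.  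That \( \nusm \) is not a \( \pizn{\beta+1} \) singleton for any \( \beta \kleenel \alpha \) follows from Lemmas \ref{lem:psi-star-forcing} and \ref{lem:totally-generic}: if \( \phi \in \Cpin{\beta+1} \) had \( \nusm \) as its unique solution, \( \alpha \)-subgenericity would force some \( \sigma \subset \nusm \) in \( T_0 \) to force \( \phi \), but the perfection of \( [\hat{T}] \) transported through the homeomorphism produces a distinct \( \nusm' \supset \sigma \) in \( [T_0] \) which must also satisfy \( \phi \), a contradiction.

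The principal obstacle is verifying that the self-modulus property of paths in \( \hat{T} \) transfers to paths in \( T_0 \).  Since \( \treemap{\alpha}{0} \) is only a topological homeomorphism and shuffles function values freely, \( f \fung \nusm \) does not a priori imply that \( f \) dominates anything related to \( \hat{\nusm} \), so the Hechler-style non-uniform computation of \( \hat{\nusm} \) from a majorizing function does not transfer verbatim.  The strategy is to coordinate the construction of \( \hat{T} \) with Harrington's machinery so that the Hechler growth requirements are coded into positions of strings which the maps \( \theta^{\beta+1} \) preserve in a controlled way: for instance, by reserving alternating columns in strings of \( \hat{T} \) for Hechler growth and for diagonalization-plus-splitting, and arguing that \( \theta^{\beta+1} \) respects domination on the growth columns.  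Given \( f \fung \nusm \), one then restricts \( T_0 \) to its \( f \)-bounded (hence finitely branching and \( f \)-computable) subtree, applies K\"{o}nig's lemma together with the transported growth data to recover \( \hat{\nusm} \), and obtains \( \nusm \) via \( \treemap{\alpha}{0} \) using a non-uniform \( \nusm \)-specific index.
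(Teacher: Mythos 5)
Your proposal correctly identifies that Harrington's tower machinery is the engine of the proof and that the non-uniform-modulus half will follow from genericity plus non-isolation, but the other half of your plan---building a base tree \( \hat{T} \) of self-moduli by Hechler forcing over \( \zeron{\alpha} \) and then trying to transport the self-modulus property down the tower---is not how the paper proceeds, and the transfer step you flag as ``the principal obstacle'' is a genuine gap, not a technicality.  The difficulty is that \( \treemap{\alpha}{0} \) preserves the tree topology but scrambles all arithmetic information about magnitudes: a path \( \hat{\nusm} \in [\hat{T}] \) being a self-modulus is a fact about which functions majorize it, and \( f \fung \treemap{\alpha}{0}(\hat{\nusm}) \) gives essentially no control over functions majorizing \( \hat{\nusm} \) itself, even with ``reserved columns.''  Your suggestion to re-engineer \( \theta^{\beta+1} \) to preserve domination on designated coordinates would in effect have to reinvent the conditions the paper imposes, and as stated it is not an argument.

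The paper's route is structurally different and avoids the transfer problem entirely.  It takes \( T_\alpha \) to be (a copy of) \( \wstrs \), so nothing about \( [T_\alpha] \) is a self-modulus to begin with.  Instead, it imposes two new conditions on the tower itself: each \( T_\beta \) must be \emph{uniquely \( \beta \) small} (Definition~\ref{def:uniquely-small}: any two paths that are both below \( \fastn{\beta} \) at a coordinate \( x \) must agree up through \( x \)), and each \( \theta^{\beta+1} \) must be \emph{largeness preserving} (Definition~\ref{def:largness-preserving}), which propagates down via Lemma~\ref{lem:treemap-largeness-preserving} to make \( \treemap{\beta}{0} \) largeness preserving.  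These are achieved by modifying the stagewise construction (Lemma~\ref{lem:tree-minus-one-small}).  The self-modulus property of \( \nusm = \treemap{\alpha}{0}(\hat{\nusm}) \) is then a dichotomy (Lemma~\ref{lem:singleton-or-jump}): given \( h \fung \nusm \), either \( \nusm \) is the only path of \( T_0 \) majorized by \( h \), and K\"onig's lemma on the \( h \)-bounded subtree computes \( \nusm \) from \( h \); or there is a second majorized path \( g \), in which case, by pulling \( g \) and \( \nusm \) up to \( T_\beta \) where they first differ and using unique smallness, one of \( g_\beta(x), \nusm_\beta(x) \) must exceed \( \fastn{\beta+1}(x) \), and largeness preservation together with the length bound of Lemma~\ref{lem:all-preimages-below} lets \( h \) read off a function dominating \( \fastn{\beta+1} \), hence \( h \Tgeq \zeron{\beta+1} \); iterating through \( \alpha \) gives \( h \Tgeq \zeron{\alpha} \Tgeq \nusm \).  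The non-uniformity is handled by Lemma~\ref{lem:generic-not-unif-sm} directly rather than by routing through \( \pizn{\beta+1} \)-singleton-ness as you propose.  Your Hechler idea is natural from the Slaman--Groszek perspective but is ultimately a red herring here: the self-modulus property cannot be imported from the top of the tower and must instead be manufactured by conditions on every level of the tower simultaneously.
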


We can now prove \ref{thm:nonu}.  We proceed by fixing some notation \( \alpha \) and describe in this section how to build a self-modulus \( \nusm \) with no uniform moduli computable from any \( \zeron{\beta}, \beta \kleenel \alpha \).  This requires walking a careful line between making \( \nusm \) unique enough that every faster growing function computes it but not so unique that they can do so uniformly.  Our approach
is to build \( \nusm \) as a highly `generic' function  that is nevertheless unique for all `small' functions majorizing it.  Any `large' function majorizing \( \nusm \) will have enough computational power to watch our construction of \( \nusm \) while the uniqueness of \( \nusm \) relative to the `small' functions majorizing \( \nusm \) will let them compute \( \nusm \).  Essentially large will mean dominating \( \fastn{\alpha} \) and \( \nusm \) will be built computably in \( \zeron{\alpha} \) leaving the rest of the construction to deal with small functions and to be sufficiently generic to avoid small uniform moduli.  The level of genericity required is given by the next lemma.

\begin{lemma}\label{lem:generic-not-unif-sm}
If \( g \) is \( \alpha \) generic on \( T \Tleq \Tzero \) and non-isolated then no \( h \ttleq \jumpn{g}{\beta}, \beta \kleenel \alpha \) is a uniform modulus for \( g \).
\end{lemma}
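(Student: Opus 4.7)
The plan mirrors Harrington's refutation of McLaughlin's conjecture (corollary \ref{cor:mcl-false}): I will capture the uniform-modulus hypothesis as a $\Cpin{\beta+1}$ statement about $g$, use the $\alpha$-genericity of $g$ to force it along a finite initial segment, and then invoke non-isolation to produce a second path on which the same statement must hold, yielding two distinct values for $\Phi$ on a common fast-growing majorant.

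First I would encode the hypothesis. Suppose toward a contradiction that $h=\Psi^{\jumpn{g}{\beta}}$ for some truth-table reduction $\Psi$ and that $h$ is a uniform modulus for $g$ via the Turing functional $\Phi$, with $\beta \kleenel \alpha$. Consider the formula
\[
\phi(X)\eqdef\forall \sigma, s, x\left(\Phi^{\sigma}_{s}(x)\diverge \lor \Phi^{\sigma}_{s}(x)=X(x) \lor \exists i<\lh{\sigma}\,\sigma(i)<\Psi^{\jumpn{X}{\beta}}(i)\right).
\]
Since $\Psi$ is truth-table and $\jumpn{X}{\beta}$ is $\Csigman[X]{\beta}$, for each $i$ the condition ``$\Psi^{\jumpn{X}{\beta}}(i)=y$'' is a bounded Boolean combination of $\Csigman[X]{\beta}$ queries, so $\phi$ is $\Cpin{\beta+1}$ in $X$; the uniform-modulus hypothesis makes $\phi(g)$ hold.

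Next I would force $\phi$. Because $\beta\kleenel\alpha$ gives $\beta+1\kleeneleq\alpha$ and $g$ is $\alpha$-generic on $T$, $g$ decides $\phi$; since $\phi(g)$ holds, some $\sigma_0\subset g$ in $\pruneTree{T}$ strongly forces $\phi$ on $T$. A routine induction on computable-infinitary complexity shows that strong forcing on $\pruneTree{T}$ transfers to truth at every $g'\in[T]$ extending $\sigma_0$, the key point being that any such $g'$ has all finite initial segments in $\pruneTree{T}$, so any putative witness to a forbidden $\Csigman[X]{1}$-subformula would itself lie in the very set $\sigma_0$ blocks. By non-isolation pick $g'\in[T]$ with $g'\supset\sigma_0$ and $g'\neq g$; then $\phi(g')$ holds.

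To close the argument let $h'=\Psi^{\jumpn{g'}{\beta}}$ and $f=\max(h,h')$. Since $f\fung h$, the uniform-modulus hypothesis makes $\Phi^{f}$ total with $\Phi^{f}=g$. Since also $f\fung h'$, any initial segment $\sigma\subset f$ on which $\Phi^{\sigma}(x)\conv$ satisfies $\sigma\fung h'\restr{\lh{\sigma}}$, so $\phi(g')$ forces $\Phi^{\sigma}(x)=g'(x)$; because $\Phi^{f}$ is total, this gives $\Phi^{f}=g'$ as well, hence $g=g'$, a contradiction. The main technical obstacle is the complexity bookkeeping that keeps $\phi$ at $\Cpin{\beta+1}$ rather than escalating to $\Cpin{\beta+2}$, which is exactly where the truth-table (rather than merely Turing) hypothesis on $h$ is essential; a secondary point is the truth-at-paths property of strong forcing on $\pruneTree{T}$, which falls out of the standard induction on computable-infinitary complexity precisely because every path in $[T]$ factors through $\pruneTree{T}$.
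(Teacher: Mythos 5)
Your overall plan --- encode the uniform-modulus hypothesis as a \( \Cpin{\beta+1} \) fact about \( g \), force it, and play it off against a second path supplied by non-isolation --- matches the paper's proof up to negation: the paper records the \( \Csigman{\beta+1} \) statement that \( \Phi \) disagrees with \( g \) somewhere above \( h \), argues it is forced false, and derives the same contradiction, with your final clash via \( f=\max(h,h') \) being the paper's ``fix some \( f \fung h, \hat{h} \)''. Your complexity bookkeeping (that the truth-table hypothesis keeps the formula at level \( \beta+1 \)) is correct and is exactly what the paper uses.

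The one genuine gap is the sentence asserting that ``a routine induction shows that strong forcing on \( \pruneTree{T} \) transfers to truth at every \( g'\in[T] \) extending \( \sigma_0 \).'' This is false already at level two. Take \( T=2^{<\omega} \) and \( \chi=\LLor_n \forall m>n\,(g(m)=0) \), a \( \Csigman{2} \) sentence. No node of \( T \) forces any disjunct \( \forall m>n\,(g(m)=0) \) (every node has an extension with a later \(1\)), so the empty string strongly forces \( \lnot\chi \); yet \( \chi \) holds along the path \( 0^\omega \). The mechanism you sketch --- that a putative witness to a \( \Csigman[X]{1} \)-subformula would lie in the blocked set --- handles only the base case; at the \( \Cpin{\gamma} \) step for \( \gamma\geq 2 \), a failure of \( \lnot\chi \) along \( g' \) need not be witnessed by any node forcing \( \chi \), and ruling that out is exactly what \( \gamma \)-genericity of \( g' \) provides. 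So \( \phi(g') \) does not follow merely from \( g'\supset\sigma_0 \) and \( \sigma_0\frc[T]\phi \). The paper's own proof is terse at the identical step (``Since \( \hat g\supset\sigma \) we know \( \phi(\hat g) \) holds''), but in the intended application \( T \) is an \( \alpha \)-reduct on which \emph{every} path is \( \alpha \)-generic, so the auxiliary path is automatically \( \beta+1 \)-generic and forcing does yield truth for it. The repair for your write-up is to choose \( g' \) to be a second \( \alpha \)-generic path through \( T \) extending \( \sigma_0 \) --- which non-isolation does supply in the setting where the lemma is invoked --- rather than appealing to an unconditional forcing-to-truth transfer.
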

\begin{proof}
	For contradiction fix \( g, h \) as in the lemma,  \( \Psi \) a truth-table functional and \( \Phi \) a Turing functional such that \( h=\Psi(\jumpn{g}{\beta}) \) and for every \( f \fung h \) \( \Phi(f)=g \).  Now let \( \phi \) be the \( \Csigman{\beta+1} \) formula defined below asserting that for some \( \sigma \fung h \)  \( \Phi(\sigma) \) disagrees with \( g \).
	\begin{equation}
		 \phi\eqdef\exists({\tau \fung \Psi(\jumpn{g}{\beta})})\exists({x})\left(\Phi(\tau;x)\conv[\lh{\tau}] \neq g(x)\right)
	\end{equation}
	If \( \phi(g) \) then there would be some \( f \fung \Psi(\jumpn{g}{\beta})=h \) extending \( \tau \) so \( \Phi(f)\neq g \).  Thus \( \phi(g) \) and as \( \beta+1 \kleeneleq \alpha \) we must have some \( \sigma \subset g \) with \( \sigma \frc[T] \phi \).  As \( g \) is non-isolated we can fix another path \( \hat{g} \neq g \) on \( T \) extending \( \sigma \).  Let \( \hat{h}=\Psi(\jumpn{\hat{g}}{\beta}) \) which as \( \Psi \) is a truth table reduction must be total.  Since \( \hat{g} \supset \sigma \) we know \( \phi(\hat(g)) \) holds.  Now fix some \( f \fung h, \hat{h} \).  By assumption \( \Phi(f)=g\neq \hat{g} \) contradicting the fact that \( \Phi(f) \) agrees with \( \hat{g} \) everywhere both are defined.
\end{proof}

Thus, we can make \( \nusm \) sufficiently generic for our purposes by building it as a non-isolated path through some \( \alpha \) reduct \( T_0 \).  If we had simply made \( \nusm \) fully \( \alpha \) generic then it wouldn't be a self-modulus at all since if \( g \) is a non-isolated path through \( T \) and \( 2 \)-generic on \( T \) \textit{relative} to \( T \) then \( g \) is not a self-modulus.  Ideally we would simply manipulate \( T_0 \) so that if some \( h \fung f,g \in [T_0] \) then \( h \fung \fastn{\alpha} \) ensuring that if \( h \fung \nusm \) either \( \set{\sigma \in T_0}{\sigma \funl h} \) has unique path \( \nusm \) or \( h \Tgeq \zeron{\alpha} \Tgeq \nusm \).  However, \( T_0 \) must be computable so this condition is too strong.  Instead we will impose a scrambled version of this condition.

\begin{definition}\label{def:uniquely-small}
	A tree \( T \) is uniquely \( \beta \) small if
	\begin{equation}
		\forall[{f,g \in [T]}]\forall[x]\left( f(x), g(x) \leq \fastn{\beta}(x) \implies f\restr{x+1}=g\restr{x+1}   \right)
	\end{equation}
\end{definition}

Ultimately we must project the impact of making \( T_{\beta+1} \) uniquely \( \beta \) small down to \( T_0 \) without while retaining the ability to extract \( \zeron{\beta+1} \) from \( \zeron{\beta} \) and \( h \) where \( h \fung f,g \) for some \( f,g \in [T_0] \).  This requires we further restrict our choice of reduction functions \( \theta^{\beta+1} \).

\begin{definition}\label{def:largness-preserving}
Say a monotonic map \( \theta\map{T^{*}}{T} \) is largeness preserving if every \( \sigma' \in \rng \theta \) is non-decreasing and if \( \theta(\sigma\concat[i])=\sigma'\concat\tau' \) then every \( x \in \dom \tau' \) satisfies \( \tau'(x) \geq i \).
\end{definition}

\begin{definition}\label{def:uniquely-small-tower}
Say a downwardly generic tower \(  \seq{T_\beta}{\beta \kleeneleq \alpha}  \) is a uniquely small tower (of length \( \alpha \)) if every \( T_\beta \) is uniquely \( \beta \) small and every \( \theta^{\beta+1} \) in part \ref{def:downwardly-generic:map} of definition \ref{def:downwardly-generic:map} is largeness preserving.  Say \( T_0 \) is an \( \alpha \) uniquely small reduct  if it occurs in some uniquely small tower of length \( \alpha \).
\end{definition}

We now fix a uniquely small tower \(  \seq{T_\beta}{\beta \kleeneleq \alpha}  \) of length \( \alpha \) for the remainder of the proof. 

\begin{lemma}\label{lem:treemap-largeness-preserving}
	For every \( \beta \kleeneleq \alpha \) the monotonic function \( \treemap{\beta}{0} \) is largeness preserving.
\end{lemma}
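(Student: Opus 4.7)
The plan is to prove the claim by transfinite induction on $\beta\kleeneleq\alpha$, using the recursive description of $\treemap{\beta}{0}$ supplied by lemma \ref{lem:tower-homeomorphism}. The base case reduces to $\treemap{1}{0}=\theta^{1}$, which is largeness preserving by hypothesis on the uniquely small tower.

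For the successor case I would first isolate the auxiliary combinatorial observation that the composition of two largeness preserving monotonic maps is again largeness preserving. Granted this, the step $\treemap{\beta+1}{0}=\treemap{\beta}{0}\compfunc\theta^{\beta+1}$ follows immediately from the inductive hypothesis together with the assumption that $\theta^{\beta+1}$ is largeness preserving. The auxiliary lemma itself is straightforward: the non-decreasing clause is inherited from the outer map because the range of the composition is contained in the range of the outer map; for the extension clause one sets $\mu=\theta^{\beta+1}(\sigma)$, writes $\theta^{\beta+1}(\sigma\concat[i])=\mu\concat\nu$ with every entry of $\nu$ at least $i$ (by largeness preservation of $\theta^{\beta+1}$), and then expands $\treemap{\beta}{0}(\mu\concat\nu)$ one symbol at a time, applying largeness preservation of $\treemap{\beta}{0}$ at each one-symbol extension by $\nu(j)$ to conclude that the added block decomposes as a concatenation of pieces whose entries are each at least $\nu(j)\geq i$.

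The limit case should require no additional combinatorial work. By lemma \ref{lem:tower-homeomorphism}, for $\lambda$ a limit the value $\treemap{\lambda}{0}(\sigma)$ coincides with $\treemap{\kleenelim{\lambda}{l}}{0}(\sigma)$ for every sufficiently large $l$, and largeness preservation is a purely local condition on the pairs $(\sigma,\sigma\concat[i])$, so the inductive hypothesis for $\treemap{\kleenelim{\lambda}{l}}{0}$ transfers verbatim. I do not anticipate a genuine obstacle in the argument; the substantive content is the one-step composition lemma, which is a short finite combinatorial check, and the rest is transfinite bookkeeping that parallels the recursive structure of $\treemap{\beta}{0}$ already established for homeomorphism purposes in lemma \ref{lem:tower-homeomorphism}.
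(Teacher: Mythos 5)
Your proof is correct and takes essentially the same route as the paper: a transfinite induction (the paper phrases it as a least-failure argument, which is the contrapositive) with the successor step handled by the composition factorization $\treemap{\beta+1}{0}=\treemap{\beta}{0}\compfunc\theta^{\beta+1}$ plus the observation that largeness preservation is closed under composition, and the limit step handled by the local coincidence $\treemap{\lambda}{0}(\sigma)=\treemap{\kleenelim{\lambda}{l}}{0}(\sigma)$ for sufficiently large $l$ from part 3 of lemma \ref{lem:tower-homeomorphism}. You actually spell out the composition lemma, which the paper dismisses as ``straightforward to verify.''
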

\begin{proof}
	Suppose that \( \beta \) is the least failure.  If \( \beta=\gamma +1 \) then \( \treemap{\beta}{0}=\treemap{\gamma}{0} \compfunc \theta^{\gamma+1} \).
	It is straightforward to verify that the composition of two largeness preserving functions is largeness preserving yielding the contradiction.  Now suppose \( \beta \) is a limit.  Since \( \theta^{1} \) is largeness preserving we know that \( \treemap{\beta}{0} \) is non-decreasing.  But \( \treemap{\beta}{0}(\sigma\concat[i]) = \treemap{\gamma}{0}(\sigma\concat[i]) \) for some \( \gamma \kleeneleq \beta \)  by part \ref{lem:tower-homeomorphism:limits} of lemma \ref{lem:tower-homeomorphism} so by the minimality of \( \beta \) \( \treemap{\beta}{0}(\sigma\concat[i]) \) can't fail the other half of definition \ref{def:largness-preserving}.
\end{proof}

While the motivation for making \( \theta^{\beta+1} \) largeness preserving is to protect the encoding of \( \zeron{\alpha} \) in every pair of \( f, g \in [T_0] \) it also provides the following useful property.

\begin{lemma}\label{lem:all-preimages-below}
A bound \( l(n) \) on \( \lh{\treemap{\beta}{}(\sigma)} \) for those \( \sigma \) with \( \lh{\sigma} < n \) and \( \treemap{\beta}{}(\sigma) \funl h \) can be uniformly computed from \( \zeron{\beta} \Tplus h \).
\end{lemma}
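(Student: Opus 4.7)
The plan is to exploit the largeness-preservation of $\treemap{\beta}{}$, established in lemma \ref{lem:treemap-largeness-preserving}, together with the fact that $T_\beta$ and $\treemap{\beta}{}$ are both uniformly computable from $\zeron{\beta}$ (per lemma \ref{lem:tower-homeomorphism} and part \ref{def:downwardly-generic:beta-computable} of definition \ref{def:downwardly-generic}). The crucial observation is that if $\sigma, \sigma\concat[i] \in T_\beta$ and we write $\treemap{\beta}{}(\sigma\concat[i]) = \treemap{\beta}{}(\sigma) \concat \tau$ (where $\tau$ is nonempty by strict monotonicity), then largeness-preservation forces $\tau(0) \geq i$, so if additionally $\treemap{\beta}{}(\sigma\concat[i]) \funl h$ then $i \leq \tau(0) \leq h(\lh{\treemap{\beta}{}(\sigma)})$. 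This bounds the admissible successor values in terms of $h$ evaluated at a previously computed length.

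Armed with this inequality I would recursively enumerate the finite set of candidates. Set $S_0 = \{\emptyset\}$ and, at each stage $k < n$, let $S_{k+1}$ consist of all $\sigma \concat [i] \in T_\beta$ with $\sigma \in S_k$, $i \leq h(\lh{\treemap{\beta}{}(\sigma)})$, and $\treemap{\beta}{}(\sigma \concat [i]) \funl h$. Each of these checks is performed using $\zeron{\beta}$ (to compute $T_\beta$ and $\treemap{\beta}{}$) together with $h$ (to verify majorization and to produce the value $h(\lh{\treemap{\beta}{}(\sigma)})$). By induction every $\sigma \in T_\beta$ of length at most $k$ with $\treemap{\beta}{}(\sigma) \funl h$ belongs to $S_k$, so $l(n) = \max \set{\lh{\treemap{\beta}{}(\sigma)}}{\sigma \in S_n}$ is the desired bound.

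Finiteness of $S_k$ at each stage is automatic: $S_0$ is a singleton, and each $\sigma \in S_k$ has at most $h(\lh{\treemap{\beta}{}(\sigma)}) + 1$ successors in $S_{k+1}$, and each individual image $\treemap{\beta}{}(\sigma)$ is a well-defined finite string (computed from $\zeron{\beta}$). The uniformity of the construction in $n$ and in indices for $h$ and for $\treemap{\beta}{}$ is evident. The only step requiring genuine thought is verifying the inequality $i \leq h(\lh{\treemap{\beta}{}(\sigma)})$ from largeness preservation, together with ensuring $\tau$ is nonempty; once that is in hand the algorithm is immediate.
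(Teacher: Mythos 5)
Your argument is correct and is essentially the same induction the paper gives: build $S_k$ stagewise, use largeness preservation to bound the admissible successor values $i$ in terms of $h$ evaluated at a length already computed, and read off $l$ as the max over images. The only cosmetic differences are that you filter directly with $i \leq h(\lh{\treemap{\beta}{}(\sigma)})$ and $\treemap{\beta}{}(\sigma\concat[i]) \funl h$ where the paper uses the slightly looser test $\exists(x \leq l(n)+1)\left(h(x)\geq i\right)$; both are sound bounds for the same reason, and you also correctly write $T_\beta$ where the paper has an apparent typo $T_{\beta+1}$.
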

\begin{proof}  
	Let \( S_0 \) be the set containing the empty string and let \( l(0)=\lh{\treemap{\beta}{}(\estr)} \).
	\begin{align*}
		S_{n+1} &= \set{\sigma\concat[i]}{\sigma \in S_n \land \exists({x \leq l(n)+1})\left(h(x)\geq i\right) \land \sigma\concat[i] \in T_{\beta+1}  }\\
		l(n+1) &= \max_{\sigma \in S_{n+1}} \lh{\treemap{\beta}{}(\sigma)}
	\end{align*}
	Note that by definition \ref{def:largness-preserving} in defining \( S_{n+1} \)we've only excluded values of \( i \) that guarantee \( h \nfung \treemap{\beta}{}(\sigma\concat[i]) \).
\end{proof}

\begin{lemma}\label{lem:singleton-or-jump}
Suppose \( \nusm \Tleq \zeron{\alpha+1} \) is a path through an \( \alpha \) uniquely small reduct  \( T_0 \) then \( \nusm \) is a self-modulus.
\end{lemma}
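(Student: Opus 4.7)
Fix $f \fung \nusm$; the goal is to show $\nusm \Tleq f$. Consider the $f$-computable, finitely branching tree
\[
T^f = \set{\sigma \in T_0}{\sigma \funl f},
\]
which contains $\nusm$ as a path since $f \fung \nusm$ and $T_0$ is computable. If $\nusm$ turns out to be the unique element of $[T^f]$, then K\"onig's lemma applied to the $f$-computable tree $T^f$ gives $\nusm \Tleq f$ and we are done.

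To force this uniqueness, suppose toward a contradiction that some $g \in [T^f]$ satisfies $g \neq \nusm$. By lemma \ref{lem:tower-homeomorphism}, lift to distinct paths $g_\alpha, \nusm_\alpha \in [T_\alpha]$ with $\treemap{\alpha}{0}(g_\alpha) = g$ and $\treemap{\alpha}{0}(\nusm_\alpha) = \nusm$. By lemma \ref{lem:treemap-largeness-preserving} the map $\treemap{\alpha}{0}$ is largeness preserving, and unwinding definition \ref{def:largness-preserving} inductively along each lift bounds $g_\alpha(k)$ by $f(y)$ for every $y$ in the interval $[\lh{\treemap{\alpha}{0}(g_\alpha\restr{k})}, \lh{\treemap{\alpha}{0}(g_\alpha\restr{k+1})})$, and similarly for $\nusm_\alpha$. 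Since $T_\alpha$ is uniquely $\alpha$-small and $g_\alpha \neq \nusm_\alpha$, at the first coordinate $k^{*}$ of disagreement at least one of $g_\alpha(k^{*}), \nusm_\alpha(k^{*})$ must exceed $\fastn{\alpha}(k^{*})$; combined with the previous bound, $f$ strictly exceeds $\fastn{\alpha}(k^{*})$ at a position locatable in $\zeron{\alpha}$ by lemma \ref{lem:all-preimages-below}.

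The main obstacle, and the step I expect to consume most of the argument, is converting this isolated pointwise excess of $f$ over $\fastn{\alpha}$ into sufficient domination to actually compute $\nusm$. The strategy is to observe that the preceding reasoning applies to \emph{every} alternative path $g \neq \nusm$ in $[T^f]$, so that if $[T^f]$ were not a singleton then varying $g$ over the branching nodes of the $f$-computable tree $T^f$ would yield enough positions at which $f > \fastn{\alpha}$ to invoke the uniform self-modulus property of $\fastn{\alpha}$ from lemma \ref{lem:fastn} and conclude $\zeron{\alpha} \Tleq f$. With $\zeron{\alpha}$ in hand, the hypothesis $\nusm \Tleq \zeron{\alpha+1}$ together with the majorization $f \fung \nusm$ would then let us recover $\nusm$ by a bounded search through the finitely branching $f$-computable tree $T^f$. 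Coordinating the singleton and non-singleton cases so that $f$ computes $\nusm$ in both regimes is the delicate heart of the proof.
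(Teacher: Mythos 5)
The case split and the use of K\"{o}nig's lemma in the singleton case match the paper, but there is a genuine gap in the non-singleton case, and the strategy you flag as ``the delicate heart of the proof'' is both unnecessary and would not close.

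First, you only extract from unique $\alpha$-smallness that one of $g_\alpha(k^{*}), \nusm_\alpha(k^{*})$ exceeds $\fastn{\alpha}$ at the \emph{first} coordinate of disagreement $k^{*}$, and then try to manufacture more such positions by varying $g$ over other branching nodes of $T^f$. But definition \ref{def:uniquely-small} gives much more than a single point: once $g_\alpha\restr{k^{*}+1} \neq \nusm_\alpha\restr{k^{*}+1}$, the initial segments of length $x+1$ disagree for \emph{every} $x \geq k^{*}$, so for every such $x$ at least one of $g_\alpha(x), \nusm_\alpha(x)$ must exceed $\fastn{\alpha}(x)$. A single alternative path $g$ therefore already yields domination (not just a pointwise excess), and the ``varying $g$'' device — which has no reason to produce excesses at enough, or the right, positions — is never needed. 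This is the observation the paper's proof is built on.

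Second, you lift directly to level $\alpha$ and invoke lemma \ref{lem:all-preimages-below} to locate the excess ``in $\zeron{\alpha}$.'' But that lemma needs the oracle $\zeron{\beta} \Tplus h$ to compute the bound $l$ on $\lh{\treemap{\beta}{}(\sigma)}$, and $h \Tgeq \zeron{\alpha}$ is precisely what is being proved; as written the argument is circular. The paper instead runs an effective transfinite recursion up the tower: assuming $h \Tgeq \zeron{\beta}$, it uses lemmas \ref{lem:treemap-largeness-preserving} and \ref{lem:all-preimages-below} at level $\beta$ together with the every-$x$ version of unique $\beta$-smallness to compute from $h$ a function majorizing $\fastn{\beta+1}$, hence $h \Tgeq \zeron{\beta+1}$, with limit stages immediate. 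After reaching $\alpha+1$ you then get $\nusm \Tleq \zeron{\alpha+1} \Tleq h$ directly; the bounded search through $T^f$ you propose at the end is fine but superfluous. Repairing the proposal amounts to replacing the ``vary $g$'' step by the every-$x$ consequence of unique smallness and reorganizing the computation as induction on $\beta \kleeneleq \alpha+1$.
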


\begin{proof}
Let \( h \fung \nusm \) which without loss of generality we may assume is non-decreasing.  First suppose that \( \nusm \) is the only path through \( T_0 \) satisfying \( h \fung \nusm \).  In this case let \( \hat{T} \) be the set of \( \sigma \in T_0 \) with \( h \fung \sigma \).  Clearly \( \hat{T} \) is a tree and \( \nusm  \) is the unique path through \(  \hat{T} \).  As \( \hat{T} \) is a finitely branching tree computable in \( h \), K\"{o}enig's lemma lets us uniformly compute  \( \nusm \) from \( h \).  So suppose \( \nusm,g \in T_0 \) with \( h \fung \nusm,g \) and \( \nusm(y) \neq g(y) \).    We argue by effective transfinite recursion that \( h \Tgeq \zeron{\beta} \) (uniformly in \( \beta \)) for \( \beta \kleeneleq \alpha + 1 \) leaving the routine details for the reader.  At limit stages the induction is straightforward so suppose \( h \Tgeq \zeron{\beta} \).

	We show that given \( x > y\)  \( h \) can (uniformly) compute  \( f(x) \geq \fastn{\beta+1}(x) \) and thus \( h \Tgeq f \Tgeq \zeron{\beta+1} \).    By way of lemmas \ref{lem:treemap-largeness-preserving} and \ref{lem:all-preimages-below} we compute \( l \) such that if \( \lh{\sigma} \leq x+1 \) and \( h \fung \treemap{\beta}{0}(\sigma) \) then \( \lh{\treemap{\beta}{0}(\sigma)} < l \).   We now verify \( f(x) \geq \fastn{\beta+1}(x) \).  

	Since \( g_\beta, \nusm_\beta \in [T_{\beta}]  \) and by monotonicity \( g_\beta\restr{x+1} \neq \nusm_\beta\restr{x+1} \) either \( g_\beta(x) \geq \fastn{\beta+1}(x) \) or \( \nusm_\beta(x)  \geq \fastn{\beta+1}(x) \).  But \( \lh{\treemap{\beta}{}(g_\beta\restr{x+1})} < l \) and \( \lh{\treemap{\beta}{}(\nusm_\beta\restr{x+1})} < l \) so as \( \treemap{\beta}{} \) is largeness preserving and \( h \fung g, \nusm\) is monotonic we can define \( f(x)=h(l) \).    
	This completes the proof that \( \nusm \) is a self-modulus.  
\end{proof}

We fill in the final piece of the puzzle by embellishing our construction from lemma \ref{lem:tree-minus-one} so that the resulting downwardly generic tower is a uniquely small tower.

\begin{lemma}\label{lem:tree-minus-one-small}
The statement of lemma \ref{lem:tree-minus-one} still holds if we also demand that \( \theta^{\beta+1} \) is largeness preserving and \( T_{\beta} \) is uniquely \( \beta \) small.
\end{lemma}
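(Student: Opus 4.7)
The plan is to refine the extension rule in the construction of lemma \ref{lem:tree-minus-one} so that every newly introduced value is simultaneously large enough to secure largeness preservation and big enough to exceed $\fastn{\beta}$ at its own position. Since the construction runs with $\zeron{\beta}$ as oracle and $\fastn{\beta}$ is $\zeron{\beta}$-computable by lemma \ref{lem:fastn}, we may freely query $\fastn{\beta}(p)$ while choosing extension values. Concretely, whenever the original construction would set $\theta_{s+1}(\sigma\concat[k]) = \theta_s(\sigma)\concat[\pair{k}{s}]$ at position $L = \lh{\theta_s(\sigma)}$, we instead append $v = \max(k,\ s,\ m,\ \fastn{\beta}(L) + 1)$, where $m$ is the largest entry of $\theta_s(\sigma)$ (taken to be $0$ if that string is empty). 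The $\Sigma^0_1$-diagonalization reset step is left unchanged: it picks an existing $\tau \in T_s \isect \REset({\zeron{\beta}})[s]{i}$ extending $\theta_s(\sigma)$, and every such $\tau$ was itself introduced via the revised rule at an earlier stage, so its values beyond $\lh{\theta_s(\sigma)}$ already satisfy the desired bounds.

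Largeness preservation (definition \ref{def:largness-preserving}) is immediate from the clauses $v \geq k$ and $v \geq m$: these guarantee that each image $\theta^{\beta+1}(\sigma\concat[k])$ is non-decreasing (assuming the initial identity segment $T_\beta\restr{\copylen{\beta}}$ consists of non-decreasing strings, which propagates inductively from a non-decreasing choice of $T_\alpha$) and that new entries are at least $k$. For unique $\beta$-smallness (definition \ref{def:uniquely-small}), fix distinct $f, g \in [T_\beta]$, lift them to $f^+, g^+ \in [T_{\beta+1}]$ under $\theta^{\beta+1}$, let $y_0$ be the first coordinate at which $f^+, g^+$ disagree, and set $L = \lh{\theta^{\beta+1}(f^+\restr{y_0})}$. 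Then $f\restr{L} = g\restr{L}$, so for $x < L$ the conclusion of the implication in the definition of uniquely $\beta$-small holds automatically. For $x \geq \max(L, \copylen{\beta})$, the entry $f(x)$ was introduced by the revised rule and therefore exceeds $\fastn{\beta}(x)$; likewise $g(x) > \fastn{\beta}(x)$, so the hypothesis fails vacuously. The remaining range $L \leq x < \copylen{\beta}$ — relevant only when $y_0 < \copylen{\beta}$ — lies inside $T_\beta\restr{\copylen{\beta}} = T_\gamma\restr{\copylen{\beta}}$; lifting $f\restr{\copylen{\beta}}, g\restr{\copylen{\beta}}$ to infinite paths $\hat f, \hat g \in [T_\gamma]$ and invoking the inductive unique $\gamma$-smallness of $T_\gamma$ yields $\max(\hat f(x), \hat g(x)) > \fastn{\gamma}(x) \geq \fastn{\beta}(x)$ at every differing coordinate, since $\gamma = \enumpred{\beta} \kleeneg \beta$ implies $\fastn{\gamma} \fung \fastn{\beta}$.

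I expect the main obstacle to lie in two places. First, one must verify that the diagonalization reset never silently violates either new condition: since redefinitions pull values from $T_s$ rather than generating them fresh, one needs to track that every node ever placed into $T_s$ beyond the initial identity segment was originally introduced by an application of the revised rule, so its entries at each position automatically inherit both the non-decreasing and the $\fastn{\beta}$-exceeding properties. Second, lifting initial segments of $f, g$ to infinite paths of $T_\gamma$ requires those nodes to be extendable in $T_\gamma$; this is secured either by working with $\pruneTree{T_\gamma}$ in place of $T_\gamma$, or equivalently by observing from the construction that the nodes placed into $T_\gamma\restr{\copylen{\beta}}$ via $\theta^{\gamma+1}$ are automatically extendable — a routine refinement of the analysis already present in lemma \ref{lem:tree-minus-one}.
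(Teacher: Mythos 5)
Your plan diverges from the paper's in a way that opens a real gap, and the gap traces back to which function the smallness separation must respect.  You compare new entries against \( \fastn{\beta}(L) \), which is indeed \( \zeron{\beta} \)-computable by lemma \ref{lem:fastn}, so a ``bake the bound in at extension time'' rule is well-defined.  But the paper's own proof compares the settled values \( \theta_s^{\beta+1}(\sigma)(x) \) and \( \theta_s^{\beta+1}(\tau)(x) \) against \( \fastn[s]{\beta+1}(x) \), the \( \zeron{\beta} \)-computable stagewise approximation of \( \fastn{\beta+1} \), and it is \( \fastn{\beta+1} \), not \( \fastn{\beta} \), that the downstream application requires: in lemma \ref{lem:singleton-or-jump} the argument crucially concludes from \( g_\beta\restr{x+1}\neq \nusm_\beta\restr{x+1} \) that one of \( g_\beta(x),\nusm_\beta(x) \) exceeds \( \fastn{\beta+1}(x) \), precisely so that \( h\Tgeq\zeron{\beta} \) can be promoted to \( h\Tgeq\zeron{\beta+1} \).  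Since \( \fastn{\beta+1} \) is only \( \zeron{\beta} \)-approximable from below (lemma \ref{lem:fastn}, part \ref{lem:fastn:increasing}) and not \( \zeron{\beta} \)-computable, no extension rule available to the construction can promise at assignment time that a new entry exceeds the \emph{true} limit value \( \fastn{\beta+1}(L) \).  That is exactly why the paper runs a finite-injury pruning argument instead: it monitors pairs of images that are simultaneously below the current approximation \( \fastn[s]{\beta+1} \), discards the one of lower priority (priority taken in the \emph{domain}, so the pruning cannot starve the genericity requirements), and verifies settling.  Your approach sidesteps the finite injury entirely, which means it proves a version of the lemma too weak for the rest of the paper.

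Two secondary issues, both fixable but worth naming.  Your formula \( v = \max(k, s, m, \fastn{\beta}(L)+1) \) can assign the same value to two siblings \( \sigma\concat[k_1] \) and \( \sigma\concat[k_2] \) defined at the same stage with the same parent image, which would break monotonicity of \( \theta^{\beta+1} \); using something like \( \pair{k}{v} \) in place of \( v \) restores injectivity.  And your claim that the diagonalization reset ``already satisfies the desired bounds'' conflates the smallness bound (which does propagate, as you say) with the \( \geq k \) clause of largeness preservation: the target \( \tau \) adopted as the new \( \theta_{s+1}(\sigma) \) may contain entries that were introduced as images of other nodes \( \sigma''\concat[k''] \) with \( k'' < k \) where \( \sigma = \rho\concat[k] \), so those entries need not exceed \( k \).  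The paper dismisses this as trivial by restricting the eligible reset targets to largeness-preserving ones; that restriction should appear explicitly in your argument too.
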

\begin{proof}
We sketch the modifications the proof of lemma \ref{lem:tree-minus-one} requires.  Since it is trivial to ensure \( \theta^{\beta+1} \) is largeness preserving simply by restricting which nodes we consider as values for \( \theta_s^{\beta+1} \) we restrict our attention to ensuring that \( T_\beta \) is uniquely \( \beta \) small.  

It is easy to eventually recognize pairs of nodes \( \sigma, \tau \) with \( \lh{\sigma}=\lh{\tau}=x+1 \) in  \( \rng \theta_s^{\beta+1} \) such that \( \sigma(x), \tau(x) < \fastn[s]{\beta+1}(x) \) and to abandon (remove from \( \rng \theta_{s+1}^{\beta+1} \)) one or the other rendering it a terminal branch.  Provided we always cut off one member of any such pair \( T_\beta \) will surely be uniquely \( \beta \) small.  The difficulty lies only in ensuring we choose the correct nodes to cut so as not to collaborate with our attempts to make \( T_\beta \) eagerly generic in a way that prunes all infinite paths from \( T_\beta \). 

Our solution is to work in the domain rather than the image and regard \( \theta_s^{\beta+1}(\sigma) \) to have priority \( \gcode{\sigma} \).  Note that we assume that \( \sigma \subset \tau \)  implies that \( \gcode{\sigma} \leq \gcode{\tau} \).   If at the end of stage \( s \) we discover some minimal pair of strings \( \sigma, \tau \) and \( x \) with \( \theta_s^{\beta+1}(\sigma)(x), \theta_s^{\beta+1}(\tau)(x) < \fastn[s]{\beta+1}(x) \) where \( \sigma \neq \tau  \) with \( \gcode{\tau} < \gcode{\sigma} \) we set \( \theta_s^{\beta+1}(\sigma) \) to be undefined.  Note that by working in the domain if we act to meet some genericity requirement by forcing all extensions of \( \theta_s^{\beta+1}(\sigma) \) to pass through \( \tau \) the only way \( \tau \) could later be pruned from the tree is if \( \theta_s^{\beta+1}(\sigma) \) is pruned from the tree so our additional pruning can't stop us from making \( T_\beta \) eagerly generic.   We now argue that if \( \sigma \in T_{\beta+1} \) then eventually \( \theta_s^{\beta+1}(\sigma) \) settles down to a node that never gets pruned.
	
Assume that \( \sigma \) is the node on \( T_{\beta+1} \) with least code at which the claim fails, \( s \) is a stage large enough that for every \( \sigma' \) with with \( \gcode{\sigma'} < \gcode{\sigma} \) \( \theta_s^{\beta+1}(\sigma') \) has settled on it's final value, \( \sigma \) never leaves \( T_{\beta+1} \) after stage \( s \) and that \( l > \lh{\theta_s^{\beta+1}(\sigma')} \) for all such \( \sigma' \).   Let \( s' > s \) be larger than \( \fastn{\beta+1}(x) \) for all \( x \leq l \) and \( s'' > s' \) the first stage following \( s' \) at which  \( \theta_{s''}^{\beta+1}(\sigma) \) is reset.  If \( t > s'' \) then \( \theta_{s''}^{\beta+1}(\sigma) \fung \fastn{\beta+1}\restr{l} \) so after \( s'' \) \( \theta_{t}^{\beta+1}(\sigma) \) is never again reset on account of avoiding simultaneous smallness and \( \sigma \) never leaves \( T_{\beta+1} \) after \( s \) so \( \theta_{t}^{\beta+1}(\sigma) \) isn't reset after \( s'' \) on account of the homeomorphism requirements so it must be reset at some \( s'''> s'' \) on account of the genericity requirement.  But now nothing can reset \( \theta_{t}^{\beta+1}(\sigma) \) if \( t > s''' \).  Contradiction.
	
\end{proof}

This enough to complete our proof of theorem \ref{thm:nonu}.  Invoke lemma \ref{lem:tree-minus-one-small} to produce a a uniquely small tower \(  \seq{T_\beta}{\beta \kleeneleq \alpha}  \) with \( T_\alpha \Tleq \zeron{\alpha} \) homeomorphic to \( \wstrs \) with \( T_\alpha=T^{*} \) and set \( T=T_0, g=\treemap{\alpha}{0}(g^{*}) \).  Lemma \ref{lem:totally-generic} tells us that \( g \) is \( \alpha \)-generic on \( T \) and as \( g \) is non-isolated lemma \ref{lem:generic-not-unif-sm} guarantees that no \( h \ttleq \jumpn{g}{\beta}, \beta \kleenel \alpha \) is a uniform modulus for \( g \).  On the other hand as both \( g^{*} \) and \( \treemap{\alpha}{0} \) are computable in \( \zeron{\alpha} \) so is \( g \) and by lemma \ref{lem:singleton-or-jump} this entails that \( g \) is a self-modulus.

\subsection{Functions Lacking Simple Moduli}

Since every \( \deltaOneOne \) function has a moduli one might also be prompted to ask whether there are \( \deltaOneOne \) degrees that are far from any modulus.  Intuitively this should be true for sufficiently generic degrees and we verify this is the case.

\begin{theorem}\label{thm:distant-modulus}
	If \( g \) is a non-isolated path through \( T \) and \( g \) is \( \alpha \) generic on \( T \) \textit{relative} to \( \pruneTree{T} \) then no \( h \ttleq \jumpn{(g \Tplus \pruneTree{T})}{\beta} \) for some \( \beta \kleenel \alpha \) is a modulus for \( g \) so in particular no \( h \ttleq \jumpn{g}{\beta}  \) is a modulus for \( g \).
\end{theorem}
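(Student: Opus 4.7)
My plan is to reduce the theorem to Lemma \ref{lem:generic-not-unif-sm} by invoking the Slaman--Groszek Hechler-forcing construction sketched in Lemma \ref{lem:mod-imp-uniform} to convert a non-uniform modulus into a uniform one.

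Assume for contradiction that $h \ttleq \jumpn{(g \Tplus \pruneTree{T})}{\beta}$ is a modulus for $g$ for some $\beta \kleenel \alpha$, with $h = \Psi(\jumpn{(g \Tplus \pruneTree{T})}{\beta})$ for a truth-table functional $\Psi$. I would then introduce Hechler conditions $(\sigma, q)$ with $\sigma \in \wstrs$, $q \in \baire$, $q \fung h$, and $\sigma \fung q\restr{\lh{\sigma}}$, where $(\tau, r) \leq (\sigma, q)$ when $\tau \supseteq \sigma$, $r \fung q$, and $\tau \fung q\restr{\lh{\tau}}$. For each $e \in \omega$ let $D_e$ be the open set of conditions $(\sigma, q)$ admitting some $\tau \supseteq \sigma$ with $\tau \fung q\restr{\lh{\tau}}$ and $\Phi_e(\tau; x)\conv \neq g(x)$ for some $x$. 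The key initial observation is that if every $D_e$ were dense below $(\estr, h)$, a diagonal construction would produce $f \fung h$ with $\Phi_e(f) \neq g$ for every $e$, contradicting that $h$ is a modulus for $g$.

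Hence some $D_e$ is not dense below $(\estr, h)$; pick a witness condition $(\sigma_0, q_0)$ below which no extension enters $D_e$. Then every $\tau \supseteq \sigma_0$ majorizing $q_0\restr{\lh{\tau}}$ has $\Phi_e(\tau; x)$ agreeing with $g(x)$ wherever it converges. Following Slaman--Groszek, I would extract from $(\sigma_0, q_0, e)$ a uniform modulus $\hat h$ for $g$: set $\hat h$ to agree with $q_0$ past $\lh{\sigma_0}$ and to encode $\sigma_0$ on its initial segment so that every $f \fung \hat h$ can be computably rerouted to some $f' \supseteq \sigma_0$ with $f' \fung q_0$, yielding a fixed functional $\hat\Phi$ with $\hat\Phi(f) = \Phi_e(f') = g$. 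Locating $(\sigma_0, q_0)$ is a $\Cpin[g \Tplus \pruneTree{T}]{\beta+1}$ task (checking non-density uses $g$ as an oracle and references to $q$-values invoke $\Psi(\jumpn{(g \Tplus \pruneTree{T})}{\beta})$), so $\hat h \ttleq \jumpn{(g \Tplus \pruneTree{T})}{\beta + 1}$. When $\beta + 1 \kleenel \alpha$, the version of Lemma \ref{lem:generic-not-unif-sm} relativized to $\pruneTree{T}$, applied using that $g$ is $\alpha$-generic on $T$ relative to $\pruneTree{T}$ and non-isolated, contradicts the existence of this uniform modulus.

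The main obstacle is twofold. First, the Slaman--Groszek extraction must produce $\hat h$ such that every $f \fung \hat h$ (not merely those literally extending $\sigma_0$) computes $g$ via the fixed functional $\hat\Phi$; the standard remedy pads $\hat h$ and uses a computable rearrangement of $f$'s initial segment, but one must verify this stays within truth-table reducibility. Second, the argument as sketched requires $\beta + 1 \kleenel \alpha$, which fails precisely when $\alpha$ is the successor $\beta + 1$; that edge case requires either running the Hechler forcing inside the local forcing on $T$ so the extracted $\hat h$ remains at level $\beta$ itself, or arguing the successor case indirectly from the limit cases. Otherwise the reduction mirrors the pattern of Lemma \ref{lem:generic-not-unif-sm} and should go through routinely.
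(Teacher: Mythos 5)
Your proposal shares the paper's Hechler-forcing skeleton, but the two arguments diverge precisely where the real work happens, and the divergence opens two genuine gaps that you flag but do not close.

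The first gap is the successor edge case, which you acknowledge.  Your plan extracts $\hat h$ at level $\beta+1$ and then invokes a relativized Lemma~\ref{lem:generic-not-unif-sm}, which needs $\beta+1 \kleenel \alpha$, i.e.\ at least $\beta+2$ genericity.  The hypothesis of Theorem~\ref{thm:distant-modulus} only gives $\alpha$ genericity, and the conclusion must hold for \emph{every} $\beta \kleenel \alpha$, including $\beta$ the predecessor of a successor $\alpha$.  So your reduction fails exactly for $\alpha$ a successor notation, which includes all the finite cases $\alpha = 1, 2, 3, \ldots$.  Your suggested fix --- running the Hechler forcing inside the local forcing on $T$ to keep the level at $\beta$ --- is in essence what the paper does, but once you restrict the Hechler second coordinates $q$ to be $\ttleq \jumpn{(g \Tplus \pruneTree{T})}{\beta}$, the failure of both alternatives no longer yields a genuine uniform modulus: it only tells you that $\Phi_e(\tau)$ agrees with $g$ and converges for $\tau$ majorizing level-$\beta$ functions, not for arbitrary $f \fung \hat h$.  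That weaker conclusion is not enough to feed into Lemma~\ref{lem:generic-not-unif-sm}, which requires $\Phi(f)=g$ for every $f \fung \hat h$.

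The second gap is the claimed complexity bound on $\hat h$, even in the limit case.  You argue $\hat h \ttleq \jumpn{(g \Tplus \pruneTree{T})}{\beta+1}$ because ``locating $(\sigma_0, q_0)$ is $\Cpin[g \Tplus \pruneTree{T}]{\beta+1}$,'' but the complexity of $\hat h$ as a function is governed by the complexity of the condition $q_0$ itself, not by the complexity of the search.  With unconstrained Hechler conditions, $q_0 \in \baire$ is arbitrary, so no bound on $\hat h$ follows; with conditions constrained to level $\beta$ you hit the problem above.  The paper sidesteps both issues by never attempting to extract a uniform modulus: the conditions $h^g_n$ are kept $\ttleq \jumpn{(g \Tplus \pruneTree{T})}{\beta}$ throughout, and when both alternatives fail at some stage $n$ the proof passes directly to a genericity argument, enumerating nearby $\beta$-generic paths $g_i$ through $T$, forming the auxiliary function $\hat h^g$ that simultaneously dominates $h^g$ and all $h^{g_i}$, and then using the $\beta+1$ genericity of $g$ relative to $\pruneTree{T}$ to force a $\lnot\phi$ statement that one of the $g_l$'s visibly violates.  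That direct route works with exactly one jump of genericity headroom, which is all the hypothesis provides, and is the piece your reduction replaces with an appeal it cannot fully pay for.
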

\begin{proof}
Let \( g \in [T] \) be \( \beta+1 \kleeneleq \alpha \) generic on \( T \) relative to \( \pruneTree{T} \) and let \( \Psi \) be a truth table functional, i.e. a Turing functional total on all inputs.  Suppose for a contradiction that \( m^g=\Psi(\jumpn{(g \Tplus \pruneTree{T})}{\beta}) \), is a modulus for \( g \).  We work to build some \( f \fung m^{g} \) not computing \( g \).  Let \( \sigma_0=\estr \) and \( h_0=m \).  At stage \( n+1 \) we define \( \sigma_n \supseteq \sigma_0 \) such that \( \sigma_n \fung h^{g}_{n+1} \) and \( h^{g}_{n+1} \ttleq \jumpn{(g \Tplus \pruneTree{T})}{\beta} \) with \( h^{g}_{n+1} \fung h^{g}_{n}  \).  Thus at each stage we commit to some initial segment of \( f \) and a function that \( f \) must majorize.  Our goal will be to force every Turing reduction from \( f \) either to disagree with \( g \) or to be partial.

If there is a \( \sigma \supseteq \sigma_n \) with \( \sigma \fung h^{g}_n \) forcing a disagreement between \( \recfnl{n}{\sigma}{} \) and \( g \) then let \( \sigma_{n+1}=\sigma \) and \( h^{g}_{n+1}=h^{g}_n \).  Otherwise if there is some \( h^{g} \ttleq \jumpn{(g \Tplus \pruneTree{T})}{\beta} \) with \( h^g \fung h^{g}_{n}  \) and an integer \( x \) such that no \( \sigma \supseteq \sigma_n \) with \( \sigma \fung h \) satisfies \( \recfnl{n}{\sigma}{x}\conv \) then leave \( \sigma_{n+1}=\sigma_n \) and set \( h_{n+1}=h \).  If one of these two alternatives is satisfied for every \( n \) then \( f=\Union_{n \in \omega} \sigma_n \) majorizes \( m^{g} \) but fails to compute \( g \) contradicting the assumption.  So suppose that for \( \sigma=\sigma_n \) and \( h=h_n \) neither alternative is satisfied.

We note that since \( g \) is non-isolated, computably in \( \jumpn{(g \Tplus \pruneTree{T})}{\beta}  \) one can enumerate an infinite list of distinct \( \beta \) generic branches  \( g_i \) of \( T \)  with \( g_i\restr{i}=g\restr{i} \) and \( h^{g_i}\restr{\lh{\sigma}}=h^{g}\restr{\lh{\sigma}} \) with the later property guaranteed simply by letting \( g_i \) equal \( g \) on a long enough initial segment.  Therefore we define \( \hat{h}^{g} \Tleq \jumpn{(g \Tplus \pruneTree{T})}{\beta} \) so that \( \hat{h}^{g}(x) \) searches for \( l \) and \( y \) with \( g\restr{l} \frc[T] h^{g}(x)=y \) and then searches for values \( y_i \) for each \( i \leq l \) such that \( g_i \frc[T] h^{g_i}(x)=y_i  \) and returns a number larger than \( y \) and all the \( y_i \).  Such values \( y_i \) must exist since \( h^{g_i} \) is total.  Thus \( \hat{h}^{g} \fung h^{g} \), \( \hat{h}^{g} \fung h^{g_i} \) and \( \sigma \fung \hat{h}^{g} \).  

Since \( g \) is \( \beta+1 \) generic relative to \( T \) on \( T \) there is some \( l \) such that \( g\restr{l} \) forces the \( \lnot \phi \) where \( \phi \) is the \( \Csigman[T]{\beta} \) property that some extension  \( \sigma' \supseteq \sigma \) with \( \sigma' \fung h^{g} \) disagrees with \( g \).  Fix \( x \) so that \( g(x) \neq g_l(x) \) and pick \( \tau \fung \hat{h}^{g} \) extending \( \sigma \) such that \( \recfnl{n}{\tau}{x}\conv \).  Such a \( \tau \) must exist as otherwise \( \hat{h}\) would have been a valid choice for \( h_{n+1} \).  Moreover \( \recfnl{n}{\tau}{x}=g(x)\) since \( g \) forced \( \lnot\phi \) and \( \tau \fung \hat{h}^{g} \fung h^{g}  \).  But as \( \hat{h}^{g} \fung h^{g_l} \) and \( g(x)\neq g_l(x) \) we have \( g_l \models \phi \) and as \( g_l \) is \( \beta \) generic on \( T \)  some \( \upsilon \in \pruneTree{T}\) with \( \upsilon  \supseteq g\restr{l} \) forces \( \phi \) contradicting the fact that \( g\restr{l} \) forced \( \lnot\phi \).
\end{proof}

The similarity between this result and lemma \ref{lem:generic-not-unif-sm} is striking.  Since lemma \ref{lem:generic-not-unif-sm} was in some sense a reflection of the fact that \( g \) has a uniform modulus truth table computable within \( \beta \) jumps if and only if \( g \) is a \( \pizn{\beta+1} \) singleton this naturally raises the following open question.

\begin{question}
Can the self-moduli be characterized in terms of definability like the characterization of the uniform self-moduli as the \( \pizn{1} \) singletons?
\end{question}

In particular we conjecture that \( g \) is a self-modulus if and only if there is a  computable tree \( T \) with \( g \in [T] \) and a \( \pizn{1} \) formula \( \phi({\pruneTree{T}}) \) such that for all \( \hat{T} \subseteq T \) with \( g \in [\hat{T}] \) the formula \( \phi({\pruneTree{\hat{T}}}) \) is uniquely satisfied by \( g \).  Ideally, however, there would be a simpler statement expressing the definability of the self-moduli.


\appendix

\section{Defining \( \copylen{\beta} \)}\label{sec:ord-org}

Here we make good on our promise to define \( \copylen{\beta} \) and \( \enumpred{\beta} \).  

\begin{definition}\label{def:notation-path}
A path from a limit notation  \( \lambda \) to  \( \beta \kleenel \lambda  \) to is a sequence \( \vec{\gamma}=\left(\gamma_0,\gamma_1,\ldots,\gamma_{n-1},\gamma_n \right) \) of notations such that \( \gamma_0 = \lambda, \gamma_n = \beta  \) and for every \( i \)  \( \gamma_{i+1} \) appears in the effective limit for \( \gamma_i \), i.e., 
	\[  \forall[i < n]\exists({m})\left( \gamma_{i+1} = \kleenelim{\gamma_{i}}{m}\right)  \]
A path \( \vec{\gamma} \) from \( \lambda \) to \( \beta \) is minimal if \( \gamma_{i+1}=\kleenelim{\gamma_i}{m} \) for the least \( m \) such that \( \kleenelim{\gamma_{i+1}}{m} \kleenegeq \beta \).  
\end{definition}

Given an initial segment \( I \) of ordinal notations for every \( \beta \in I \) let  \( \enumpred{\beta} \) denote the least limit notation \( \lambda \kleeneg \beta \) in \( I \) with \( \beta = \kleenelim{\lambda}{m} \) for some \( m \).  If no such notation exists we write \( \enumpred{\beta}=\diverge \)

\begin{definition}\label{def:nice-ordinal}
A set \( I \) of ordinal notations is nice if \( I \) is a linearly ordered initial segment of \( \kleeneO \) such that for any \( \beta \kleenel \lambda  \in I \):
		\begin{arabiclist}
			\item There is at most one path denoted \( \enumseq[\lambda]{\beta} \) from \( \lambda \) to \( \beta \) and that path is minimal. \label{def:nice-ordinal:min}
			\item \( \enumseq[\lambda]{\beta} \) is defined whenever \( \kleenelim{\lambda}{0} \kleeneleq \beta \kleenel \lambda \).\label{def:nice-ordinal:between}
			\item There is no infinite  sequence \( \seq{\kappa_i}{i \in \omega} \) in \( I \) such that for all \( i \) \( \enumseq[\kappa_{i+1}]{\kappa_i} \) is defined.\label{def:nice-ordinal:finite-sequence}
			
		\end{arabiclist}
We say an ordinal \( \alpha \) is nice if the set \( \set{\beta}{ \beta \kleeneleq \alpha} \) is nice.
\end{definition}

	\begin{lemma}\label{lem:build-copylen}
If \( I \) is a nice set of notations then there is a total function \( \copylen{\beta} \) on \( I \) computable on every bounded initial segment \( I\restr{\alpha} \) satisfying:
		\begin{subequations}\label{eq:copylen-reqs}
		\begin{align}
				& \copylen{\beta} = \begin{cases}
				 												0 & \text{if } \enumpred{\beta}=\diverge\\
																\copylen{\enumpred{\beta}}+n & \text{if }  \beta=\kleenelim{\enumpred{\beta}}{n} 
														\end{cases}\\
				&  \kleenelim{\lambda}{n} \kleenel \beta \kleeneleq \lambda \implies \copylen{\kleenelim{\lambda}{n}} < \copylen{\beta}  \label{lem:build-copylen:inbetween}\\
			 & \lim_{n \to \infty} \copylen{\kleenelim{\lambda}{n}} = \infty \label{lem:build-copylen:infty} 
		\end{align}
	\end{subequations}
		Moreover \( \enumpred{\beta} \) is also computable on any initial segment \( I\restr{\kleeneleq \alpha} \).
		
	\end{lemma}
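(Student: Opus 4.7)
The approach is to define $\copylen{\beta}$ by a finite descending recursion along the canonical chain of $\enumpred$-predecessors and verify the monotonicity and limit conditions by exploiting the uniqueness of paths that niceness grants.

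First I would, for each $\beta \in I$, form the chain $\kappa_0 = \beta$, $\kappa_{i+1} = \enumpred{\kappa_i}$, continued as long as $\enumpred{\kappa_i}$ is defined. Each consecutive pair $(\kappa_{i+1}, \kappa_i)$ is witnessed by the trivial length-two path $\enumseq[\kappa_{i+1}]{\kappa_i}$, so niceness condition (\ref{def:nice-ordinal:finite-sequence}) forces this chain to terminate at some $\kappa_t$ with $\enumpred{\kappa_t} = \diverge$. I then set $\copylen{\kappa_t} = 0$ and back-propagate by $\copylen{\kappa_i} = \copylen{\kappa_{i+1}} + n_{i+1}$, where $\kappa_i = \kleenelim{\kappa_{i+1}}{n_{i+1}}$; this directly realises the recursive equation in the statement.

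Next, to establish the monotonicity and limit conditions, the key observation is that uniqueness of paths (condition \ref{def:nice-ordinal:min}) forces the canonical chain upward from any $\beta$ to trace exactly the reversal of the unique minimal downward path whenever one exists. Applied to the trivial two-step path $\lambda, \kleenelim{\lambda}{n}$, this yields $\enumpred{\kleenelim{\lambda}{n}} = \lambda$ and hence $\copylen{\kleenelim{\lambda}{n}} = \copylen{\lambda} + n$, which immediately gives \eqref{lem:build-copylen:infty}. For \eqref{lem:build-copylen:inbetween} (read in the non-degenerate range $\beta \kleenel \lambda$, since at $\beta = \lambda$ the strict inequality as written is incompatible with the recursive definition), condition (\ref{def:nice-ordinal:between}) supplies a minimal downward path from $\lambda$ to $\beta$ whose first step lands at $\kleenelim{\lambda}{m}$ for $m$ least with $\kleenelim{\lambda}{m} \kleenegeq \beta \kleeneg \kleenelim{\lambda}{n}$, forcing $m > n$. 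Reversing this path and summing indices yields $\copylen{\beta} = \copylen{\lambda} + \sum_i n_i \geq \copylen{\lambda} + m > \copylen{\lambda} + n = \copylen{\kleenelim{\lambda}{n}}$, as required.

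For computability on every bounded initial segment $I\restr{\alpha}$ with $\alpha \in I$, note that since $I$ is an initial segment of $\kleeneO$ the predicate $\mu \in I\restr{\alpha}$ is just $\mu \kleeneleq \alpha$, computable uniformly from an index for $\alpha$. To evaluate $\enumpred{\beta}$ one searches through limit notations $\mu$ with $\beta \kleenel \mu \kleeneleq \alpha$ for the (by niceness, unique) one having $\beta$ in its effective sequence; $\copylen{\beta}$ is then produced by unwinding the finite $\enumpred$-chain and summing indices. The search terminates either by finding a witness or by exhausting a computable bounded candidate set. The main obstacle is verifying that the upward chain $\beta, \enumpred{\beta}, \enumpred^2(\beta), \ldots$ truly coincides with the reversal of every downward path into $\beta$: this rests on a careful deployment of path uniqueness at each step, showing that the least-$\lambda$ definition of $\enumpred$ cannot diverge from following the reversed path. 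Once this identification is locked in, the remaining verifications reduce to bookkeeping on path indices.
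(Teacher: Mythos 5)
Your handling of the recursive definition and the monotonicity/limit conditions is on the right track, and you correctly spot that the strict inequality in \eqref{lem:build-copylen:inbetween} cannot hold at \( \beta = \lambda \) (compare with \eqref{edef:copylen-between}, which uses \( \leq \) and \( \beta \kleenel \lambda \)). The identification of the upward \( \enumpred \)-chain from \( \beta \) with the reversal of the unique minimal path is the right lemma to want, and it is provable: if \( \beta = \kleenelim{\lambda}{n} = \kleenelim{\lambda'}{m} \) with \( \lambda' \kleenel \lambda \), then \( \kleenelim{\lambda}{0} \kleeneleq \beta \kleenel \lambda' \kleenel \lambda \) so part \ref{def:nice-ordinal:between} of definition \ref{def:nice-ordinal} gives a path from \( \lambda \) to \( \lambda' \), and composing with the length-two path \( (\lambda', \beta) \) produces a path from \( \lambda \) to \( \beta \) of length at least three, contradicting uniqueness against the direct length-two path \( (\lambda, \beta) \). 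You should state this explicitly rather than deferring it to ``careful deployment of path uniqueness.''

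The genuine gap is in the computability claim. You assert ``the search terminates either by finding a witness or by exhausting a computable bounded candidate set,'' but you never exhibit that bounded set, and the obvious candidate --- all limit notations \( \mu \) with \( \beta \kleenel \mu \kleeneleq \alpha \) --- is infinite in general, so a direct search for \( \enumpred{\beta} \) has no way to halt and output \( \diverge \). This is precisely where the paper does real work: it computes a finite decreasing sequence \( \kappa_0 = \alpha, \kappa_1, \ldots, \kappa_n = 0 \) (each \( \kappa_{i+1} \) being the predecessor of the least notation reachable from \( \kappa_i \) by a path, or simply the predecessor when \( \kappa_i \) is a successor), which partitions \( I\restr{\alpha} \) into connected pieces so that every \( \beta \) either equals some \( \kappa_i \) --- in which case \( \enumpred{\beta} = \diverge \) --- or has a path from exactly one \( \kappa_i \), whose enumeration locates \( \enumseq[\kappa_i]{\beta} \) and hence both \( \enumpred{\beta} \) and \( \copylen{\beta} \). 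Running the path-enumeration in parallel with the enumeration of the \( \kappa_i \) sequence is what makes the search terminate on every input; without this or an equivalent device your algorithm is partial on the notations at which \( \enumpred{\beta} \) diverges. You also still need the unbounded case (passing to an increasing cofinal sequence of notations with undefined \( \enumpred \)), though that reduction is routine once the bounded case is settled.
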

	\begin{proof}
We first note that \( \copylen{\beta} \) is always finite as otherwise the sequence \( \str{\beta, \enumpred{\beta}, \enumpred{\enumpred{\beta}}, \ldots } \) would violate \hyperref[def:nice-ordinal:finite-sequence]{part \ref*{def:nice-ordinal:finite-sequence} of definition \ref*{def:nice-ordinal}}.  To verify \eqref{lem:build-copylen:inbetween} note that by applying the minimality of paths in \( I \) we know that \( \enumseq[\lambda]{\beta} \) passes through \( \kleenelim{\lambda}{m} \) for some \( m > n \).   Claim \eqref{lem:build-copylen:infty} now follows trivially by considering the paths from \( \lambda \) to \( \kleenelim{\lambda}{n} \).  It remains only to show the computability.  

Now we show if \( I \) has a maximal element \( \alpha \) then \( \copylen{\beta} \) is a computable function on \( I \).  To this end we define a decreasing sequence of ordinals \( \kappa_i \) dividing \( I \) into connected pieces.  Let \( \kappa_0=\alpha \) and if \( \kappa_i \) is a limit ordinal set  \( \kappa_{i+1} \) to be the predecessor of the least \( \beta  \) with \( \enumseq[\kappa_i]{\beta} \) defined.   If \( \kappa_i \) is a successor let \( \kappa_{i+1} \) be the predecessor of \( \kappa_i \).  Since this is a decreasing sequence of ordinals it must be finite thus for some \( n \) \( \kappa_n=0 \) and by definition \ref{def:nice-ordinal} for every \( \beta \in I \) there is exactly one \( \kappa_i \) with \( \enumseq[\kappa_i]{\beta} \) defined.  We may compute \( \copylen{\beta} \) by enumerating all paths from some \( \kappa_i \) until we find some path \( \enumseq[\kappa_i]{\beta} \) at which point we may set \( \copylen{\beta}=\norm{\enumseq[\kappa_i]{\beta}} \).

If \( I \) lacks a maximal element we note there is an increasing sequence \( \alpha_i \) cofinal in \( I \) such that \( \enumpred{\alpha_i} \) isn't defined for any \( i \).  By part \hyperref[def:nice-ordinal:finite-sequence]{\ref*{def:nice-ordinal:finite-sequence} of definition \ref*{def:nice-ordinal}} we can build \( \alpha_i \) from any increasing cofinal sequence by repeatedly applying the operation taking \( \beta \) to \( \enumpred{\beta} \) until no longer possible.  Since the definition for \( \copylen{\beta} \) in \( I \) and \( I\restr{\alpha_i+1} \) agree when \( \beta \in I\restr{\alpha_i+1} \) given \( \alpha \) we can simply compute \( \copylen{\beta} \) on \( I\restr{\alpha_i+1} \supseteq I\restr{\alpha} \) for an appropriate \( i \).

To compute \( \enumpred{\beta} \) on \( I\restr{\alpha_i+1} \) we start listing \( \lambda \kleeneleq \alpha_i  \) and look for a \( \lambda \) and integer \( n \) such that \( \beta=\kleenelim{\lambda}{n} \).  If such a pair is ever found we return \( \enumpred{\beta}=\lambda \).  Simultaneously we start listing the sequence \( \kappa_i \) defined from \( \alpha=\alpha_i \) and should we discover \( \kappa_i=\beta \) we return \( \enumpred{\beta}=\diverge \).  The arguments given above guarantee that this is both a correct and complete procedure.  
	\end{proof}

We now must prove that there is a nice path through \( \kleeneO \).  We start by showing that we can computably build nice ordinal notations from arbitrary ordinal notations.

\begin{equation}
		\vec{\alpha} <_L \vec{\beta} \iffdef \vec{\alpha} \supsetneq \vec{\beta} \lor \exists({l})( \vec{\alpha}\restr{l}=\vec{\beta}\restr{l} \land \vec{\alpha}(l) \kleenel \vec{\beta}(l) )
\end{equation}

\begin{lemma}\label{lem:enum-sequence}
	Given any \( \alpha \) there is a computable procedure terminating on all \( \beta \kleeneleq \alpha \) yielding a finite sequence of notations \( \enumseq{\beta}=\str{\beta_0, \beta_1, \ldots, \beta_k} \) with \( \alpha= \beta_0 \kleeneg \beta_1 \kleeneg \ldots \kleeneg \beta_k=\beta \) such that 
	\begin{equation}
		\beta \kleenel \gamma \iff \enumseq{\beta} <_L \enumseq{\gamma}  \label{lem:enum-sequence:lex}
	\end{equation}
\end{lemma}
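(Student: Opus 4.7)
The plan is to define $\enumseq{\beta}$ via an explicit effective descent procedure. Starting from $\beta_0 = \alpha$, at each stage $i$ with current notation $\beta_i \kleeneg \beta$, proceed as follows: if $\beta_i = \gamma + 1$ is a successor, set $\beta_{i+1} = \gamma$; if $\beta_i$ is a limit, set $\beta_{i+1} = \kleenelim{\beta_i}{n}$ for the least integer $n$ with $\kleenelim{\beta_i}{n} \kleenegeq \beta$. Halt when the current notation equals $\beta$ and output $\enumseq{\beta} = \langle \beta_0, \ldots, \beta_k \rangle$. Termination is immediate because the ordinals denoted by the $\beta_i$ strictly decrease, and $\beta_k$ must be $\beta$ since at every stage $\beta_i \kleenegeq \beta$ by construction.

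For computability, I would rely on the standard fact that $\kleeneleq$ restricted to the notations $\kleeneleq \alpha$ is a uniformly (in $\alpha$) decidable linear order, a consequence of the well-foundedness of this initial segment together with the $\Sigma^0_1$ character of $\kleeneleq$ on $\kleeneO$. Both $\kleenelim{\beta_i}{n} \kleeneleq \beta$ and its negation can then be confirmed in finite time, so at each limit stage we can effectively locate the least admissible $n$.

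To verify \eqref{lem:enum-sequence:lex}, suppose $\beta \kleenel \gamma$. If the descent aimed at $\beta$ passes through $\gamma$, then $\enumseq{\gamma}$ is a proper prefix of $\enumseq{\beta}$, so $\enumseq{\beta} \supsetneq \enumseq{\gamma}$ and thus $\enumseq{\beta} <_L \enumseq{\gamma}$. Otherwise let $l$ be the least position at which the sequences differ. By construction they share a common entry $\delta$ at position $l{-}1$, and since successor steps are deterministic, $\delta$ must be a limit. Then $\enumseq{\beta}(l) = \kleenelim{\delta}{n_\beta}$ and $\enumseq{\gamma}(l) = \kleenelim{\delta}{n_\gamma}$ for the least $n_\beta, n_\gamma$ meeting the respective target lower bounds. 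Since the sequence $n \mapsto \kleenelim{\delta}{n}$ is strictly $\kleenel$-increasing and $\beta \kleenel \gamma \kleeneleq \delta$, we have $n_\beta \leq n_\gamma$, and the divergence assumption forces $n_\beta < n_\gamma$. Hence $\enumseq{\beta}(l) \kleenel \enumseq{\gamma}(l)$, yielding $\enumseq{\beta} <_L \enumseq{\gamma}$. The converse follows by contraposition: the map $\beta \mapsto \enumseq{\beta}$ is injective (its last entry is $\beta$), $\kleeneleq$ is total on notations below $\alpha$, and $<_L$ is a linear order on the finite sequences in question.

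The main technical point to watch is the decidability claim in the second paragraph; aside from that, the argument is routine once one has the right recursive procedure. The slightly unusual convention in $<_L$ --- where proper extensions count as smaller --- happens to align precisely with the descent, because reaching a $\kleenel$-smaller target requires either strictly more steps (proper extension) or a $\kleenel$-smaller choice at a shared limit, both of which make the resulting sequence smaller under $<_L$.
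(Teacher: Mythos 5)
Your proof is correct, but the construction is genuinely different from the paper's and, in my view, cleaner. The paper builds the sequences by an \emph{enumeration} procedure: it tags each enumerated \( \enumseq{\beta} \) with a ``lower bound'' controlling which notations may subsequently appear as direct extensions, and it replaces each \( \kleenelim{\beta}{n} \) by the least notation \( \gamma \) with \( \gamma + m = \kleenelim{\beta}{n} \) (finite \( m \)) above that bound; the verification of \eqref{lem:enum-sequence:lex} then reads \( \gamma \kleeneg \beta' \kleenegeq \beta \) off the lower bound tagged to the first divergent entry. You instead define \( \enumseq{\beta} \) directly as the greedy descent from \( \alpha \), at each limit stage taking \( \kleenelim{\beta_i}{n} \) for the least \( n \) with \( \kleenelim{\beta_i}{n} \kleenegeq \beta \). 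This makes \( \beta \mapsto \enumseq{\beta} \) manifestly a total function whose last entry is \( \beta \), so uniqueness and termination are immediate rather than facts about an enumeration to be checked. The one step you assert without argument is that if the descent towards \( \beta \) passes through \( \gamma \) then \( \enumseq{\gamma} \) is a prefix of \( \enumseq{\beta} \); this deserves a sentence, and the reason is that the descent towards \( \beta \) is strictly \( \kleenel \)-decreasing, so every \( \beta_{i+1} \) with \( i+1 \leq j \) (where \( \beta_j = \gamma \)) satisfies \( \beta_{i+1} \kleenegeq \gamma \), forcing the minimizing index \( n \) at any earlier limit stage \( \beta_i \) (least \( n \) with \( \kleenelim{\beta_i}{n} \kleenegeq \beta \)) to coincide with the one chosen aiming at \( \gamma \). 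Your decidability remark is the standard fact that \( \kleenel \) restricted to \( \set{\beta}{\beta \kleeneleq \alpha} \) is a linear order with a \( \Sigma^0_1 \) graph and is fine. Since this lemma feeds Lemma~\ref{lem:nice-of-len-alpha}, it is also worth noting that your construction still supplies what is used there: for each limit \( \beta \) all but finitely many \( \kleenelim{\beta}{n} \) have descents agreeing with that of \( \beta \) through all of \( \enumseq{\beta} \), so the direct extensions of \( \enumseq{\beta} \) remain cofinal below \( \beta \).
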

\begin{proof}
We wish to let \( \enumseq{\beta} \) control what sequences \( \vec{\varepsilon} \supset \enumseq{\beta}   \) will be associated with some notation so we tag each \( \enumseq{\beta} \) with a lower bound for notations appearing in \( \vec{\varepsilon} \supset \enumseq{\beta} \) when enumerated.

We start by enumerating \( \enumseq{\alpha}=\str{\alpha} \) and assigning \( \enumseq{\alpha} \) the lower bound \( 0 \).  Suppose we have already enumerated some \( \enumseq{\beta}=\str{\ldots, \beta} \) with associated lower bound \( \lambda \) but have yet to enumerate any extension of \( \enumseq{\beta} \).  Here we search for the least \( n \) such that  \(\kleenelim{\beta}{n} \kleenegeq \lambda \).  Let \( \gamma \) be the least notations such that \( \gamma +m = \kleenelim{\beta}{n} \) for finite \( m \) and \( \gamma \kleenegeq \lambda \) and enumerate \( \enumseq{\gamma}=\enumseq{\beta}\concat[\gamma] \) with lower bound \( \lambda \).  Otherwise let \( \kappa \) be the maximal notation with \( \enumseq{\kappa}=\enumseq{\beta}\concat[\kappa] \) already enumerated.  Now search for the least \( n \) with \(\kleenelim{\beta}{n} \kleenegeq \kappa \) and let \( \gamma \) be the least notations such that \( \gamma +m = \kleenelim{\beta}{n} \) for finite \( m \) and \( \gamma \kleeneg \kappa \) and enumerate \( \enumseq{\gamma}=\enumseq{\beta}\concat[\gamma] \) with lower bound \( \kappa + 1 \).

The construction clearly enumerates a sequence for every notation \( \beta \kleeneleq \alpha \).  Now assume that \( \enumseq{\beta} <_L \enumseq{\gamma} \).  Let \( \vec{\varepsilon}=\str{\ldots,\lambda} \) be the longest common initial segment of \( \enumseq{\beta}, \enumseq{\gamma} \) and \( \beta', \gamma' \) such that \( \vec{\varepsilon}\concat[\beta'] \subset \enumseq{\beta}  \) and  \( \vec{\varepsilon}\concat[\gamma'] \subset \enumseq{\gamma}  \).  During enumeration \( \vec{\varepsilon}\concat[\gamma'] \) would have been tagged with a lower bound of at least \( \beta' +1 \).  Hence \( \gamma \kleeneg \beta'  \) but \( \beta'  \kleenegeq \beta \).  Hence \( \beta \kleenel \gamma \).  To observe the other direction note that if \( \beta \kleenel \gamma \) either \( \enumseq{\beta} <_L \enumseq{\gamma} \) or \( \enumseq{\gamma} <_L \enumseq{\beta} \) but the later possibility would entail that \( \gamma \kleenel \beta \) so \( \enumseq{\beta} <_L \enumseq{\gamma} \). 
\end{proof}

\begin{lemma}\label{lem:nice-of-len-alpha}
	Given a notation \( \alpha \) we can effectively produce a nice notation \( \alpha' \) for the same ordinal.
\end{lemma}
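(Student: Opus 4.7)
The plan is to leverage Lemma \ref{lem:enum-sequence} as a blueprint. First, apply that lemma to $\alpha$ to obtain, uniformly in $\alpha$, the sequences $\enumseq{\beta}$ for every $\beta \kleeneleq \alpha$. These equip the predecessors of $\alpha$ with a tree structure in which each $\beta \kleenel \alpha$ has a unique parent (the penultimate entry of $\enumseq{\beta}$), and by \eqref{lem:enum-sequence:lex} the lexicographic order on sequences coincides with the $\kleenel$-order.

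Second, I would use the recursion theorem (in the style of Lemma \ref{lem:tower-homeomorphism}) to define an effective map $f$ on $\set{\beta}{\beta \kleeneleq \alpha}$ satisfying the inductive hypothesis that $f(\beta)$ is a notation for the ordinal denoted by $\beta$ and $\set{f(\delta)}{\delta \kleeneleq \beta}$ is nice. Set $f(0)=0$ and $f(\delta+1)=f(\delta)+1$. For $\beta$ a limit with tree-children $\gamma_0 \kleenel \gamma_1 \kleenel \cdots$ (infinite in number and cofinal below the ordinal of $\beta$), build the fundamental sequence of $f(\beta)$ inductively in $n$: set $\kleenelim{f(\beta)}{0}=f(\gamma_0)$, and for $n>0$ let $\kleenelim{f(\beta)}{n}$ be a limit notation for the ordinal of $\gamma_n$ whose own fundamental sequence begins $\kleenelim{f(\beta)}{n-1}, \kleenelim{f(\beta)}{n-1}+1, \kleenelim{f(\beta)}{n-1}+2, \ldots$ and then, via a further nested construction when the ordinal gap exceeds $\omega$, threads through a cofinal sequence inside $f(\gamma_n)$. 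The nesting forces $\kleenelim{f(\beta)}{n-1} \kleenel \kleenelim{f(\beta)}{n}$, so $\kleenel$ is linear on the predecessors of $f(\beta)$.

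Third, I would verify the clauses of Definition \ref{def:nice-ordinal}. Linearity follows from the nesting. Condition \ref{def:nice-ordinal:min} holds because every $\kleeneO$-predecessor of $f(\beta)$ lies in a unique nested block, yielding unique minimal paths. Condition \ref{def:nice-ordinal:between} holds because each notation below $\beta$ appears in the nested fundamental sequence of some $\kleenelim{f(\beta)}{n}$. Condition \ref{def:nice-ordinal:finite-sequence} requires a careful pairing argument: each application of the parent function on a notation in the image of $f$ either strictly decreases the length of the associated $\enumseq$-sequence or strictly decreases a secondary index bookkeeping depth inside a single stage's nested fundamental sequence; since both are natural numbers, infinite parent-chains are impossible.

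The principal obstacle is the limit-stage construction: one must recursively build $\kleenelim{f(\beta)}{n}$ so that it is a valid $\kleeneO$ notation for the correct ordinal while threading the previously constructed $\kleenelim{f(\beta)}{n-1}$ into its $\kleenel$-predecessors. When the ordinal gap between $\kleenelim{f(\beta)}{n-1}$ and $\gamma_n$ exceeds $\omega$ (as it typically will), a secondary nested construction of limit notations is needed to bridge that gap. Managing this bridging effectively in $\alpha$, coordinated with the recursion theorem used to produce indices for $f$, is the technical heart of the argument; once this is arranged the verifications sketched above proceed routinely.
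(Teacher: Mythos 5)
Your proposal starts on the right foot: apply Lemma \ref{lem:enum-sequence} to obtain the tree structure on notations below \(\alpha\), then build \(\alpha'\) by transfinite recursion, sending successors to successors and using the tree-children at limit stages. That is exactly the decomposition the paper uses. But you then take a wrong turn at the limit stage, and the wrong turn both derails the proof and is itself the symptom of a misconception worth fixing.

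You worry that the fundamental sequence of \(f(\beta)\), namely \(f(\gamma_0), f(\gamma_1), \ldots\), will not satisfy \(f(\gamma_{n-1}) \kleenel f(\gamma_n)\), and you try to \emph{manufacture} this by discarding \(f(\gamma_n)\) and instead cooking up a fresh limit notation for the same ordinal that explicitly threads \(\kleenelim{f(\beta)}{n-1}\) into its own fundamental sequence. This threading is unnecessary: \(\kleenel\)-comparability of the \(f(\gamma_i)\) comes for free from the recursion itself. Proving that \(f\) preserves \(\kleenel\) is a routine simultaneous induction on the ordinal of \(\beta\): at a successor stage it is immediate, and at a limit stage, if \(\gamma \kleenel \beta\) then by \eqref{lem:enum-sequence:lex} and the cofinality of the tree-children one has \(\gamma \kleeneleq \gamma_i\) for some \(i\), whence \(f(\gamma) \kleeneleq f(\gamma_i) \kleenel f(\beta)\) by the inductive hypothesis. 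In particular the same argument applied to \(\gamma = \gamma_{n-1}\) and \(\beta = \gamma_n\) gives \(f(\gamma_{n-1}) \kleenel f(\gamma_n)\) directly, so one can simply take \(f(\beta)\) to be a notation for the limit of the sequence \(f(\gamma_0), f(\gamma_1), \ldots\) with no bridging. The paper does exactly this, and the verification of all three clauses of Definition \ref{def:nice-ordinal} then reads off the structure of \(\enumseq{\cdot}\): uniqueness and minimality of paths because \(\lambda\) must appear in \(\enumseq{\beta}\), clause \ref{def:nice-ordinal:between} because \(\enumseq{\lambda}\subseteq\enumseq{\beta}\) for \(\beta\) between consecutive fundamental-sequence elements of \(\lambda\), and clause \ref{def:nice-ordinal:finite-sequence} because each \(\enumseq{\beta}\) is a finite string.

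Beyond being unnecessary, your threading construction is actively harmful. Once \(\kleenelim{f(\beta)}{n}\) for \(n > 0\) is a freshly minted limit notation rather than \(f(\gamma_n)\), the \(\kleenel\)-predecessors of \(f(\beta)\) are no longer exactly the images \(f(\delta)\) for \(\delta \kleenel \beta\); they include new notations that do not correspond to anything in the original tree. That destroys the inductive invariant you would need to verify linearity and clauses \ref{def:nice-ordinal:min} and \ref{def:nice-ordinal:between} of niceness, and it is why your sketch of clause \ref{def:nice-ordinal:finite-sequence} requires an extra bookkeeping index that the paper's proof dispenses with in one sentence. The fix is to trust the recursion: drop the bridging entirely, prove that \(f\) is \(\kleenel\)-preserving by the induction sketched above, and let the fundamental sequence of \(f(\beta)\) simply be the images of the tree-children.
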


\begin{proof}
	Given \( \beta \kleeneleq \alpha \) construct the sequence \( \enumseq{\beta} \) as by the prior lemma.  By transfinite recursion define the notation \( \hat{\beta} \) to be the successor of \( \hat{\gamma} \) if \( \beta \) is the successor of \( \gamma \) and the limit of \( \hat{\gamma_i} \) where \( \enumseq{\gamma_i}=\enumseq{\beta}\concat{\gamma_i} \) which is effective by the above construction.  Now if \( \kappa \kleenel \beta \) then \( \enumseq{\kappa} <_L \enumseq{\beta} \) and hence for some \( \gamma_i \)  \( \enumseq{\kappa} \leq_L \enumseq{\beta}\concat[\gamma_i]  \) so \( \kappa \kleeneleq \gamma_i \).  Thus, \( \lim_{i \to \infty} \gamma_i = \beta \).  Moreover, note that there is a path from \( \lambda' \) to \( \beta' \) only if \( \lambda \) appears in \( \enumseq{\beta} \) and that path is unique and minimal by construction satisfying part \ref{def:nice-ordinal:min} of the definition.  If \( \kleenelim{\lambda'}{n} \kleenel \beta \kleeneleq \kleenelim{\lambda'}{n+1} \) then \( \enumseq{\kleenelim{\lambda}{n}} <_L \enumseq{\beta} \leq_L \enumseq{\kleenelim{\lambda}{n+1}} \) so \( \enumseq{\lambda} \subseteq \enumseq{\beta} \) ensuring that part \ref{def:nice-ordinal:between} of the definition is satisfied.  Finally part \ref{def:nice-ordinal:finite-sequence} is trivially satisfied as for each \( \beta \) \( \enumseq{\beta} \) is a finite string. 
\end{proof}

\begin{lemma}\label{lem:nice-path}
There is a nice set \( I \) forming a path through \( \kleeneO \)
\end{lemma}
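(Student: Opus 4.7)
The plan is to construct the nice path \( I \) through \( \kleeneO \) as an increasing union \( I = \bigcup_{n \in \omega} I_n \) of nice linearly-ordered initial segments, by repeatedly adjoining a fresh block of nice notations via effective ordinal addition in \( \kleeneO \).  Fix any increasing cofinal sequence of ordinals \( \xi_n \to \wck \) (for instance, obtained from a \( \Pi^{1}_{1} \) path through \( \kleeneO \)) and set \( \mu_0 = 0 \), \( I_0 = \{0\} \).  At stage \( n+1 \), given \( \mu_n \) with nice initial segment \( I_n = \set{\beta}{\beta \kleeneleq \mu_n} \), I produce via a modified version of lemma \ref{lem:nice-of-len-alpha} a nice notation \( \delta_n \) denoting an ordinal large enough that \( |\mu_n| + |\delta_n| \geq \xi_{n+1} \), then set \( \mu_{n+1} = \mu_n + \delta_n \) using effective addition in \( \kleeneO \), with \( I_{n+1} = \set{\beta}{\beta \kleeneleq \mu_{n+1}} \).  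Each \( I_{n+1} \) remains linearly ordered as an initial segment because the \( <_L \) linearization built into lemma \ref{lem:enum-sequence} guarantees the predecessor set of any notation produced by lemma \ref{lem:nice-of-len-alpha} is a linear order.

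The critical design requirement is that \( \delta_n \) be built so that the notation \( 0 \) is not path-accessible within its nice structure; equivalently, no limit notation \( \kleeneleq \delta_n \) should have \( 0 \) in its effective-limit sequence.  I will achieve this by running the enumeration procedure of lemma \ref{lem:enum-sequence} with an initial lower-bound strictly above \( 0 \) (above some fixed threshold notation), discarding the \( \enumseq \)-sequences for small notations.  Niceness property \ref{def:nice-ordinal:between} only demands that paths exist above \( \kleenelim{\cdot}{0} \), so this truncation is legal provided I arrange \( \kleenelim{\delta_n}{0} \) to sit above the threshold.

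With \( \delta_n \) so constructed, niceness of each \( I_{n+1} \) follows by combining the inductive niceness of \( I_n \) with the niceness of \( \delta_n \).  Property \ref{def:nice-ordinal:min} (uniqueness of paths) holds because every path from \( \mu_{n+1} \) either stays within the \( \delta_n \)-block—where uniqueness is by the niceness of \( \delta_n \)—or descends through the unique bottom-route to \( \mu_n \) and continues in \( I_n \), where uniqueness is inductive.  Property \ref{def:nice-ordinal:between} is inherited since all paths required in the interval \( [\kleenelim{\mu_{n+1}}{0}, \mu_{n+1}) \) live entirely within the \( \delta_n \)-block.  Crucially, property \ref{def:nice-ordinal:finite-sequence} holds because the truncation ensures \( \mu_n = \mu_n + 0 \) is not in the effective limit of any limit in the \( \delta_n \)-block, so no \( \enumpred \)-chain crosses stage boundaries; every \( \enumpred \)-chain in \( I \) is therefore confined to a single \( I_n \) or \( \delta_n \)-block and hence finite.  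Cofinality of \( I \) in \( \wck \) is immediate from \( \xi_n \to \wck \).

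The main obstacle I anticipate is establishing the truncated variant of lemma \ref{lem:nice-of-len-alpha} with the no-path-to-\( 0 \) property.  This requires rerunning the enumeration argument of lemma \ref{lem:enum-sequence} with a high initial lower-bound, verifying that the resulting notation still denotes the intended ordinal, that its predecessor set remains linearly ordered, and that the three niceness conditions all survive in their truncated form (in particular, that the restricted \( \enumseq \)-sequences still form a coherent finite tree respecting the \( <_L \) ordering).  Once this variant is in place, the verification that the union inherits niceness and cofinality is a routine bookkeeping exercise using the layering of the \( \delta_n \)-blocks.
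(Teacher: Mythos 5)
Your overall strategy agrees with the paper's: stack nice blocks end-to-end via effective ordinal addition along a cofinal sequence in a path through \( \kleeneO \), verify that the blocks do not leak paths into one another, and take the union. The difference is entirely in how block separation is enforced, and here the paper's move is substantially lighter. The paper sets \( \kappa_{i+1}=\kappa_i+1+\alpha'_i \). Because effective addition sends the effective-limit sequence of a limit \( \lambda\kleeneleq\alpha'_i \) to \( \kappa_i+1+\kleenelim{\lambda}{n} \), even a \( 0 \) in the effective limit of \( \alpha'_i \) lands on \( \kappa_i+1\kleeneg\kappa_i \), so no descending path from the new block can reach \( \kappa_i \) or anything below it. The \( +1 \) acts as an arithmetic barrier and requires no modification whatsoever to lemmas \ref{lem:enum-sequence} or \ref{lem:nice-of-len-alpha}.

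You instead set \( \mu_{n+1}=\mu_n+\delta_n \) without the \( +1 \), and shift the burden to the nicification: you ask that \( \delta_n \) be built so that \( 0 \) never appears in any effective limit below \( \delta_n \), proposing to rerun lemma \ref{lem:enum-sequence} with a raised initial lower bound. You flag this as the main obstacle, and it genuinely is: the truncated enumeration no longer terminates on all \( \beta\kleeneleq\alpha \), so you must verify the transfinite recursion defining \( \hat{\beta} \) still bottoms out — and there is a real pitfall. If the raised threshold is itself a limit notation \( \lambda_0 \), it can be enumerated into the tree with associated lower bound \( \lambda_0 \), in which case \( \lambda_0 \) has no tree children at all (its effective-limit entries are all \( \kleenel\lambda_0 \)) and the limit clause of the recursion has nothing to take a limit of. The threshold must therefore be a finite positive notation, and even then one should check that every successor child's predecessor chain reaches a tree node or \( 0 \). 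None of this is insuperable, but it is work the proposal leaves undone, while the paper's \( +1 \) insertion dissolves the entire issue in one line.
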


\begin{proof}
Fix a unique path \( \kleeneO* \) through \( \kleeneO \) and let \( \alpha_i \) be an increasing cofinal sequence in \( \kleeneO* \).  Define \( \kappa_0=\alpha'_0 \) and \( \kappa_{i+1}=\kappa_i + 1 + \alpha'_{i} \).  By the definition of the effective addition operation on notations there is no \( \lambda \kleenel \kappa_{i+1}  \) with \( \lambda \kleenegeq \kappa_i +1 \) and \( \beta \kleeneleq \kappa_i \) connected by a path but the set of ordinals therefore it follows from the fact that  \( \alpha'_i \) and \( \kappa_i \) are nice that \( \kappa_{i+1} \) is nice.  Let \( I = \set{\beta}{\exists({i})\left(\beta \kleenel \kappa_i \right)} \).
\end{proof}

To simplify our notation slightly in the main body of the paper we've made use of the fact that if \( \enumpred{\beta} \) is undefined then \( \copylen{\beta+1}=0 \) so we may safely set \( \enumpred{\beta}=\enumpred{(\beta+1)} \) for any \( \beta \) on which \( \enumpred{\beta} \) is undefined and by lemma \ref{lem:build-copylen} can be done without imperiling the computability of \( \copylen{\beta} \).    Note our construction of our nice path through \( \kleeneO \) provides a \( \Pi^1_1 \) set of notations \( \alpha_i \) cofinal in \( \kleeneO \) such that the computations giving \( \enumpred{\beta} \) and \( \copylen{\beta} \) for every \( \beta \kleeneleq \alpha_i \) can be uniformly computed from \( \alpha_i \).  All constructions performed in the main body of the paper can be taken to use ordinals that lie along this \( \Pi^1_1 \) path.

\end{document}